\newcommand{\RR}{{\mathbb R}}
\newcommand{\CC}{{\mathbb C}}
\newcommand{\N}{\mathbb{N}}
\newcommand{\PP}{\mathbb{P}}
\newcommand{\cri}{\mathbf{Crit}}
\newcommand{\ve}{\mathrm{vec}}
\newcommand{\res}{\mathrm{Res}}
\newcommand{\mc}{\mathscr{C}}
\newcommand{\diag}{\mathrm{diag}}
\newcommand{\td}[1]{\tilde{#1}}
\newcommand{\rank}{\mbox{\upshape rank}}
\newcommand{\wt}[1]{\widetilde{#1}}
\newcommand{\co}{\text{\upshape co}}
\newtheorem {theorem}{Theorem}[section]
\newtheorem {corollary}{Corollary}[section]
\newtheorem {lemma}{Lemma}[section]
\newtheorem {example}{Example}[section]
\newtheorem {definition}{Definition}[section]
\newtheorem {remark}{Remark}[section]
\newtheorem {proposition}{Proposition}[section]
\def\EES{{\accent"5E e}\kern-.5em\raise.8ex\hbox{\char'23 }}
\def\ow{o\kern-.42em\raise.82ex\hbox{
   \vrule width .12em height .0ex depth .075ex \kern-0.16em \char'56}\kern-.07em}
\def\OW{o\kern-.460em\raise1.36ex\hbox{
\vrule width .13em height .0ex depth .075ex \kern-0.16em
\char'56}\kern-.07em}
\def\DD{D\kern-.7em\raise0.4ex\hbox{\char '55}\kern.33em}
\title{On continuous selections of polynomial functions}
\author{Feng Guo$^*$}
\address[Feng Guo]{School of Mathematical Sciences, Dalian  University of Technology, Dalian, 116024, China}
\email{fguo@dlut.edu.cn}
\thanks{$^*$Corresponding Author.}
\author{Liguo Jiao}
\address[Liguo Jiao]{School of Mathematical Sciences, Soochow University, Suzhou 215006, Jiangsu Province, China}
\email{hanchezi@163.com}
\author{Do Sang Kim} 
\address[Do Sang Kim]{Department of Applied Mathematics, Pukyong National University, Busan, 48513, Korea}
\email{dskim@pknu.ac.kr}
\date{ \today}
\begin{document}

\begin{abstract} 
	A continuous selection of polynomial functions is a continuous
	function whose domain can be partitioned into finitely many pieces
	on which the function coincides with a polynomial. Given a set of
	finitely many polynomials, we show that there are only finitely
	many continuous selections of it and each one is semi-algebraic.
	Then, we establish some generic properties regarding the critical
	points, defined by the Clarke subdifferential, of these continuous
	selections. In particular, given a set of finitely many
	polynomials with generic coefficients, we show that the critical
	points of all continuous selections of it are finite and the
	critical values are all different, and we also derive the
	coercivity of those continuous selections which are bounded from
	below. We point out that some existing results about
	{\L}ojasiewicz's inequality and error bounds for the maximum
	function of some finitely many polynomials are also valid for all
	the continuous selections of them. 
\end{abstract}

\keywords{continuous selections, polynomial functions, critical
points, generic properties}
\maketitle

\section{Introduction}
In this paper, we mainly study the set of all {\itshape continuous
selections} of some given finitely many multivariate polynomials with
real coefficients. A function is in such a set if it is continuous
and its value at every point is equal to one of the values of those
polynomials at that point. We say the set of the given polynomials the
set of {\itshape selection functions}. This kind of functions belongs to a larger
one called $PC^l$ function which is everywhere locally a continuous
selection of $C^l$-functions (i.e., $l$-times continuously differentiable
functions). The set of $PC^l$ functions covers
various types of functions composed by the selection functions, among
which are the typical examples of the maximum and minimum functions
occurring in optimization. Moreover,
superposition, scalar multiples and finite sums of $PC^l$ functions
are again $PC^l$ functions. In particular, $PC^1$ functions are also called
{\itshape piecewise differentiable} functions.

$PC^l$ functions have many
applications to solution methodology in optimization, particularly in
connection with complementarity problems and variational inequalities.
Jongen and Pallaschke \cite{JP1988} introduced the notion of
continuous selections of differentiable functions to extend the
classical critical point theory to nonsmooth functions. 
Continuous selections of locally Lipschitz continuous functions have
been studied in Hager's paper \cite{Hager1979}.  As a continuous
selection of locally Lipschitz continuous functions is again locally
Lipschitz (c.f. \cite[Corollary 4.1.1]{ScholtesBook}), its critical
point can be defined naturally via the Clarke subdifferential \cite{Clarke1990}.
Womersley \cite{Womersley1982} investigated the optimality conditions
on critical points for piecewise differentiable functions. The
connection between piecewise differentiable functions and nonsmooth
optimization problems is extensively studied in \cite{CHANEY1990649}.
The representations of continuous selections of affine functions and
the topological classification of continuous selections of linear
functions are investigated in \cite{BARTELS1995385}. 
Qualitative aspects of the second order approximation scheme for
regular $PC^2$-functions are considered in \cite{KUNTZ1995197}. 
We refer the readers to
\cite{APS1997,PD1996,DS1997,R2003,ScholtesBook} and the recent
\cite{Cui2018-2, Cui2018-1} for more basic background and developments
in this subject.

In this paper, we restrict our attention to the set of continuous
selections of {\it polynomial} functions and we call such a continuous
selection a CSP function for
short. What we benefit from this restriction are the applications of
deep theory and powerful tools from semi-algebraic geometry to derive
many favorable properties enjoyed by CSP functions. In particular, we
show that there are only finitely many CSP functions
selected from a given set of finitely many polynomials and each one is
semi-algebraic. As a semi-algebraic function, the set of isolated
local minimizers of a CSP function coincides with its set of strictly
local minimizers and both are finite. Obviously, it is not the case for
continuous selections of general functions. 

Over the past few decades, generic properties for
mathematical programming problems have been extensively studied in the
literature, see  
\cite{AHO1997,BDL,DIL,FO1982,LeePham2016,LeePham2017,NieFiniteCon,PT2001,SS1973,SA1997,SR1979}.
The first order necessary optimality condition for a CSP function
states that a local minimizer must be a critical point, i.e., $0$
belongs to the Clarke subdifferential of the CSP function at this
point (c.f. \cite[Theorem 3.1]{Womersley1982}). 
Inspired by the papers \cite{LeePham2016,LeePham2017} by Lee
and Ph\d{a}m where the genericity of semi-algebraic programs is
investigated, we next establish some generic properties concerned with
the set of critical points of all CSP functions with the same set of 
selection functions. Here, the term ``genericity'' means that the
properties hold in the following sense. If we fix the number $r$ and a
degree bound $d$ of the polynomials in the set of selection functions, we
can identify the set of selection functions with the vector of all
coefficients of the $r$ ordered polynomials in the canonical monomial
basis of the space of polynomials
of degree up to $d$. Then, there exists an open and dense
semi-algebraic subset of the vector space such that for each set of
selection functions corresponding to a vector in this subset, these
properties hold for all CSP functions selected from it. In particular,
we obtain the following generic properties for all CSP functions
selected from the same set of finitely many polynomials:
(i) the critical points of all those CSP functions are finite and the
corresponding critical values are all distinct; 
(ii) each of those CSP functions is ``good at infinity'' in
the sense that its non-smooth slope, defined by the Clarke
subdifferential, at a point is not smaller than a
positive constant $c$ whenever the Euclidean norm of the point is
larger than a constant $R$; (iii) each of those CSP functions which are
bounded from below is coercive and hence its global minimum is
attainable at a unique minimizer.

An error bound for a subset of an Euclidean space is an inequality
that bounds the distance from an arbitrary point in a test set to the
subset in terms of the amount of ``constraint violation'' at that point.
Among the numerous applications of error bounds, they can be used to
estimate the rate of convergence of many optimization methods. 
We refer the readers to \cite{Pang1997} for an excellent survey in
this subject and to the more recent papers
\cite{Aze,BNTPS,DHP2017,FHKO,Kruger2015,Li2013,LMNP,LMP2015,NZ2001} with the
bibliographies therein.
In the papers \cite{DHP2017,LMP2015}, a non-smooth {\L}ojasiewicz's
inequality about the non-smooth slope, defined by the limiting
subdifferential, is established for the maximum function of finitely
many polynomials. Then, some local and global H{\"o}lderian error
bounds with explicit exponents for a polynomial system are obtained.
Note that the maximum function of finitely many polynomials is an
instance of CSP functions selected from these polynomials. Moreover,
the non-smooth slope for the maximum function defined via limiting
subdifferential and the non-smooth slope for any CSP function
defined via Clarke subdifferential have the same representation (see
Section \ref{sec::errorbound}). As a
result, we point out that some results obtained in
\cite{DHP2017,LMP2015} about non-smooth
{\L}ojasiewicz's inequality and error bounds for the maximum function
of finitely many polynomials are also valid for any CSP functions
selected from them. 

\vskip 5pt
The paper is organized as follows. In Section \ref{sec::pre}, we
introduce some notation and preliminaries used in the paper. We
present some basic properties satisfied by all CSP functions in
Section \ref{sec::CSS}. Some
generic properties for CSP functions are establised in Section
\ref{sec::genericity}. In Section \ref{sec::errorbound}, we discuss
some results about non-smooth {\L}ojasiewicz's inequality and error
bounds for CSP functions. In Section \ref{sec::con}, some
conclusions are given.

\section{Preliminaries}\label{sec::pre}

We use the following notation and terminology.
The symbol $\RR$ (resp. $\CC$, $\N$) denotes the set of real (resp.
complex, natural) numbers.
We denote by $\RR_{>0}$ the set of positive real numbers.
$\RR[x]=\RR[x_1,\ldots,x_n]$ denotes the ring of polynomials in variables
$x=(x_1,\ldots,x_n)$ with real coefficients. 
The Euclidean space $\mathbb{R}^n$ is equipped with the usual scalar
product $\langle \cdot, \cdot \rangle$ and the corresponding Euclidean
norm $\|\cdot\|.$ 
For any set $J$, we denote by $\# J$ the cardinality of $J$.
The notation $C^p$ means $p$-times continuously differentiable;
$C^\infty$ is infinitely continuously differentiable.
In what follows, $\RR^n$ will always be considered with its Euclidean
topology, unless stated otherwise.
For a subset $S \subset \mathbb{R}^n$, the closure and convex hull of $S$ in $\RR^n$
are denoted by $\bar{S}$ and $\co\, S,$ respectively. 
Denote by $\mathbb{B}$ (resp., $\bar{\mathbb{B}}$) the unit (resp.,
closed) ball centered at the origin in $\RR^n$. For any
$\alpha\in\N^n$, denote $|\alpha|:=\alpha_1+\cdots+\alpha_n$. 

\subsection{Semi-algebraic geometry}
Let us recall some notion and results from semi-algebraic geometry (see,
for example, \cite{RASS, Bochnak1998}).

\begin{definition}{\rm
\begin{enumerate}
  \item[(i)] A subset of $\mathbb{R}^n$ is said to be {\em semi-algebraic} if
	  it is a finite union of sets of the form
$$\{x \in \mathbb{R}^n \colon f_i(x) = 0, i = 1, \ldots, k;\ f_i(x) > 0,
i = k + 1, \ldots, p\},$$
where all $f_{i}$'s are in $\RR[x]$.
 \item[(ii)]
Let $A \subset \Bbb{R}^n$ and $B \subset \Bbb{R}^m$ be semi-algebraic
sets. A map $F \colon A \to B$ is said to be {\em semi-algebraic} if
its graph
$$\{(x, y) \in A \times B \colon y = F(x)\}$$
is a semi-algebraic subset in $\Bbb{R}^n\times\Bbb{R}^m.$
\end{enumerate}
}\end{definition}

%

Note that semi-algebraic sets and functions enjoy a number of
remarkable properties. We summarize some of the important properties
which will be used in the sequel.

\begin{proposition}\label{property of SA}
The following statements hold$:$
\begin{itemize}
\item[{\rm (i)}] Each semi-algebraic set in $\mathbb{R}$ is a finite union of intervals and points.
\item[{\rm (ii)}] Finite union $($resp.$,$ intersection$)$ of semi-algebraic sets is semi-algebraic.
\item[{\rm (iii)}]The Cartesian product $($resp.$,$ complement$,$ closure$,$ interior$)$ of semi-algebraic sets is semi-algebraic.
\item[{\rm (iv)}] If $f,$ $g$ are semi-algebraic functions on $\mathbb{R}^n$ and $\lambda\in\mathbb{R},$ then $f+g,$ $fg$ and $\lambda f$ are all semi-algebraic functions.
\item[{\rm (v)}] If $f$ is a semi-algebraic function on $\mathbb{R}^n$ and $\lambda\in\mathbb{R},$ then $\{x\in \mathbb{R}^n : f(x)\le\lambda\},$ $\{x\in \mathbb{R}^n : f(x)<\lambda\}$ and $\{x\in \mathbb{R}^n : f(x)=\lambda\}$ are all semi-algebraic sets.
\item[{\rm (vi)}] A composition of semi-algebraic maps is a semi-algebraic map.
\end{itemize}
\end{proposition}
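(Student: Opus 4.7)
The plan is to handle each item in turn, relying either on the definition directly or on the Tarski--Seidenberg projection theorem (the image of a semi-algebraic set under a linear projection is semi-algebraic), which is the central tool of the subject and which the paper implicitly uses by citing \cite{RASS, Bochnak1998}. Since these are classical facts, I anticipate the authors to state the proposition without proof; nevertheless, here is the route I would follow.

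For (i), I would take $S\subset\RR$ defined by finitely many sign conditions on univariate polynomials $f_1,\ldots,f_N\in\RR[x]$. Each $f_i$ has finitely many real roots, so together these roots partition $\RR$ into finitely many open intervals on which every $f_i$ has constant sign. On each interval the defining conditions are either uniformly satisfied or uniformly violated, so $S$ is a finite union of such intervals together with some of the subdividing points. For (ii), the union case is immediate from the definition; for intersection one uses distributivity to reduce to intersecting two basic semi-algebraic sets of the form $\{f_i=0,\, g_j>0\}$, whose intersection is again a basic semi-algebraic set defined by the concatenated polynomial system.

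For (iii), the Cartesian product follows because polynomials in $x$ and polynomials in $y$ are polynomials in $(x,y)$. For the complement, which is the only delicate piece at this elementary level, I would use De Morgan's laws to reduce to complements of basic semi-algebraic sets and then note that $\{f=0\}^c=\{f>0\}\cup\{-f>0\}$ and $\{f>0\}^c=\{f=0\}\cup\{-f>0\}$; finite unions, intersections, and sign flips keep us inside the semi-algebraic world by (ii). Closure and interior would then be obtained by quantifier elimination over the reals: the closure is $\{x:\forall\varepsilon>0,\,\exists y\in S,\,\|x-y\|^2<\varepsilon\}$, which is a first-order formula in the language of ordered fields and hence semi-algebraic by Tarski--Seidenberg; the interior follows from closure and complement.

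For (iv)--(vi), I would work through graphs. The graph of $f+g$ is the projection of $\{(x,t,s,u):(x,t)\in\mathrm{graph}(f),\,(x,s)\in\mathrm{graph}(g),\,u=t+s\}$ onto $(x,u)$, which is semi-algebraic by (ii), (iii), and Tarski--Seidenberg; the same device handles $fg$ and $\lambda f$. For (v), the sets $\{f\le\lambda\}$, $\{f<\lambda\}$, $\{f=\lambda\}$ are projections onto $\RR^n$ of the intersections of $\mathrm{graph}(f)$ with the semi-algebraic slabs $\{(x,y):y\le\lambda\}$, $\{(x,y):y<\lambda\}$, $\{(x,y):y=\lambda\}$. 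For (vi), the graph of $g\circ f$ is the projection of $(\mathrm{graph}(f)\times\RR^k)\cap(\RR^n\times\mathrm{graph}(g))$ onto the first and last coordinates. The main obstacle throughout is in fact the Tarski--Seidenberg theorem itself, whose proof is nontrivial; once that is granted (as it is in \cite{RASS, Bochnak1998}), every item above reduces to a short formal manipulation.
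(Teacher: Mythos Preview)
Your anticipation is correct: the paper states this proposition without proof, treating it as a summary of standard facts from semi-algebraic geometry with a blanket citation to \cite{RASS, Bochnak1998}. Your proof sketch is correct and follows exactly the classical route found in those references, so there is nothing to compare.
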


\begin{theorem}[Tarski--Seidenberg Theorem] \label{Tarski Seidenberg Theorem}
The image of a semi-algebraic set by a semi-algebraic map is semi-algebraic.
\end{theorem}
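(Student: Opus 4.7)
The plan is to reduce the statement to a single clean projection problem and then prove that projections of semi-algebraic sets are semi-algebraic.

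First I would observe that if $F: A \to B$ is a semi-algebraic map with $A \subset \mathbb{R}^n$ and $B \subset \mathbb{R}^m$, then its graph $\Gamma_F = \{(x,y)\in A\times B : y = F(x)\}$ is semi-algebraic in $\mathbb{R}^n \times \mathbb{R}^m$ by hypothesis. Since $F(A) = \pi(\Gamma_F)$ where $\pi : \mathbb{R}^n \times \mathbb{R}^m \to \mathbb{R}^m$ is the coordinate projection onto the second factor, it suffices to show that coordinate projections send semi-algebraic sets to semi-algebraic sets. Writing $\pi$ as the composition of $n$ projections each dropping one coordinate, and using Proposition~\ref{property of SA}(vi) style composition, everything reduces to the single statement: \emph{for every semi-algebraic set $S \subset \mathbb{R}^{k+1}$, the projection $\pi_k(S) \subset \mathbb{R}^k$ obtained by forgetting the last coordinate is semi-algebraic.}

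Next I would set this up at the level of defining formulas. By the definition of semi-algebraic set, $S$ is a finite Boolean combination of sets $\{p_i(x,t) \; * \; 0\}$ where $x\in\mathbb{R}^k$, $t\in\mathbb{R}$, each $p_i \in \mathbb{R}[x,t]$, and $* \in \{=, >\}$. Thus $\pi_k(S) = \{x \in \mathbb{R}^k : \exists\, t\in\mathbb{R} \text{ satisfying the sign conditions defining } S\}$. The heart of the proof is then the following elimination claim: given polynomials $p_1,\ldots,p_N \in \mathbb{R}[x,t]$, for each sign pattern $\sigma \in \{-,0,+\}^N$ the set
\[
E_\sigma = \bigl\{x \in \mathbb{R}^k : \exists\, t \in \mathbb{R} \text{ with } \mathrm{sign}(p_i(x,t)) = \sigma_i \text{ for all } i\bigr\}
\]
is semi-algebraic in $\mathbb{R}^k$. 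Granting this, $\pi_k(S)$ is a finite union of such $E_\sigma$ and we are done.

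To establish the elimination claim I would treat the $p_i(x,t)$ as polynomials in $t$ with coefficients in $\mathbb{R}[x]$. For each fixed $x$, the real line decomposes into finitely many intervals separated by the real roots of $\prod_i p_i(\cdot,t)$, and on each interval every $p_i$ has constant sign. The existence of a $t$ realizing the sign pattern $\sigma$ can therefore be expressed in terms of (i) the number of real roots of certain polynomials in $t$ lying in prescribed intervals and (ii) the signs of the $p_i$ at and between these roots. The classical way to encode this uniformly in $x$ is via subresultant sequences (or Sturm--Sylvester sign counts), whose entries are polynomials in the coefficients of the $p_i$, hence polynomials in $x$; the corresponding sign conditions on these subresultants are exactly the required polynomial description of $E_\sigma$.

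The main obstacle is precisely this last step: proving cleanly that the condition ``for this $x$, some $t$ realizes sign pattern $\sigma$ on $p_1(x,t),\ldots,p_N(x,t)$'' is captured by a finite Boolean combination of polynomial sign conditions in $x$ alone, uniformly over all $x$. Handling this carefully amounts to a quantifier elimination argument; one can either invoke the Tarski--Seidenberg principle in its algebraic (model-theoretic) form for real closed fields, or push through a cylindrical algebraic decomposition argument that inducts on the number of variables and keeps track of all relevant subresultants. Once this uniform description is in hand, the induction on the ambient dimension closes, and combined with the graph-projection reduction above this yields the theorem.
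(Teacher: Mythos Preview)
The paper does not give a proof of this theorem at all: it is stated in the preliminaries section as a standard result from real algebraic geometry (with references to \cite{RASS,Bochnak1998}) and is then used as a black box throughout. So there is no ``paper's own proof'' to compare against.

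Your outline is the standard route to Tarski--Seidenberg and is correct in spirit: reduce to a single coordinate projection via the graph, then eliminate one real variable at a time using subresultant/Sturm-type sign counts whose entries are polynomials in the remaining variables. One caution: in your final paragraph you say the elimination step can be handled by ``invoking the Tarski--Seidenberg principle in its algebraic (model-theoretic) form for real closed fields,'' which is circular, since that principle is exactly what you are trying to prove. The honest version of your argument is the second option you mention --- actually carrying out the one-variable elimination via subresultants (or a CAD-style induction) --- and that part is only sketched. If you were writing this up for real, that is where all the work lies; as it stands your proposal correctly identifies the reduction and the mechanism but stops short of executing the crucial step.
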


\begin{remark}\label{Remark1}{\rm
If $A\subset \mathbb{R}^n,$ $B\subset\mathbb{R}^m,$ and $C
\subset\mathbb{R}^n\times\mathbb{R}^m$ are semi-algebraic sets, then
we see that
$U:=\{x\in A:(x,y)\in C, \ \forall y\in B\}$ is also a semi-algebraic
set.
To see this, from Proposition \ref{property of SA} (iii) and
Theorem~\ref{Tarski Seidenberg Theorem}, we see that
$\{x\in A: \exists y\in B \textrm{ s.t. } (x,y)\not\in C\}$ is
semi-algebraic.
As the complement of $U$ is the union of the complement of $A$ and the
set $\{x\in A :\exists y\in B \textrm{ s.t. } (x,y)\notin C\},$
it follows that the complement of $U$ is semi-algebraic by
Proposition~\ref{property of SA}(iii).
Thus, $U$ is also semi-algebraic.
In general, if we have a finite collection of semi-algebraic sets,
then any set obtained from them by a finite chain of quantifiers is
also semi-algebraic.
}\end{remark}



Recall the Curve Selection Lemma which will be used in this paper
(see \cite{HaHV2017,Milnor1968}).

\begin{lemma}[Curve Selection Lemma]\label{CurveSelectionLemma}
Let $A$ be a semi-algebraic subset of $\mathbb{R}^n,$ and $u^* \in
\overline{A}\setminus A.$ Then there exists a real analytic
semi-algebraic curve
$$\phi \colon (-\epsilon, \epsilon) \to {\mathbb R}^n$$
with $\phi(0) = u^*$ and with $\phi(t) \in A$ for $t \in (0, \epsilon).$
\end{lemma}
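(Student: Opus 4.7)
The plan is to reduce the $n$-dimensional problem to a one-dimensional one via the squared-distance function to $u^*$, and then to invoke the Puiseux expansion of bounded semi-algebraic maps of one variable. After translating, I may assume $u^* = 0$. I would consider $\rho : A \to \RR_{\ge 0}$ given by $\rho(x) = \|x\|^2$. By Tarski--Seidenberg (Theorem~\ref{Tarski Seidenberg Theorem}), $\rho(A)$ is semi-algebraic in $\RR$, hence by Proposition~\ref{property of SA}(i) a finite union of points and intervals. Since $0 \in \overline{A}\setminus A$ and $\rho^{-1}(0) = \{0\}$, we have $0 \in \overline{\rho(A)} \setminus \rho(A)$, so there exists $\epsilon_0 > 0$ with $(0, \epsilon_0) \subseteq \rho(A)$.

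Next, I would apply \emph{definable choice} (a standard consequence of cylindrical algebraic decomposition) to the semi-algebraic set $\{(t, x) \in (0, \epsilon_0) \times A : \|x\|^2 = t\}$ to produce, after possibly shrinking $\epsilon_0$, a continuous semi-algebraic section $\psi : (0, \epsilon_0) \to A$ with $\|\psi(t)\|^2 = t$ for every $t$. In particular $\psi(t) \to 0 = u^*$ as $t \to 0^+$.

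Finally, I would convert $\psi$ into a real analytic semi-algebraic curve via a Puiseux expansion. Each coordinate $\psi_i$ is a bounded one-variable semi-algebraic function on $(0, \epsilon_0)$; a standard result then produces a common positive integer $N$ and convergent real power series $a_1, \dots, a_n$ with $\psi_i(t) = a_i(t^{1/N})$ on some $(0, \epsilon_1) \subseteq (0, \epsilon_0)$. Setting $\epsilon := \epsilon_1^{1/N}$ and $\phi(s) := (a_1(s), \dots, a_n(s))$, the map $\phi : (-\epsilon, \epsilon) \to \RR^n$ is real analytic with $\phi(0) = u^*$, and $\phi(s) = \psi(s^N) \in A$ for $s \in (0, \epsilon)$. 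Semi-algebraicity of $\phi$ follows from Proposition~\ref{property of SA}(vi) on $(0, \epsilon)$, and extends across $s = 0$ because each $\phi_i$ satisfies an irreducible polynomial equation $Q_i(s, \phi_i(s)) = 0$ on $(0, \epsilon)$, which by analytic continuation must hold on all of $(-\epsilon, \epsilon)$.

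The main obstacle is the Puiseux expansion in the third step. The standard route is to realize the graph of $\psi_i$ as a semi-algebraic subset of $\RR^2$, to show via Tarski--Seidenberg that $\psi_i$ satisfies a polynomial equation $P_i(t, y) = 0$ near $t = 0$, and then to apply the classical Puiseux theorem for algebraic functions over $\CC$, verifying that the resulting fractional power-series expansion has real coefficients because $\psi_i$ is real-valued. Definable choice in the second step, and the one-dimensional structure of $\rho(A)$ in the first, are comparatively routine consequences of the semi-algebraic framework recalled above.
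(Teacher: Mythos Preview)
The paper does not actually prove Lemma~\ref{CurveSelectionLemma}: it is stated as a recalled fact with citations to \cite{HaHV2017,Milnor1968} and no argument is given. So there is no ``paper's own proof'' to compare against.

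Your sketch is a correct outline of one of the standard proofs (closest in spirit to the treatment in Bochnak--Coste--Roy). The three steps --- reducing to a one-dimensional problem via the squared-distance image, invoking semi-algebraic definable choice to produce a section, and upgrading to real-analyticity via Puiseux --- are all valid. Two small points worth tightening if you flesh this out: first, Puiseux a priori gives each coordinate $\psi_i$ an expansion in $t^{1/N_i}$ with its own $N_i$, and the common $N$ is obtained as $\mathrm{lcm}(N_1,\dots,N_n)$; second, definable choice yields a semi-algebraic section that need not be continuous everywhere, so you should explicitly invoke the Monotonicity Lemma or cell decomposition to restrict to a subinterval on which it is continuous (you gesture at this with ``after possibly shrinking $\epsilon_0$''). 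Neither is a genuine gap, just places where the word ``standard'' is doing real work.
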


In what follows, we will need the following useful results (see, for example, \cite{Dries1996}).

\begin{lemma}[Monotonicity Lemma] \label{MonotonicityLemma}
Let $a < b$ in $\mathbb{R}.$ If $f \colon [a, b] \rightarrow
\mathbb{R}$ is a semi-algebraic function, then there is a partition $a
=: t_1 < \cdots < t_{N} := b$ of $[a, b]$ such that $f|_{(t_l, t_{l +
1})}$ is $C^1,$ and either constant or strictly monotone$,$ for $l \in
\{1, \ldots, N - 1\}.$
\end{lemma}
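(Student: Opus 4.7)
The plan is to reduce the statement to a claim about the set where $f$ fails to be $C^1$ and then analyze the sign of the derivative on each $C^1$ piece using Proposition~\ref{property of SA}(i).

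First I would argue that the set $S \subseteq (a,b)$ of points at which $f$ fails to be differentiable is semi-algebraic. Non-existence of $f'(x)$ is expressible by a first-order formula over the semi-algebraic graph of $f$ using only quantifiers (``there do not exist $L \in \RR$ and $\delta > 0$ such that for all $h$ with $0 < |h| < \delta$, $|(f(x+h)-f(x))/h - L| < \varepsilon$, for every $\varepsilon > 0$''), so by Remark~\ref{Remark1}, $S$ is semi-algebraic. By Proposition~\ref{property of SA}(i), $S$ is a finite union of points and intervals. The key sub-claim is that $S$ contains no open interval, which I would prove by contradiction using Lemma~\ref{CurveSelectionLemma} (Curve Selection Lemma): if an open interval $I$ were contained in $S$, pick $x^* \in I$ and consider a suitable semi-algebraic set built from the difference quotient $(x,h) \mapsto (f(x+h)-f(x))/h$ whose accumulation behavior at $x^*$ records the failure of the defining limit. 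The real-analytic curve supplied by the Curve Selection Lemma would then force a one-sided derivative to exist at some nearby point of $I$, contradicting the fact that $I \subseteq S$. An analogous argument applied to $f'$ on $(a,b) \setminus S$ shows that $f'$ is continuous outside a finite subset. Adjoining these finitely many points to the endpoints $a$ and $b$ yields a partition $a = t_1 < \cdots < t_m = b$ outside of which $f$ is $C^1$.

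Second, on each open interval $(t_l, t_{l+1})$ the derivative $f'$ is a semi-algebraic function of one variable. Consequently the three sets $\{f' > 0\}$, $\{f' = 0\}$, and $\{f' < 0\}$ are semi-algebraic subsets of $(t_l, t_{l+1})$, and by Proposition~\ref{property of SA}(i) each is a finite union of intervals and isolated points. I would refine the partition by adjoining the finitely many endpoints arising this way; on each refined sub-interval, $f'$ has constant sign, so by the classical mean value theorem applied to the $C^1$ function $f$, the restriction of $f$ is either constant (when $f' \equiv 0$) or strictly monotone (when $f'$ has constant nonzero sign). Combining the two refinements gives the desired partition.

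The main obstacle is the first step: ruling out a continuous semi-algebraic function that is nowhere differentiable on an interval is the essential finiteness input, and executing it cleanly with only the Curve Selection Lemma requires a careful choice of the auxiliary semi-algebraic set at $x^*$, so that a real-analytic parameterization of its accumulation locus yields an honest one-sided directional limit contradicting $I \subseteq S$. Alternatively, this step could be bypassed by invoking the cell decomposition theorem for semi-algebraic sets applied to the graph of $f$, which produces a finite partition into $C^1$ pieces directly; but since only the tools introduced in the excerpt are presumed available, the Curve Selection Lemma route is the natural one here.
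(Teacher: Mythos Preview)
The paper does not supply a proof of this lemma: it is quoted as a standard background fact with a reference to van den Dries--Miller, so there is no in-paper argument to compare against.

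Your overall architecture is sound, and the second step (refining by the sign of $f'$ via Proposition~\ref{property of SA}(i) and the mean value theorem) is correct. The difficulty is, as you anticipate, entirely in the first step, and your Curve Selection Lemma argument does not close it. The Curve Selection Lemma hands you a single analytic arc approaching a chosen point, whereas differentiability at $x^*$ requires controlling \emph{all} approaches $h\to 0$; you never specify the auxiliary set precisely enough to see how one analytic curve would force the full limit of the difference quotient to exist, let alone at a dense set of points in the putative bad interval $I$. In fact the natural elementary argument does not use Lemma~\ref{CurveSelectionLemma} at all: for fixed $x^*$ the difference quotient $q(h)=(f(x^*+h)-f(x^*))/h$ is semi-algebraic in $h$, so for any two values $L_1<L_2$ the sets $\{h>0:q(h)<L_1\}$ and $\{h>0:q(h)>L_2\}$ are finite unions of points and intervals by Proposition~\ref{property of SA}(i); if both accumulated at $0$ they would each contain an interval $(0,\delta)$, which is impossible. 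Hence one-sided limits of $q$ always exist (possibly infinite), and a further finiteness argument of the same flavour shows the exceptional set is finite. This is essentially the route taken in the cited references.

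One further remark: you dismiss the cell-decomposition approach as unavailable, but Theorem~\ref{CellTheorem} is stated in the very same preliminaries section (immediately after the Monotonicity Lemma) and is exactly the tool that makes your first step a one-liner: decompose the graph of $f$ into $C^p$-cells and project. Since the paper treats both results as imported black boxes, there is no circularity in invoking it here.
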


The next theorem (see \cite{Bochnak1998,Dries1996}) uses the concept
of a cell whose definition we omit. We do not need the specific
structure of cells described in the formal definition. For us, it will
be sufficient to think of a $C^p$-cell of dimension $r$ as of an
$r$-dimensional $C^p$-manifold, which is the image of the cube $(0,
1)^r$ under a semi-algebraic $C^p$-diffeomorphism. As follows from the
definition, an $n$-dimensional cell in $\mathbb{R}^n$ is an open set.

\begin{theorem}[Cell Decomposition Theorem] \label{CellTheorem}
Let $A \subset \mathbb{R}^n$ be a  semi-algebraic set. Then$,$ for any
$p \in \mathbb{N},$ $A$ can be represented as a disjoint union of a
finite number of cells of class~$C^p.$
\end{theorem}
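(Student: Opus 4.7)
The plan is to prove the theorem by induction on the ambient dimension $n$, constructing a cylindrical algebraic decomposition adapted to $A$ and then refining it so that each cell achieves $C^p$ regularity. For the base case $n = 1$, Proposition~\ref{property of SA}(i) says that every semi-algebraic subset of $\RR$ is a finite disjoint union of points and open intervals, which are $0$-dimensional and $1$-dimensional cells of class $C^\infty$ respectively, hence of class $C^p$.

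For the inductive step, I would write $A$ as a boolean combination of sign conditions on a finite list of polynomials $f_1, \ldots, f_k \in \RR[x_1, \ldots, x_n]$. Viewing each $f_i$ as a polynomial in $x_n$ over $\RR[x']$ with $x' = (x_1, \ldots, x_{n-1})$, I would use subresultants together with Sturm-type sign variation to produce a finite family $g_1, \ldots, g_\ell \in \RR[x']$ whose joint sign condition at $x'$ determines (i) the number of real roots of each $f_i(x', \cdot)$, (ii) their relative ordering, and (iii) the sign of every $f_j$ on each complementary interval. Applying the induction hypothesis to the semi-algebraic subsets of $\RR^{n-1}$ cut out by these sign conditions produces a $C^p$-cell decomposition $\{D_\alpha\}$ of $\RR^{n-1}$ over which the real roots of $f_1, \ldots, f_k$ are continuous semi-algebraic sections $\xi_{\alpha,1}(x') < \cdots < \xi_{\alpha, m_\alpha}(x')$. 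Over each $D_\alpha$, the cylinder $D_\alpha \times \RR$ decomposes into the graphs of the $\xi_{\alpha,j}$ and the open bands between them; each such piece is semi-algebraic, and any set defined by a sign condition on the $f_i$'s is a finite union of these pieces.

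It remains to upgrade the decomposition so that every cell is actually of class $C^p$. The Implicit Function Theorem guarantees that $\xi_{\alpha,j}$ is $C^\infty$ at those $x' \in D_\alpha$ where the corresponding root is simple, i.e.\ where $\partial_{x_n} f_i$ does not vanish at $(x', \xi_{\alpha,j}(x'))$. I would therefore adjoin the partial derivatives $\partial_{x_n}^s f_i$ for $1 \le s \le p$ to the list whose sign pattern is being stratified and reapply the induction hypothesis; this refines $\{D_\alpha\}$ so that on each refined cell the roots are locally graphs of simple zeros of smooth functions, and are therefore of class $C^p$. The refined pieces remain semi-algebraic by Proposition~\ref{property of SA}, so the induction closes.

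The main obstacle is the construction, for given $f_1, \ldots, f_k$, of a finite family $g_1, \ldots, g_\ell \in \RR[x']$ whose sign pattern captures the full qualitative behavior of the $x_n$-roots of the $f_i$ uniformly in $x'$ while simultaneously enforcing the smoothness required by the cell definition. This is the heart of the cylindrical decomposition procedure and relies on parametric root counting via subresultants (or Thom's lemma and Sturm sequences), together with a careful stratification that separates simple roots from multiple roots and controls all partial derivatives up to order $p$. Once this parametric control is in place, the induction proceeds cleanly using Theorem~\ref{Tarski Seidenberg Theorem} and Proposition~\ref{property of SA}.
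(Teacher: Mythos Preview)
The paper does not give its own proof of this statement: Theorem~\ref{CellTheorem} is quoted as a standard result with references to \cite{Bochnak1998,Dries1996}, and no argument is supplied. So there is no in-paper proof to compare against.

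Your sketch is the standard cylindrical algebraic decomposition (CAD) argument, and as an outline it is correct and matches what one finds in the cited references. Two remarks. First, your handling of the $C^p$ regularity is slightly off-target: once you have stratified the base so that on each $D_\alpha$ the relevant root $\xi_{\alpha,j}$ is a \emph{simple} zero of some $f_i(x',\cdot)$ (i.e.\ $\partial_{x_n} f_i \ne 0$ along the graph), the Implicit Function Theorem already gives $\xi_{\alpha,j}$ of class $C^\infty$ on $D_\alpha$, so there is no need to adjoin higher $x_n$-derivatives up to order $p$; adjoining $\partial_{x_n} f_i$ (together with the discriminants and pairwise resultants that separate distinct roots) suffices. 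Second, you should say explicitly that the induction hypothesis furnishes $D_\alpha$ as $C^p$ cells, and that a graph of a $C^p$ section over a $C^p$ cell, and a band between two such sections, are again $C^p$ cells by composing with the obvious diffeomorphisms; without this the ``cells'' you produce are only semi-algebraic pieces. With those clarifications the induction closes, and your proof is in line with the textbook treatment the paper defers to.
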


By Cell Decomposition Theorem, for any $p \in \mathbb{N}$ and any
nonempty semi-algebraic subset $A$ of $\mathbb{R}^n,$ we can write $A$
as a disjoint union of finitely many semi-algebraic $C^p$-manifolds of
different dimensions. The {\em dimension} $\dim A$ of a nonempty
semi-algebraic set $A$ can thus be defined as the dimension of the
manifold of highest dimension of its decomposition. This dimension is
well defined and independent of the decomposition of $A.$ By
convention, the dimension of the empty set is taken to be negative
infinity. We will need the following result (see
\cite{Bochnak1998,Dries1996}).

\begin{proposition} \label{DimensionProposition}
\begin{enumerate}
\item [{\rm (i)}] Let $A \subset \mathbb{R}^n$ be a semi-algebraic set and
$f \colon A \to\mathbb{R}^m$ a semi-algebraic map. Then$,$ $\dim f(A)\leq\dim  A.$

\item [{\rm (ii)}] Let $A \subset \mathbb{R}^n$ be a nonempty
	semi-algebraic set. Then$,$ $\dim(\bar{A} \setminus A) < \dim A.$
	In particular$,$ $\dim(\bar{A})=\dim A.$

\item [{\rm (iii)}] Let $A, B \subset \mathbb{R}^n$ be semi-algebraic sets. Then$,$
 $$\dim (A \cup B) = \max\{ \dim A, \dim B\}.$$
\end{enumerate}
\end{proposition}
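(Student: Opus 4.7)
The plan is to derive all three statements from the Cell Decomposition Theorem together with the fundamental ``frontier'' property that the closure of a cell of dimension $r$ meets only cells of dimension at most $r$, with equality only on the cell itself. I would handle the parts in the order (iii), (i), (ii), since the proof of (ii) uses both (i) (in the form of coordinate projections) and (iii), once the cell machinery is in place.

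For (iii), I would take a common cell decomposition of $A\cup B$ compatible with both $A$ and $B$, so that every cell lies entirely in $A$, entirely in $B$, or in $A\cap B$. Each cell is then contained in $A$ or in $B$, so its dimension is at most $\max\{\dim A,\dim B\}$, giving $\dim(A\cup B)\leq \max\{\dim A,\dim B\}$. The reverse inequality is immediate monotonicity of dimension under inclusion, which follows from the same decomposition by noting that the maximal-dimension cells of $A$ (resp.\ $B$) persist in the joint decomposition.

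For (i), decompose $A$ into cells $C_1,\ldots,C_k$, so that $f(A)=\bigcup_i f(C_i)$; by (iii) it suffices to show $\dim f(C_i)\leq \dim C_i$ for each $i$. Since a cell of dimension $r$ is a semi-algebraic $C^p$-diffeomorphic image of $(0,1)^r$, this reduces to the statement that for any semi-algebraic map $g\colon (0,1)^r\to\RR^m$ one has $\dim g((0,1)^r)\leq r$. I would prove this by applying Cell Decomposition to the graph of $g\circ\varphi_i$ (where $\varphi_i$ parametrises $C_i$) compatibly with the projections onto both coordinate factors, and invoking the basic fact from the theory of cells that a coordinate projection never raises the dimension. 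For (ii), take a cell decomposition $A=\bigsqcup_{i=1}^k C_i$. Then $\bar A=\bigcup_i \bar{C_i}$ and $\bar A\setminus A\subset \bigcup_i(\bar{C_i}\setminus C_i)$. The frontier property of cells says each $\bar{C_i}\setminus C_i$ is a finite union of cells of dimension strictly less than $\dim C_i$. Combining with (iii) yields $\dim(\bar A\setminus A)<\max_i \dim C_i=\dim A$, and a second application of (iii) gives $\dim\bar A=\max\{\dim A,\dim(\bar A\setminus A)\}=\dim A$.

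The main obstacle is (i): showing rigorously that a semi-algebraic map cannot raise dimension is more delicate than it looks, because the fibre-dimension argument requires a refined decomposition of the graph that simultaneously respects both source and target projections. The cleanest route is to stay entirely inside the cell framework, so that on each piece of a sufficiently refined compatible decomposition of the graph the projection to $\RR^m$ is either a diffeomorphism onto a lower-dimensional cell or collapses a factor; alternatively, one can invoke the characterisation that $\dim A$ equals the largest $r$ for which some non-empty open subset of $\RR^r$ injects semi-algebraically into $A$, from which the image bound follows at once by composing with a semi-algebraic selection from the fibres of $f$.
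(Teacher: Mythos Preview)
The paper does not prove this proposition at all: it is stated as a standard fact from semi-algebraic geometry with a bare citation to \cite{Bochnak1998,Dries1996}, and no argument is given. So there is nothing in the paper to compare your proposal against.

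As for the proposal itself, it is a reasonable outline of how these facts are developed in the cited references. Two comments. First, your argument for (ii) invokes the ``frontier property'' that $\dim(\bar C\setminus C)<\dim C$ for each cell $C$; note that this is precisely the special case of (ii) for cells, so to avoid circularity you must establish it independently. In the standard treatments this is done by an explicit induction on the ambient dimension using the recursive definition of a cylindrical cell (a cell in $\RR^{n+1}$ is a graph or a band over a cell in $\RR^n$), and you should say so. Second, for (i) the cleanest route is the one you allude to: decompose the graph of $f$ into cells and use that the coordinate projection $\RR^{n+m}\to\RR^{n+m-1}$ sends each cell onto a cell of no larger dimension, again immediate from the recursive definition; iterating gives both $\dim(\mathrm{graph}\,f)\le\dim A$ (via the injective projection to $\RR^n$) and $\dim f(A)\le\dim(\mathrm{graph}\,f)$ (via the projection to $\RR^m$). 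This makes the ``delicate'' step entirely mechanical and avoids the fibrewise selection argument.
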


Combining Theorems~2.4.4,~2.4.5 and Proposition~2.5.13 in \cite{Bochnak1998}, it follows that
\begin{proposition}\label{prop1}
Let $A$ be a semi-algebraic set of $\Bbb R^n.$ The following statements hold.
\begin{enumerate}
    \item [{\rm (i)}] $A$ has a finite number of connected components which are closed in $A.$
    \item [{\rm (ii)}] $A$ is connected if and only if it is path connected.
\end{enumerate}
\end{proposition}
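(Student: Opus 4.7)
The plan is to deduce both parts from the Cell Decomposition Theorem~\ref{CellTheorem} together with the Curve Selection Lemma~\ref{CurveSelectionLemma}. As a first step, I would invoke Theorem~\ref{CellTheorem} to write $A$ as a disjoint finite union $A=\bigsqcup_{i=1}^N C_i$ of $C^1$-cells; since each $C_i$ is by definition the image of an open cube $(0,1)^{r_i}$ under a semi-algebraic $C^1$-diffeomorphism, it is in particular path connected.

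For part~(i), finiteness of the connected components drops out immediately: $A$ is a finite union of the connected sets $C_i$, so it has at most $N$ components, each of which is itself a finite union of some of the $C_i$'s. The closedness of each component in $A$ is a purely topological fact: the closure within $A$ of a connected subset of $A$ is again connected, so maximality forces each component to coincide with its $A$-closure.

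For part~(ii), the implication ``path connected $\Rightarrow$ connected'' is standard, so I only need the converse. My approach is to introduce the equivalence relation on $\{1,\ldots,N\}$ given by $i\sim j$ iff some point of $C_i$ and some point of $C_j$ can be joined by a continuous semi-algebraic path in $A$; this is well defined because each individual $C_i$ is path connected. For each equivalence class $I$ set $A_I:=\bigcup_{i\in I} C_i$. Each $A_I$ is path connected by concatenation of paths, and the $A_I$'s partition $A$, so it suffices to show that each $A_I$ is clopen in $A$---the connectedness of $A$ will then force exactly one nonempty class, whence $A$ itself is path connected.

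The crux, and the step I expect to be the main obstacle, is this clopen claim, because it is what forces adjacent cells of possibly very different dimensions to lie in the same equivalence class. To handle it, I would fix $x\in C_i$ with $i\notin I$ and suppose $x\in\overline{A_I}$; then $x\in\overline{C_j}\setminus C_j$ for some $j\in I$, and Lemma~\ref{CurveSelectionLemma} applied to the semi-algebraic set $C_j$ and the point $u^{*}=x$ would produce a real analytic semi-algebraic curve $\phi\colon(-\epsilon,\epsilon)\to\RR^n$ with $\phi(0)=x$ and $\phi(t)\in C_j$ for $t\in(0,\epsilon)$. Restricting $\phi$ to $[0,\epsilon/2]$ yields a continuous semi-algebraic path in $A$ joining $x\in C_i$ to $\phi(\epsilon/2)\in C_j$, contradicting $i\notin I$. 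This shows $A_I$ is closed in $A$, and since its complement is a finite union of the remaining $A_J$'s which are closed by the same argument, $A_I$ is also open. The delicate ingredient throughout is the gluing provided by the Curve Selection Lemma, which is the only tool in the preamble strong enough to produce actual paths into lower-dimensional boundary strata.
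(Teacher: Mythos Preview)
Your argument is correct. The paper itself does not actually prove this proposition: immediately before the statement it simply says ``Combining Theorems~2.4.4,~2.4.5 and Proposition~2.5.13 in \cite{Bochnak1998}, it follows that\ldots'', i.e., it defers entirely to the standard reference. By contrast, you give a self-contained proof using only tools already introduced in the preliminaries (Theorem~\ref{CellTheorem} and Lemma~\ref{CurveSelectionLemma}). Your handling of part~(ii) is the right one: decomposing into cells, grouping them by the ``joinable by a semi-algebraic path in $A$'' relation, and then using the Curve Selection Lemma to show that no boundary point of a group can lie in a different group is exactly the mechanism that forces path connectedness. The verification that $x\in\overline{C_j}\setminus C_j$ (needed to invoke Lemma~\ref{CurveSelectionLemma}) is clean because the cells are pairwise disjoint, and the resulting curve $\phi|_{[0,\epsilon/2]}$ indeed lands in $A$ since $\phi(0)=x\in C_i\subset A$ and $\phi((0,\epsilon))\subset C_j\subset A$. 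Compared with the paper's bare citation, your approach has the advantage of being internal to the toolkit the paper sets up and of making transparent \emph{why} semi-algebraicity (via Curve Selection) is what upgrades connectedness to path connectedness.
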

Hence, in the rest of this paper, by saying that a semi-algebraic subset of $\RR^n$
is connected, we also mean that it is path connected.

\begin{theorem}\label{th::num}
	For any polynomials $f_1,\ldots,f_s,g_1,\ldots,g_l\in\RR[x]$ with
	degree bounded by $d\in\N$, the number of connected components of
	the semi-algebraic set
	\[
		S:=\{x\in\RR^n : f_1(x)=\cdots=f_s(x)=0,\ g_1(x)\neq 0,
		\ldots, g_l(x)\neq 0\},
	\]
is bounded from above by 
\[
	N(n,d,l):=\left\{
		\begin{aligned}
			&d(2d-1)^{n-1},& \text{if}\ l=0,\\
			&(ld+1)(2ld+1)^n,& \text{if}\ l>0.
		\end{aligned}
	\right.
\]
\end{theorem}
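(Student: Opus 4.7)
The plan is to handle the two cases separately, treating the algebraic case ($l = 0$) as an instance of the classical Oleinik--Petrovsky--Thom--Milnor theorem, and reducing the case $l > 0$ to the algebraic case by a standard Rabinowitsch-style trick that introduces an auxiliary variable.

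For the case $l = 0$, the set $S$ is the real algebraic variety cut out in $\RR^n$ by $f_1, \ldots, f_s$, each of degree at most $d$. The bound $d(2d-1)^{n-1}$ is Milnor's classical estimate on the sum of the Betti numbers of such a variety, and in particular on the number of connected components (which is the zeroth Betti number). Sketching this classical argument, one replaces the system with the single equation $F := f_1^2 + \cdots + f_s^2 = 0$, considers smooth compact approximations of the form $\{F = \varepsilon\}$ intersected with a large closed ball, applies Morse theory to a generic linear height function on the approximation, and estimates the number of its critical points by B\'ezout's theorem. A careful degree count absorbs the doubling produced by $F$ and yields the clean bound $d(2d-1)^{n-1}$. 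In the present write-up I would simply cite this theorem (e.g., Milnor's original paper or the Basu--Pollack--Roy monograph).

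For the case $l > 0$, set $g(x) := g_1(x) \cdots g_l(x) \in \RR[x]$, which has degree at most $ld$. Since $g_1(x) \neq 0, \ldots, g_l(x) \neq 0$ if and only if $g(x) \neq 0$, I introduce an additional real variable $t$ and define
\[
	\widetilde{S} := \bigl\{(x,t) \in \RR^{n+1} : f_1(x) = \cdots = f_s(x) = 0,\ t\,g(x) - 1 = 0 \bigr\}.
\]
The projection $(x,t) \mapsto x$ has the semi-algebraic inverse $x \mapsto (x, 1/g(x))$, so it is a homeomorphism $\widetilde{S} \to S$; in particular, $S$ and $\widetilde{S}$ have the same number of connected components.

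Now $\widetilde{S}$ is a real algebraic subset of $\RR^{n+1}$ defined by polynomials of degree at most $\max\{d,\, ld+1\} = ld+1$. Applying the $l = 0$ bound in ambient dimension $n+1$ with degree bound $ld+1$ yields
\[
	(ld+1)\bigl(2(ld+1) - 1\bigr)^{n} \;=\; (ld+1)(2ld+1)^{n},
\]
which is the claimed estimate. The crux of the argument is therefore the classical Milnor bound used in the $l = 0$ case; once that is invoked, the passage to $l > 0$ via the auxiliary variable $t$ is essentially mechanical, and I expect the author's proof to follow this pattern.
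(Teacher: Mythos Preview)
Your proposal is correct and follows the standard route. The paper itself does not give an argument at all: it simply cites \cite[Proposition~3.9.4]{RASS} for the $l=0$ Milnor--Thom bound and \cite[proof of Proposition~4.4.5]{RASS} for the reduction when $l>0$, and the Rabinowitsch trick you spell out (replace $g_1\neq 0,\ldots,g_l\neq 0$ by $t\,g_1\cdots g_l-1=0$ in $\RR^{n+1}$) is precisely the argument behind that second citation.
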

\begin{proof}
	See \cite[Proposition 3.9.4]{RASS} and the proof of \cite[Proposition 4.4.5]{RASS}. 
\end{proof}

Next we state a semi-algebraic version of Sard's theorem with the
parameter  in a simplified form which is sufficient for the
applications studied here. 
Recall that, for an open set $X\subset\RR^n$ and a $C^{\infty}$ map
$F: X\rightarrow\RR^m$, a point $y\in\RR^m$ is called a {\itshape
regular value} for
$F$ iff either $F^{-1}(y)=\emptyset$ or the derivative $DF(x): \RR^n
\rightarrow \RR^m$ is surjective at every point $x\in F^{-1}(y)$.
The following result is also called Thom's weak transversality theorem.

\begin{theorem}[Sard's theorem with parameter] \label{SardTheorem}
	Let $\mathscr{P}$ and $X$ be open semi-algebraic sets in $\RR^p$
	and $\RR^n$, respectively. Let $F: \mathscr{P}\times X \rightarrow
	\RR^m,\ (u,x)\rightarrow  F(u,x)$, be a semi-algebraic map of
	class $C^{\infty}$. If $y\in\RR^m$ is a regular value of $F$, then
	there exists an open and dense semi-algebraic subset $U$ in
	$\mathscr{P}$ such that, for each $u\in U$, $y$ is a regular value
	of the map $F_u : X\rightarrow \RR^m,\ x\rightarrow F(u,x)$.
%
%
\end{theorem}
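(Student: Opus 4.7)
The plan is to reduce the theorem to the classical Sard theorem by pulling back along a suitable projection. If $M := F^{-1}(y)$ is empty then each $F_u^{-1}(y)$ is empty and we may take $U = \mathscr{P}$; otherwise the hypothesis that $y$ is a regular value of $F$ together with the implicit function theorem shows that $M$ is a semi-algebraic $C^\infty$-submanifold of $\mathscr{P}\times X$ of dimension $p+n-m$. Consider the semi-algebraic $C^\infty$ projection $\pi \colon M \to \mathscr{P}$, $(u,x)\mapsto u$, and let $\Sigma \subset M$ denote its critical set.

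The key observation is the equivalence: $y$ is a regular value of $F_u$ if and only if $u$ is a regular value of $\pi$. Fix $(u,x)\in M$; since $DF(u,x)\colon \RR^p\times\RR^n \to \RR^m$ is surjective, the tangent space $T_{(u,x)}M$ is $\ker DF(u,x)$. Surjectivity of $D\pi(u,x)$ means that for every $v\in\RR^p$ there exists $w\in\RR^n$ with $D_u F(u,x)\,v + D_x F(u,x)\,w = 0$, i.e.\ $\mathrm{range}(D_u F) \subseteq \mathrm{range}(D_x F)$. Combined with $\mathrm{range}(D_u F) + \mathrm{range}(D_x F) = \RR^m$, this is equivalent to $D_x F(u,x) = DF_u(x)$ being surjective. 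Hence the set of bad parameters is exactly $B := \pi(\Sigma)$.

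Since $\Sigma$ is cut out inside $M$ by the simultaneous vanishing of all $m\times m$ minors of the Jacobian of $F$ with respect to $x$, it is semi-algebraic, and by Theorem \ref{Tarski Seidenberg Theorem}, so is $B\subset\mathscr{P}$. The classical Sard theorem applied to $\pi$ (which is $C^\infty$, hence of sufficient regularity regardless of the relative dimensions) shows that $B$ has Lebesgue measure zero in $\RR^p$. A semi-algebraic subset of $\mathscr{P}$ of dimension $p$ has nonempty interior and thus positive measure, so necessarily $\dim B < p$. By Proposition \ref{DimensionProposition}(ii), $\dim \overline{B} = \dim B < p$, so $\overline{B}$ has empty interior in $\mathscr{P}$. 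Therefore $U := \mathscr{P} \setminus \overline{B}$ is an open, dense, semi-algebraic subset of $\mathscr{P}$, and for every $u\in U$, $y$ is a regular value of $F_u$.

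The main obstacle is the linear algebra equivalence in the second paragraph, which underlies everything and crucially uses the hypothesis that $y$ is a regular value of $F$ itself (not just of the individual fibers $F_u$). Once the problem is translated into the statement that the critical values of $\pi$ form a nowhere dense semi-algebraic set, one combines classical Sard with the fact that a positive-dimensional semi-algebraic set has positive Lebesgue measure, and with Proposition \ref{DimensionProposition}(ii) to pass from $B$ to $\overline{B}$ without increasing the dimension.
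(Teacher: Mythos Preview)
Your argument is correct and is precisely the classical route to Thom's weak transversality theorem: pull $y$ back to the smooth semi-algebraic submanifold $M=F^{-1}(y)$, identify regularity of $F_u$ at $x$ with regularity of the projection $\pi\colon M\to\mathscr{P}$ at $(u,x)$ via the linear-algebra lemma, apply ordinary Sard to $\pi$, and then use semi-algebraicity of the critical-value set (together with the fact that a full-dimensional semi-algebraic set has nonempty interior, hence positive measure) to promote ``measure zero'' to ``contained in a closed semi-algebraic set of dimension $<p$''. The paper does not supply its own proof of this statement; it simply refers to Guillemin--Pollack \cite{DT} and to \cite[Theorem~1.10]{HaHV2017}, and your proof is exactly the argument one finds in those sources, specialized to the semi-algebraic setting.
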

\begin{proof}
For a proof, we refer the reader to~\cite{DT} or
\cite[Theorem~1.10]{HaHV2017}.
\end{proof}

%

\subsection{Resultants and Discriminants}
Let us first review some elementary background about {\itshape resultants} and
{\itshape discriminants}.  More details can be found in
\cite{Cox-Little-OShea:UAG2005,DRMD,NieDisNon}. 

Let $f_1,\ldots,f_n$ be
homogeneous polynomials in $\RR[x]$. The resultant
$\res(f_1,\ldots,f_n)$ is a polynomial in the coefficients of $f_1,\ldots,f_n$
satisfying 
\[
\res(f_1,\ldots,f_n)=0\quad\Leftrightarrow\quad\exists 0\neq u\in\CC^n, 
\ f_1(u)=\cdots=f_n(u)=0.
\]
Let $f_1,\ldots,f_m$ be homogeneous polynomials with $m<n$ and suppose
that at least one $\deg(f_i)>1$.
The discriminant for $f_1,\ldots,f_m$, denoted by $\Delta(f_1,\ldots,f_m)$, is 
a polynomial in the coefficients of $f_1,\ldots,f_m$ such that
\[
\Delta(f_1,\ldots,f_m)=0
\]
if and only if the polynomial system
\[
f_1(x)=\cdots=f_m(x)=0
\]
has a solution $0\neq u\in\CC^n$ such that the Jacobian matrix of
$f_1,\ldots,f_m$ does not have full rank.

The resultants and discriminants are also defined for inhomogeneous
polynomials. Let $f_0,f_1,\ldots,f_n$ be general polynomials in
$\RR[x]$. The resultant $\res(f_0,f_1,\ldots,f_n)$ is defined to be
$\res(\td{f}_0(\td{x}),\td{f_1}(\td{x}),\ldots,\td{f}_n(\td{x}))$
where each $\td{f}_i(\td{x}):=x_0^{\deg(f_i)}f(x/{x_0})$ is the
homogenization of $f_i(x)$ in $\td{x}:=(x_0,x_1,\ldots,x_n)$. Clearly,
if $\res(f_0,f_1,\ldots,f_n)\neq 0$, then 
\[
	f_1(x)=\cdots=f_n(x)=0
\]
has no solution in $\CC^n$. Let $f_1,\ldots,f_m$ be general
polynomials in $\RR[x]$ with $m\le n$. 
The discriminant $\Delta(f_1,\ldots,f_m)$ is
defined to be $\Delta(\td{f_1}(\td{x}),\ldots,\td{f}_m(\td{x}))$. If
$\Delta(f_1,\ldots,f_m)\neq 0$, then it can be proved by Euler's
formula that the polynomial system
\[
f_1(x)=\cdots=f_m(x)=0
\]
has no solution $0\neq u\in\CC^n$ such that the Jacobian matrix of
$f_1,\ldots,f_m$ does not have full rank (c.f. \cite{NieDisNon}).

\subsection{Subdifferentials and nonsmooth slope}
Now we recall some notation and properties of subdifferential, which
will be used in this paper. The following materials and more details
can be found in the comprehensive texts
\cite{Bounkhel2012,Clarke1990,Mordukhovich2006,Rockafellar98} about
nonsmooth analysis.

\begin{definition}{\rm
Let $f: \RR^n \rightarrow \RR$ be a locally Lipschitz function.
\begin{enumerate}[\upshape (i)]
	\item The {\itshape generalized directional derivative} (also
		known as {\itshape Clarke directional derivative}) of $f$ at
		$\bar{x}\in\RR^n$ in the direction $v\in\RR^n$, denoted by
		$f^0(\bar{x};v)$, is given by  
		\[
			f^0(\bar{x};v):=\limsup_{{x\rightarrow\bar{x}}\atop{t\downarrow
			0}} \frac{f(x+tv)-f(x)}{t}.
		\]
	\item The {\itshape generalized gradient} ({\itshape Clarke
		subdifferential}) of $f$ at $\bar{x}$, denoted by $\partial^{\circ}
		f(\bar{x})$, is defined as
		\[
			\partial^{\circ} f(\bar{x}):=\{\zeta\in\RR^n : \langle \zeta,
				v\rangle\le f^0(\bar{x};v),\ \forall v\in\RR^n\}.
		\]
\end{enumerate}}
\end{definition}
There are many other concepts of subdifferentiability for nonconvex
functions, like the {\itshape Fr{\'e}chet subdifferential}, the
{\itshape limiting subdifferential} and so on. Note that these sets of
subdifferential coincide for any convex continuous function.
Therefore, we have 
\begin{example}\label{exam:1}
	For each $x^{0}\in\RR^n,$ we have
\[
	\partial^{\circ} (\Vert\cdot -x^{0}\Vert)(x)=\left\{
		\begin{aligned}
			&\frac{x-x^{0}}{\Vert x-x^{0}\Vert}\quad & \text{if}\
			x\neq x^{0},\\
			&\bar{\mathbb{B}}\quad & \text{otherwise},
		\end{aligned}
		\right.
\]
where $\bar{\mathbb{B}}$ denotes the closed unit ball centered at the
origin in $\RR^n$. 
\end{example}

The following properties of Clarke subdifferential will be used in our
arguments.

\begin{proposition}\label{prop::clarke}
Let $f: \RR^n \rightarrow \RR$ be a locally Lipschitz function$,$ then the
following statements are true.	
\begin{enumerate}[\upshape (i)]
	\item If $x^{0}$ is  a local minimizer of $f$, then
		$0\in\partial^{\circ} f(x^{0})$.
	\item Let $g: \RR^n \rightarrow \RR$ be a locally Lipschitz
		function$,$ then
		\[
			\partial^{\circ} (f+g)(x)\subset \partial^{\circ}
			f(x)+\partial^{\circ} g(x),\quad\text{for all}\ x\in\RR^n.
		\]
	\item Let $x^{0}\in\RR^n$ be such that $f(x^{0})>0,$ then for any
		$\rho>0,$
		\[
			\partial^{\circ} f^\rho(x^{0})=\rho[f(x^{0})]^{\rho-1}\partial^{\circ}
			f(x^{0}).
		\]
\end{enumerate}
\end{proposition}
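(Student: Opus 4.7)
The three statements are classical in Clarke's nonsmooth calculus, so my plan is to derive each directly from the definitions of $f^0$ and $\partial^{\circ}f$ together with a couple of standard facts (which I would cite from \cite{Clarke1990,Rockafellar98}).

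For (i), I would argue as follows. Since $f$ is continuous and $x^{0}$ is a local minimizer, for every direction $v \in \RR^n$ and all sufficiently small $t>0$ one has $f(x^{0}+tv)-f(x^{0}) \geq 0$, hence $\liminf_{t\downarrow 0}[f(x^{0}+tv)-f(x^{0})]/t \geq 0$. The Clarke derivative $f^{0}(x^{0};v)$ is a limsup over \emph{both} a vanishing step $t$ and a nearby base point, so in particular it dominates the one-sided lower Dini derivative computed at $x=x^{0}$; consequently $f^{0}(x^{0};v)\geq 0$ for every $v$. The zero vector therefore satisfies $\langle 0,v\rangle = 0 \leq f^{0}(x^{0};v)$ for all $v$, which is precisely the definition of $0\in\partial^{\circ}f(x^{0})$.

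For (ii), I would start from the pointwise inequality $(f+g)^{0}(x;v)\leq f^{0}(x;v)+g^{0}(x;v)$, which falls out of the definition by splitting the difference quotient and using subadditivity of $\limsup$. Next I would invoke the standard fact that for a locally Lipschitz function the set $\partial^{\circ}f(x)$ is nonempty, convex and compact, and that its support function is exactly $v\mapsto f^{0}(x;v)$. Because the support function of $\partial^{\circ}f(x)+\partial^{\circ}g(x)$ equals $f^{0}(x;\cdot)+g^{0}(x;\cdot)$, any $\zeta\in\partial^{\circ}(f+g)(x)$ satisfies $\langle \zeta,v\rangle \leq f^{0}(x;v)+g^{0}(x;v)$ for all $v$, and a closed convex set is determined by its support function, giving $\zeta\in\partial^{\circ}f(x)+\partial^{\circ}g(x)$.

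For (iii), the positivity $f(x^{0})>0$ combined with continuity of the Lipschitz function $f$ produces an open neighborhood $U$ of $x^{0}$ on which $f$ is bounded below by some $\delta>0$. On $U$ the function $h(t):=t^{\rho}$ is $C^{1}$ (indeed $C^{\infty}$) on $(\delta/2,\infty)$ with $h'(t)=\rho t^{\rho-1}$, and $f^{\rho}=h\circ f$. The Clarke chain rule for the composition of a locally Lipschitz function with a $C^{1}$ outer function then gives
\[
\partial^{\circ}f^{\rho}(x^{0})=\partial^{\circ}(h\circ f)(x^{0})=h'(f(x^{0}))\,\partial^{\circ}f(x^{0})=\rho[f(x^{0})]^{\rho-1}\partial^{\circ}f(x^{0}),
\]
as claimed.

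None of the three items requires a new calculation; the only slightly delicate points are the support-function characterization used in (ii) and the exact hypotheses under which the smooth chain rule applies in (iii), both of which I would handle by explicit reference to standard results in \cite{Clarke1990}. I do not anticipate a genuine obstacle beyond careful bookkeeping.
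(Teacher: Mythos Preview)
Your sketches for all three parts are correct and follow the standard arguments from Clarke's calculus. Note, however, that the paper does not actually prove this proposition: it is stated as background material, with the justification being the blanket reference at the start of the subsection to \cite{Bounkhel2012,Clarke1990,Mordukhovich2006,Rockafellar98}. So there is no ``paper's own proof'' to compare against; your proposal simply fills in the details that the authors chose to outsource to the literature, and the arguments you give are precisely the ones found in \cite{Clarke1990} (Propositions~2.3.2, 2.3.3, and Theorem~2.3.9/the chain rule, respectively).
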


\begin{remark}\label{rk::pp}{\rm
	Note that the properties in Proposition \ref{prop::clarke} also hold for
	the limiting subdifferential {\upshape (c.f.
		\cite{Mordukhovich2006})}.}
	\end{remark}

\begin{definition}\label{def::slope}{\rm
	Let $f: \RR^n \rightarrow \RR$ be a locally Lipschitz function.
	We define the non-smooth slope of $f$ at $x\in\mathbb{R}^n$ by
\[
	\mathfrak{m}^{\circ}_f(x):=\inf\{\Vert \zeta\Vert : \zeta\in	\partial^{\circ}
		f(x)\}.
\]
}
\end{definition}

\begin{remark}{\rm
	 The non-smooth slope of $f$ is defined
	using limiting subdifferential in \cite{DHP2017,LMP2015}, by
	which some results about non-smooth {\L}ojasiewicz's inequality
	and error bounds for the maximum functions of finitely many
polynomials are derived.   }
\end{remark}

To end this section, we recall a classic theorem in analysis which
states that we can find a ``minimizing sequence'' for a continuous
function $f$ which is bounded from below. 
\begin{theorem}[Ekeland Variational Principle]\label{th::ekeland}
	Let $f : \RR^n \rightarrow \RR$ be a continuous function$,$ bounded
	from below. Let $\varepsilon>0$ and $x^{0}\in\RR^n$ be such that
	\[
		\inf_{x\in\RR^n} f(x)\le f(x^{0}) \le \inf_{x\in\RR^n}
		f(x)+\varepsilon.
	\]
	Then for any $\lambda>0,$ there exists some point $y^{0}\in\RR^n$
	such that 
	\[
		\begin{aligned}
			&f(y^{0})\le f(x^{0}),\\
			&\Vert y^{0}-x^{0}\Vert\le\lambda,\\
			f(y^{0})\le f(x)&+\frac{\varepsilon}{\lambda}\Vert
			x-y^{0}\Vert\quad \text{for all}\ x\in\RR^n.
		\end{aligned}
	\]
\end{theorem}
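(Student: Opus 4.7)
The plan is to reduce the three-condition conclusion to a single unconstrained minimization of the auxiliary function $g:\RR^n \to \RR$ defined by
\[
g(x) := f(x) + \frac{\varepsilon}{\lambda}\Vert x - x^{0}\Vert.
\]
I would first argue that $g$ attains a global minimum on $\RR^n$. It is continuous as the sum of a continuous function and a translated norm. Since $f$ is bounded from below by some $m \in \RR$, one has the lower bound $g(x) \ge m + \frac{\varepsilon}{\lambda}\Vert x - x^{0}\Vert$, which tends to $+\infty$ as $\Vert x\Vert \to \infty$. In the finite-dimensional ambient space $\RR^n$, continuity plus coercivity guarantees the infimum of $g$ is attained, so I pick any global minimizer $y^{0}$ of $g$.

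With such a $y^{0}$, all three conclusions then follow from the single fact that $g(y^{0}) \le g(x)$ for every $x \in \RR^n$. Taking $x = x^{0}$ yields $f(y^{0}) + \frac{\varepsilon}{\lambda}\Vert y^{0} - x^{0}\Vert \le f(x^{0})$, which is the first conclusion $f(y^{0}) \le f(x^{0})$ and, after rearrangement and the hypothesis $f(x^{0}) \le m + \varepsilon$,
\[
\frac{\varepsilon}{\lambda}\Vert y^{0} - x^{0}\Vert \le f(x^{0}) - f(y^{0}) \le (m + \varepsilon) - m = \varepsilon,
\]
which gives the second conclusion $\Vert y^{0} - x^{0}\Vert \le \lambda$. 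For the third, I take an arbitrary $x \in \RR^n$ in $g(y^{0}) \le g(x)$ and invoke the reverse triangle inequality $\Vert x - x^{0}\Vert - \Vert y^{0} - x^{0}\Vert \le \Vert x - y^{0}\Vert$ to obtain
\[
f(y^{0}) \le f(x) + \frac{\varepsilon}{\lambda}\big(\Vert x - x^{0}\Vert - \Vert y^{0} - x^{0}\Vert\big) \le f(x) + \frac{\varepsilon}{\lambda}\Vert x - y^{0}\Vert.
\]

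The only genuine obstacle is confirming that the global minimizer $y^{0}$ of $g$ exists, which is handled cleanly by the coercivity-plus-continuity argument above because the statement is restricted to $\RR^n$. The classical Ekeland principle is usually formulated in an arbitrary complete metric space, where minimum attainment is unavailable and one must instead introduce the partial order $u \preceq v \Leftrightarrow f(u) + \frac{\varepsilon}{\lambda}\Vert u - v\Vert \le f(v)$, build a decreasing sequence whose $\preceq$-lower sets are closed with diameters shrinking to zero, and identify $y^{0}$ as the common limit point; the finite-dimensional setting of the theorem as stated allows me to bypass this scaffolding entirely.
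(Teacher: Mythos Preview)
The paper does not supply its own proof of this theorem; it is stated as a classical result and used later as a black box. Your argument is therefore not being compared against anything in the paper, and on its own it is essentially correct for the finite-dimensional statement as written: the auxiliary function $g(x)=f(x)+\frac{\varepsilon}{\lambda}\Vert x-x^{0}\Vert$ is continuous and coercive on $\RR^n$, so a global minimizer $y^{0}$ exists, and the three conclusions follow exactly as you indicate.

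One small point of notation to clean up: you introduce $m$ as ``some'' lower bound for $f$, but in the chain
\[
\frac{\varepsilon}{\lambda}\Vert y^{0}-x^{0}\Vert \le f(x^{0})-f(y^{0}) \le (m+\varepsilon)-m=\varepsilon
\]
you are tacitly using both $f(x^{0})\le m+\varepsilon$ and $f(y^{0})\ge m$. The hypothesis only gives $f(x^{0})\le \inf f+\varepsilon$, which need not imply $f(x^{0})\le m+\varepsilon$ for an arbitrary lower bound $m<\inf f$. Simply take $m:=\inf_{x\in\RR^n} f(x)$ from the outset (the infimum is finite since $f$ is bounded below) and the display is then literally the statement's hypothesis combined with $f(y^{0})\ge\inf f$. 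With that adjustment the proof is complete. Your closing remark is also apt: the direct minimization shortcut works precisely because the ambient space is $\RR^n$, whereas the general complete-metric-space version requires the iterative partial-order construction you describe.
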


\section{Continuous selections of polynomial functions}\label{sec::CSS}
In this section, we will give some formal definitions and obtain some
basic properties about continuous selections of polynomial (or more
generally, semi-algebraic) functions. 

\begin{definition}\label{def::ppf}{\rm
		For given subsets $X\subseteq\wt{X}\subseteq\RR^n$ and $r$ 
		continuous functions $f_1,\ldots,f_r: \wt{X}\rightarrow \RR$,
		we say a function $f \colon X \rightarrow
		\mathbb{R}$ a {\it continuous selection} of $\{f_1,\ldots,f_r\}$ if
		$f$ is
		continuous and $f(x) \in \{f_1(x), \ldots, f_r(x)\}$ for all
		$x \in X.$ We call $\{f_1,\ldots,f_r\}$ the set of {\itshape
		selection functions} of $f$.
		We denote by $\mc(f_1,\ldots,f_r,X)$ the set of all
		continuous selections of $\{f_1,\ldots,f_r\}$ with the domain
		$X\subseteq\RR^n$. If $X=\RR^n$, we use the
		notation $\mc(f_1,\ldots,f_r)$ for simplicity. We call
		$I(f,x):=\{i\mid f_i(x)=f(x)\}$ the active index set of $f$ at a
		point $x\in X$. 
%
}\end{definition}
Obviously, the set $\mc(f_1,\ldots,f_r,X)$ contains various types of
functions composed by $\{f_1,\ldots,f_r\}$, among which are the
typical examples of the maximum and minimum functions occurring in
optimization
%
\[
	f_{\max}(x):=\max\{f_1(x),\ldots,f_r(x)\}\quad\text{and}\quad
	f_{\min}(x):=\min\{f_1(x),\ldots,f_r(x)\}.
\]
More generally, the set $\mc(f_1,\ldots,f_r,X)$ contains the following
max-min type functions
\begin{equation}\label{eq::maxmin}
	\max_{i\in\{1,\ldots,s\}}\min_{j\in J_i}
	f_j(x)\quad\text{and}\quad
	\min_{i\in\{1,\ldots,s\}}\max_{j\in J_i} f_j(x),
\end{equation}
where each $J_i\subseteq\{1,\ldots,r\}$. In fact, every
function which is representable by a formula involving
$\{f_1,\ldots,f_r\}$ together with a finite number of maximum or
minimum operations can be written as a max-min type function (c.f.
\cite{BARTELS1995385}). Conversely, if each $f_i$ is affine, then
it is shown in \cite[Corollary 2.1]{BARTELS1995385} that every
function in $\mc(f_1,\ldots,f_r)$ can be expressed in the
max-min type. 
\begin{proposition}\label{prop::Lip}
If $X$ is open and each $f_j$ is $C^1$-function$,$ $j = 1, \ldots, r,$ then each $f\in
\mc(f_1,\ldots,f_r,X)$ is locally Lipschitz. In this case$,$ for any $x\in
X,$
\begin{equation}\label{eq::clarkesub}
	\partial^{\circ} f(x)=\co\left\{\lim_{y\rightarrow x}\nabla f_i(y) :\
i\in I(f,x)\right\}=\co\{\nabla f_i(x) :\ i\in I(f,x)\},
\end{equation}
and hence
\begin{equation}\label{eq::mf}
		\mathfrak{m}^{\circ}_f(x)
		=\inf\left\{\Big\Vert \sum_{i\in I(f,x)}\mu_i\nabla f_i(x)\Big\Vert :
		\mu_i\ge 0, \sum_{i\in I(f,x)}\mu_i=1\right\}.
	\end{equation}
\end{proposition}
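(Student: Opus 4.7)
The plan is to address the three assertions in turn. First I will establish local Lipschitz continuity of $f$; this is where the $C^1$ hypothesis enters. Since each $f_j$ is $C^1$ on the open set $X$, its gradient is continuous and hence uniformly bounded on any compact set $K \subset X$ by some constant $L$. Consequently each $f_j$ is $L$-Lipschitz on any convex compact subset of $X$, and the conclusion that their continuous selection $f$ is also locally Lipschitz is exactly \cite[Corollary 4.1.1]{ScholtesBook}, which the introduction already invokes. A self-contained argument is also possible: restrict $f$ to a segment $[y, z]$ inside a convex compact neighborhood, observe that the restriction is a one-dimensional continuous selection of $L\|y-z\|$-Lipschitz functions, and use the fact that a continuous selection can switch between two selection functions $f_i$ and $f_j$ only at points where they agree, which forces the slope of $f$ along the segment to stay controlled.

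Having Lipschitzness, I turn to \eqref{eq::clarkesub}. I will use the Rademacher-based characterization
\[
	\partial^{\circ} f(x) = \co\left\{\lim_{k \to \infty} \nabla f(y_k) : y_k \to x,\ f \text{ differentiable at } y_k \right\}.
\]
The structural input is that at any point $y$ of differentiability of $f$, $f$ coincides on a neighborhood of $y$ with some $f_{i(y)}$, hence $\nabla f(y) = \nabla f_{i(y)}(y)$ with $i(y) \in I(f,y)$. For the inclusion $\subseteq$, take $y_k \to x$ through differentiability points and pass to a subsequence along which $i(y_k) \equiv i$; continuity of $\nabla f_i$ gives $\nabla f(y_k) \to \nabla f_i(x)$, and continuity of $f$ and $f_i$ together with $f(y_k) = f_i(y_k)$ forces $i \in I(f,x)$. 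For the reverse inclusion I cover a neighborhood of $x$ by the closed sets $A_i := \{y : f(y) = f_i(y)\}$, argue that for each $i \in I(f,x)$ the point $x$ lies in the closure of $\mathrm{int}(A_i)$, and then choose differentiability points $y_k \in \mathrm{int}(A_i)$ with $y_k \to x$, so that $\nabla f(y_k) = \nabla f_i(y_k) \to \nabla f_i(x) \in \partial^{\circ} f(x)$. The second equality in \eqref{eq::clarkesub} is then automatic from continuity of each $\nabla f_i$.

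Finally, \eqref{eq::mf} follows immediately from \eqref{eq::clarkesub}: the infimum of $\|\zeta\|$ over $\zeta \in \partial^{\circ} f(x)$ equals the infimum over convex combinations $\sum_{i \in I(f,x)} \mu_i \nabla f_i(x)$ with $\mu_i \ge 0$ and $\sum_i \mu_i = 1$. The step I expect to be the main obstacle is the reverse inclusion in \eqref{eq::clarkesub}: verifying that every $i \in I(f,x)$ is ``essentially active'' in the sense that $x$ lies in the closure of $\mathrm{int}(A_i)$ is the classical subtlety of the PC$^1$ calculus. In the present polynomial setting this should be handled using the semi-algebraic cell decomposition of the $A_i$'s near $x$ provided by Theorem \ref{CellTheorem}, which prevents the lower-dimensional degeneracies that would otherwise reduce \eqref{eq::clarkesub} to a mere upper bound.
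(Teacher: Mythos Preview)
Your treatment of local Lipschitzness and of the inclusion $\partial^\circ f(x) \subseteq \co\{\nabla f_i(x) : i \in I(f,x)\}$ is correct, and in substance matches the paper: the paper's proof consists only of the two citations \cite[Corollary 4.1.1]{ScholtesBook} and \cite[Theorem 2.5.1]{Clarke1990}, and you have essentially unpacked what those references supply.

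The gap is in your argument for the reverse inclusion. You assert that for every $i \in I(f,x)$ the point $x$ lies in $\overline{\mathrm{int}(A_i)}$, and propose to secure this via Theorem~\ref{CellTheorem}. This fails twice over. First, the proposition is stated for arbitrary $C^1$ selection functions on an open $X$, so the cell decomposition theorem is not available. Second, and decisively, the assertion is false even when the $f_i$ are polynomials: take $n=1$, $f_1(x)=x$, $f_2(x)=-x$, and the selection $f=f_1\in\mc(f_1,f_2)$. Then $I(f,0)=\{1,2\}$, but $A_2=\{0\}$ has empty interior, and indeed
\[
\partial^\circ f(0)=\{1\}\ \subsetneq\ [-1,1]=\co\{\nabla f_1(0),\nabla f_2(0)\}.
\]
No semi-algebraic stratification rescues this, because the index $2$ is active at $0$ without being \emph{essentially} active. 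Thus the equality in \eqref{eq::clarkesub}, read with the full active set $I(f,x)$, is not merely hard to prove but false in general; what the references cited by the paper actually deliver is the inclusion $\subseteq$ (equivalently, equality with the essentially active index set $I^e(f,x)\subseteq I(f,x)$), and that inclusion is all that is needed for the downstream uses in Proposition~\ref{prop::cri} and Theorem~\ref{th::u5}. You were right to flag this step as the delicate one; the resolution is not a sharper argument but a weaker claim.
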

\begin{proof}
Clearly, it is not difficult to verify that every $C^1$-function is locally Lipschitz continuous.
The results then follow by \cite[Corollary 4.1.1]{ScholtesBook} and \cite[Theorem
2.5.1]{Clarke1990}. 
\end{proof}
Consequently, if $X$ is open and each $f_j$ is $C^1$-function, $j = 1, \ldots, r,$ we
can define the critical point of $f$ in the following way.
\begin{definition}{\rm
	Let $f\in\mc(f_1,\ldots,f_r,X),$ where $X$ is open and each $f_j$
	is $C^1$-function, $j = 1, \ldots, r,$ a point $x^0\in X$ is called a {\it critical
	point} of $f$ if $0\in\partial^{\circ} f(x^0)$, i.e., there exists
	a tuple $(\mu_i\in\RR,\ i\in I(f,x^0))$ such that 
	\begin{equation}\label{eq::lambda}
		\mu_i\ge 0,\  i\in
	I(f,x^0),\quad \sum_{i\in I(f,x^0)}\mu_i=1\quad\text{and}\quad\sum_{i\in
		I(f,x^0)}\mu_i\nabla f_i(x^0)=0.
	\end{equation}
	If each $\mu_i>0,\ i\in I(f,x^0))$ for any tuple
	$(\mu_i\in\RR,\ i\in I(f,x^0))$ satisfying \eqref{eq::lambda}, we
	say the {\it strict complementarity} holds for the critical point
	$x^0$.
}\end{definition}
The first order necessary optimality condition states that a local
minimizer of $f\in\mc(f_1,\ldots,f_r,X)$ must be a critical point (c.f. \cite[Theorem
3.1]{Womersley1982}). 

 We denote by $\cri(f,X)$ the set of all critical points of $f$ on
 $X$ and by  $\cri(\mc(f_1,\ldots,f_r,X))$ the set of all critical
	points of all continuous selections in $\mc(f_1,\ldots,f_r,X)$,
	i.e.,
	\[
		\cri(\mc(f_1,\ldots,f_r, X))=\{x^0\in X: \exists
			f\in\mc(f_1,\ldots,f_r, X)\ \text{such that}\ 0\in\partial^{\circ}
		f(x^0)\}.
	\]
For simplicity, we adopt the notation $\cri(f)$ and
$\cri(\mc(f_1,\ldots,f_r))$ when $X=\RR^n$.

Now let us see some favorable properties enjoyed by continuous
selections of continuous semi-algebraic functions $\{f_1,\ldots,f_r\}$
on a semi-algebraic set $X$.

\begin{theorem}\label{th::Lip}
Let $X \subset \mathbb{R}^n$ be a semi-algebraic set and $f_1, \ldots,
f_r \colon X \rightarrow \mathbb{R}$ be continuous semi-algebraic
functions. Then$,$ $\mc(f_1,\ldots,f_r,X)$ is a finite set and each
$f\in \mc(f_1,\ldots,f_r,X)$ is semi-algebraic.
\end{theorem}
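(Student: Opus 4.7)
The plan is to build a finite semi-algebraic partition of $X$ on which every continuous selection is forced to agree with a single $f_i$. First, by Proposition~\ref{property of SA}, for every pair $(i,j)$ and every relation $\mathrm{op}\in\{<,=,>\}$, the set $\{x\in X : f_i(x)\ \mathrm{op}\ f_j(x)\}$ is semi-algebraic. Taking the common refinement of these finitely many three-way splits gives a finite partition $X=\bigsqcup_{\alpha\in A}C_\alpha$ into semi-algebraic subsets on each of which the sign of $f_i-f_j$ is constant (always $>0$, always $=0$, or always $<0$) for every pair $(i,j)$. By Proposition~\ref{prop1}(i), each $C_\alpha$ has only finitely many connected components, themselves semi-algebraic, so refining further yields a finite semi-algebraic partition $X=\bigsqcup_{\beta\in B}D_\beta$ with every $D_\beta$ connected.

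Next, fix $f\in\mc(f_1,\ldots,f_r,X)$ and $\beta\in B$, and let $\alpha$ be the unique index with $D_\beta\subseteq C_\alpha$. The relation $i\sim j\Leftrightarrow f_i(x)=f_j(x)\ \text{for all}\ x\in C_\alpha$ is an equivalence relation on $\{1,\ldots,r\}$; choosing one representative per class yields continuous functions that are strictly ordered throughout $C_\alpha$, say $\phi_1^\alpha<\cdots<\phi_{m_\alpha}^\alpha$. For every $x\in D_\beta$, $f(x)\in\{\phi_1^\alpha(x),\ldots,\phi_{m_\alpha}^\alpha(x)\}$, hence the sets
\[
E_l:=\{x\in D_\beta : f(x)=\phi_l^\alpha(x)\},\qquad l=1,\ldots,m_\alpha,
\]
are closed in $D_\beta$ (preimages of $\{0\}$ under the continuous functions $f-\phi_l^\alpha$), are pairwise disjoint (since the $\phi_l^\alpha$ are strictly separated on all of $C_\alpha$), and cover $D_\beta$. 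Because $D_\beta$ is connected, exactly one $E_l$ equals $D_\beta$; picking any index $i(\beta)\in\{1,\ldots,r\}$ in the corresponding $\sim$-class, we obtain $f\equiv f_{i(\beta)}$ on $D_\beta$.

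Each continuous selection $f$ is thus determined by the labelling $\beta\mapsto i(\beta)$, and there are at most $r^{|B|}$ such labellings, proving that $\mc(f_1,\ldots,f_r,X)$ is finite. For semi-algebraicity, the graph of $f$ equals
\[
\bigcup_{\beta\in B}\bigl\{(x,y)\in D_\beta\times\RR : y=f_{i(\beta)}(x)\bigr\},
\]
a finite union of semi-algebraic sets (each $D_\beta$ is semi-algebraic and each $f_{i(\beta)}$ is semi-algebraic by hypothesis), and is therefore semi-algebraic by Proposition~\ref{property of SA}(ii); hence $f$ is a semi-algebraic function.

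The main obstacle is the connectedness argument that pins $f$ to a single $\phi_l^\alpha$ on each $D_\beta$: this is where one genuinely uses both the continuity of $f$ and the strict separation of the $\phi_l^\alpha$ throughout the larger cell $C_\alpha$, rather than merely at individual points. The rest is routine bookkeeping with the Boolean closure properties of semi-algebraic sets (Proposition~\ref{property of SA}) and the finiteness of connected components of semi-algebraic sets (Proposition~\ref{prop1}(i)).
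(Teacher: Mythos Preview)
Your proof is correct. Both you and the paper rest on the same underlying principle---on a connected set where the distinct values among $f_1,\ldots,f_r$ are strictly separated, a continuous selection must coincide with exactly one of them---but the organizing strategies differ. The paper argues by induction on $r$: it strips off $f_{k+1}$ by setting $A=\bigcup_{j\le k}\{f_j=f_{k+1}\}$, reduces to $\mc(f_1,\ldots,f_k,A)$ on $A$, and on each connected component of $X\setminus A$ uses a path/intermediate-value argument to show that a selection either avoids $f_{k+1}$ entirely or equals it identically. You instead build the whole combinatorial picture at once: the common refinement of all pairwise sign conditions $\{f_i\lessgtr f_j\}$, followed by connected components, yields a finite semi-algebraic partition on whose pieces the distinct $f_i$'s are strictly ordered, and then a clean clopen-cover argument pins $f$ to a single $f_{i(\beta)}$ on each piece. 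Your route is more direct, avoids the induction, and immediately produces the crude bound $r^{|B|}$ on $\#\mc(f_1,\ldots,f_r,X)$; the paper's inductive peeling is slightly more economical in that it never needs the full sign partition, only the coincidence locus of the last function with the earlier ones. One minor point: you invoke that connected components of a semi-algebraic set are themselves semi-algebraic---this is standard and is also used (implicitly) in the paper's own proof, though Proposition~\ref{prop1}(i) as stated only asserts finiteness and relative closedness.
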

\begin{proof}
	It is clear for the case $r=1$. Assume that the conclusion
	holds for $r=k$, then we prove that it is also true for $r=k+1$. 
	Then, the conclusion follows by induction on $r$.

		Let $A=\cup_{j=1}^k\{x\in X : f_j(x)=f_{k+1}(x)\}$.
		Clearly, $A$ is a semi-algebraic set. For any
		$f\in\mc(f_1,\ldots,f_{k+1},X)$, its restrictions on $A$ and
		$X\setminus A$ are functions in $\mc(f_1,\ldots,f_{k+1},A)$ and
		$\mc(f_1,\ldots,f_{k+1},X\setminus A)$, respectively.
		In the following, we only need to prove that both
		$\mc(f_1,\ldots,f_{k+1},A)$ and
		$\mc(f_1,\ldots,f_{k+1},X\setminus A)$ are finite sets and
		each function in $\mc(f_1,\ldots,f_{k+1},A)$ and
		$\mc(f_1,\ldots,f_{k+1},X\setminus A)$ is semi-algebraic.
		
	For any $f\in\mc(f_1,\ldots,f_{k+1},A)$, by the definition of
	$A$, $f(x)\in\{f_1(x),\ldots,f_k(x)\}$ for all $x\in A$.
	Therefore, it holds that
	$\mc(f_1,\ldots,f_{k+1},A)=\mc(f_1,\ldots,f_k,A)$. Then, by
	induction, $\mc(f_1,\ldots,f_{k+1},A)$ is a finite set and each
	$f\in\mc(f_1,\ldots,f_{k+1},A)$ is semi-algebraic.

	Since $X\setminus A$ is semi-algebraic, it has finitely many
	semi-algebraically (path) connected components, say
	$D_1,\ldots,D_s$. Now, it suffices to prove that 
	$\mc(f_1,\ldots,f_{k+1},D_i)$ is a finite set and each $f\in
	\mc(f_1,\ldots,f_{k+1},D_i)$ is semi-algebraic for every
	$i=1,\ldots,s$. To this end, we show that for each $f\in
	\mc(f_1,\ldots,f_{k+1},D_i)$, either $f\in
	\mc(f_1,\ldots,f_k,D_i)$ or $f$ is the restriction of $f_{k+1}$ on
	$D_i$. Then, the
	conclusion follows by induction.  To the contrary, suppose that
	there exist $u,v\in D_i$ such that
	$f(u)\in\{f_1(u),\ldots,f_k(u)\}$ and $f(v)=f_{k+1}(v)$. 
	Since $D_i$ is path connected, there exists a continuous curve
	$\phi: [0,1]\rightarrow D_i$ such that $\phi(0)=u$ and $\phi(1)=v$.
	Let 
	\[
		\td{\tau}:=\sup\{\tau\in[0,1]:
		f(\phi(t))\in\{f_1(\phi(t)),\ldots,f_k(\phi(t))\}\ \text{for
		all}\ t\in[0,\tau]\}.
	\]
	By the continuity, we have
	$f(\phi(\td{\tau}))\in\{f_1(\phi(\td{\tau})),\ldots,f_k(\phi(\td{\tau}))\}$
	and $f(\phi(\td{\tau}))=f_{k+1}(\phi(\td{\tau}))$. It implies that
	$\phi(\td{\tau})\in A$, a contradiction.
%
%
%
\end{proof}

\begin{remark}{\rm
For arbitrary subset $X\subseteq\RR^n$ and continuous functions
$f_1,\ldots,f_r: \RR^n\rightarrow\RR$, the set $\mc(f_1,\ldots,f_r,X)$
is not necessarily finite. For example, it is clear that the set
$\mc(\sin x,\cos x,\RR)$ has infinitely many functions.}
\end{remark}

\begin{theorem}\label{th::minimizers}
Let $X \subset \mathbb{R}^n$ be a semi-algebraic set and $f_1, \ldots,
f_r \colon X \rightarrow \mathbb{R}$ be continuous semi-algebraic
functions. For any $f\in\mc(f_1,\ldots,f_r,X),$ the following holds$:$
\begin{enumerate}[\upshape (i)]
	\item The set of local $($resp.$,$ isolated local$,$ strictly local$)$
		minimizers of $f$ is semi-algebraic$;$ 
	\item The set of isolated local minimizers of $f$ coincides with
		its set of strictly local minimizers and both are finite.
\end{enumerate}
\end{theorem}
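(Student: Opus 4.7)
The plan is to exploit the semi-algebraicity of $f$ granted by Theorem~\ref{th::Lip}, and then to invoke the first-order definability described in Remark~\ref{Remark1}, together with the Curve Selection Lemma (Lemma~\ref{CurveSelectionLemma}) and the Monotonicity Lemma (Lemma~\ref{MonotonicityLemma}), to handle the finer structure.

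For part~(i), I would simply write each of the three sets as a first-order formula with quantifiers bounded by semi-algebraic sets. For example, the set of local minimizers is
\[
L:=\{x\in X:\exists\varepsilon>0,\ \forall y\in X,\ \|y-x\|<\varepsilon\Rightarrow f(y)\ge f(x)\},
\]
and the sets $S$ of strictly local and $I$ of isolated local minimizers admit analogous descriptions (the latter via the additional clause $B(x,\varepsilon)\cap L=\{x\}$). Since every quantifier ranges over a semi-algebraic set, Remark~\ref{Remark1} yields at once that $L$, $S$ and $I$ are semi-algebraic.

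For part~(ii), I would first establish $S=I$. To show $S\subseteq I$, assume for contradiction $x\in S\setminus I$; then local minimizers $y_k\to x$ with $y_k\neq x$ accumulate at $x$, and Lemma~\ref{CurveSelectionLemma} applied to the semi-algebraic set $L\setminus\{x\}$ supplies an analytic semi-algebraic arc $\phi:(0,\varepsilon)\to L\setminus\{x\}$ with $\phi(t)\to x$. Applying Lemma~\ref{MonotonicityLemma} to $f\circ\phi$ on a suitable subinterval, a constant value would be forced to equal $f(x)$ by continuity, contradicting $x\in S$, while a strictly monotone value contradicts the local-minimum property of $\phi(t)$ when compared with $\phi(t')$ for $t'$ close enough to $t$ that $\phi(t')$ sits inside the ball witnessing local-minimality of $\phi(t)$. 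Conversely, if $x\in I\setminus S$, then the semi-algebraic set $\{y\in X:f(y)=f(x)\}\setminus\{x\}$ has $x$ in its closure; Lemma~\ref{CurveSelectionLemma} produces a curve $\phi(t)\to x$ with $f(\phi(t))=f(x)$, and for $t$ small enough each $\phi(t)$ lies sufficiently deep inside the neighborhood witnessing that $x$ is a local minimizer, so $f\ge f(x)=f(\phi(t))$ near $\phi(t)$ and hence $\phi(t)\in L\setminus\{x\}$ accumulates at $x$, contradicting $x\in I$.

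Finally, to show $S$ is finite, I would assume otherwise. By Proposition~\ref{prop1} $S$ has finitely many (semi-algebraically) connected components, so some component is infinite and therefore of dimension at least one, and the Cell Decomposition Theorem~\ref{CellTheorem} supplies a one-dimensional semi-algebraic $C^1$-cell contained in $S$, hence a semi-algebraic $C^1$-embedding $\gamma:(0,1)\to S$. Applying Lemma~\ref{MonotonicityLemma} to $f\circ\gamma$ yields a subinterval on which $f\circ\gamma$ is either constant or strictly monotone; repeating the dichotomy used in the $S=I$ step, now with $\gamma(t)\in S$ in place of $\phi(t)\in L$, contradicts in each case the strict-local-minimum property of the points $\gamma(t)$. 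The main delicacy is precisely this coordination of the Curve Selection Lemma and the Monotonicity Lemma, requiring parameters to be chosen close enough for the defining neighborhoods of strict or plain local minimality to be compared along the selected curve.
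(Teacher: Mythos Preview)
Your proof is correct and follows the same core strategy as the paper: invoke Theorem~\ref{th::Lip} to obtain semi-algebraicity of $f$, use Tarski--Seidenberg (via Remark~\ref{Remark1}) for part~(i), and combine the Curve Selection Lemma with the Monotonicity Lemma for the equivalence $S=I$ in part~(ii). Two places where the paper is more economical deserve mention. First, the paper dispatches $I\subseteq S$ as ``clear'' without Curve Selection: if $x$ is a local minimizer with witness ball $B(x,\delta)$ and some $y\in B(x,\delta)\setminus\{x\}$ satisfies $f(y)=f(x)$, then $y$ is automatically a local minimizer on $B(y,\delta-\|y-x\|)\subseteq B(x,\delta)$, so non-strictness contradicts isolation directly. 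Second, the paper's finiteness argument bypasses cell decomposition entirely: since $I=S$ is semi-algebraic and every point of $I$ is, by the very definition of isolated local minimizer, isolated in $L\supseteq I$, each connected component of $I$ is a singleton, and Proposition~\ref{prop1}(i) then gives finiteness. Your cell-decomposition route is sound but heavier than necessary.
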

\begin{proof}
	It is a consequence of Theorem \ref{th::Lip} and the following Propositions
	\ref{Claim1} and \ref{Claim2}. 
\end{proof}

\begin{proposition}\label{Claim1}
Let $X \subset \mathbb{R}^n$ be a semi-algebraic set and $f \colon X
\rightarrow \mathbb{R}$ be a semi-algebraic function. 
Then the set of $($strictly$)$ local minimizers of $f$ is semi-algebraic.
\end{proposition}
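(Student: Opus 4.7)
The plan is to write the set of local (resp.\ strictly local) minimizers of $f$ as a set defined by a first-order formula in the language of ordered fields, using only semi-algebraic ingredients, and then invoke the quantifier-elimination principle recorded in Remark~\ref{Remark1} (which is a consequence of Proposition~\ref{property of SA} and the Tarski--Seidenberg Theorem~\ref{Tarski Seidenberg Theorem}).

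First I would note that the graph $\Gamma:=\{(x,t)\in X\times\RR : t=f(x)\}$ is semi-algebraic by the definition of a semi-algebraic function, and that the auxiliary set $D:=\{(x,y)\in\RR^n\times\RR^n : \|x-y\|^2<\varepsilon^2\}$ (together with $\varepsilon$ as a parameter) is obviously semi-algebraic. Consequently, the ``neighbourhood comparison'' set
\[
	C:=\{(x^0,\varepsilon,y)\in X\times\RR_{>0}\times X : \|y-x^0\|<\varepsilon\ \text{and}\ f(y)<f(x^0)\}
\]
is semi-algebraic, since it is the intersection of semi-algebraic sets obtained from $\Gamma$, the norm and strict inequalities via Proposition~\ref{property of SA}.

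Next I would express the set of local minimizers as
\[
	L(f):=\{x^0\in X : \exists\,\varepsilon\in\RR_{>0},\ \forall\,y\in X,\ (x^0,\varepsilon,y)\notin C\}.
\]
This is precisely the kind of set built from semi-algebraic data by a finite chain of quantifiers considered in Remark~\ref{Remark1}, so $L(f)$ is semi-algebraic. For strictly local minimizers I would use the analogous formula with the set
\[
	C':=\{(x^0,\varepsilon,y)\in X\times\RR_{>0}\times X : 0<\|y-x^0\|<\varepsilon\ \text{and}\ f(y)\le f(x^0)\}
\]
in place of $C$, yielding
\[
	L_s(f):=\{x^0\in X : \exists\,\varepsilon\in\RR_{>0},\ \forall\,y\in X,\ (x^0,\varepsilon,y)\notin C'\},
\]
which is again semi-algebraic by the same reasoning.

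There is essentially no hard step here: once $f$ is semi-algebraic, the only thing one must check carefully is that the defining formulas really are first-order over the semi-algebraic structure, and this is immediate because the conditions $\|y-x^0\|<\varepsilon$, $f(y)<f(x^0)$, $\varepsilon>0$ are polynomial (in)equalities in the relevant variables, and the required quantifiers range over semi-algebraic parameter sets. The subtlest point, and the one I would spell out, is that the projection/complement combination used in Remark~\ref{Remark1} applies uniformly to produce $L(f)$ and $L_s(f)$; everything else is bookkeeping.
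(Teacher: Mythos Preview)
Your proposal is correct and follows essentially the same approach as the paper: both build the auxiliary semi-algebraic set of triples $(x,\varepsilon,y)$ witnessing the failure of local minimality and then peel off the quantifiers using the Tarski--Seidenberg projection theorem (the paper does this in two explicit steps via intermediate sets $D\to C\to B\to A$, while you invoke Remark~\ref{Remark1} to handle the $\exists\varepsilon\,\forall y$ chain in one stroke). The only cosmetic difference is that the paper keeps the condition $0<\|y-x\|$ in its set $D$, whereas you drop it in $C$; this is harmless since $f(y)<f(x^0)$ already excludes $y=x^0$.
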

\begin{proof}
	We only prove the statement for local minimizers and similar
	arguments hold for strictly local minimizers.

Let $A$ be the set of local minimizers of $f.$ By definition, we can write
\begin{eqnarray*}
A &=& \{x \in X \colon \exists \delta > 0 \textrm{ such that } f(y) \ge
f(x) \textrm{ for all } y \in X, 0 < \|y - x\| < \delta\}.
\end{eqnarray*}
Clearly, $A = \pi_1(B)$ where $\pi_1 \colon \mathbb{R}^n \times
\mathbb{R} \rightarrow \mathbb{R}^n, (x, \delta) \mapsto x,$ and 
\begin{eqnarray*}
B &:=& \{(x, \delta) \in X \times \mathbb{R}_{> 0} \colon f(y) \ge f(x)
\textrm{ for all } y \in X, 0 < \|y - x\| < \delta\}.
\end{eqnarray*}
Let $C := X \times \mathbb{R}_{> 0} \setminus B.$ We can write
\begin{eqnarray*}
C &:=& \{(x, \delta) \in X \times \mathbb{R}_{> 0} \colon \exists y \in
X, 0 < \|y - x\| < \delta, f(y) < f(x)\}.
\end{eqnarray*}
Clearly, $C = \pi_2(D)$ where $\pi_2 \colon \mathbb{R}^n \times
\mathbb{R} \times \mathbb{R}^n \rightarrow \mathbb{R}^n \times
\mathbb{R}, (x, \delta, y) \mapsto (x, \delta),$ and 
\begin{eqnarray*}
D &:=& \{(x, \delta, y) \in X \times \mathbb{R}_{> 0} \times X \colon 0
< \|y - x\|, \|y - x\| - \delta < 0, f(y) - f(x) < 0\}.
\end{eqnarray*}
Note that the sets $X \times \mathbb{R}_{> 0}$ and $X \times
\mathbb{R}_{> 0} \times X$ are semi-algebraic (see
Proposition~\ref{property of SA}(iii)), and the functions $(x, \delta, y)
\mapsto \|y - x\|, (x, \delta, y) \mapsto \|y - x \| - \delta,$ and
$(x, \delta, y) \mapsto f(y) - f(x)$ are semi-algebraic. Hence $D$ is
a semi-algebraic set. By the Tarski--Seidenberg Theorem, $C =
\pi_2(D)$ is a semi-algebraic set. By Proposition~\ref{property of
SA}(iii), $B = X \times \mathbb{R}_{> 0} \setminus C$ is a
semi-algebraic set. By the Tarski--Seidenberg Theorem again, 
$A = \pi_1(B)$ is a semi-algebraic set. 
\end{proof}


\begin{proposition}[compare \cite{Absil2006,Cui2018-2,Cui2018-1}] \label{Claim2}
Let $X \subset \mathbb{R}^n$ be a semi-algebraic set and $f \colon X
\rightarrow \mathbb{R}$ be a semi-algebraic function
which is continuous around a point $\bar{x} \in X.$ Then, $\bar{x}$ is
an isolated local minimizer of $f$ if and only if $\bar{x}$ is a
strictly local minimizer of $f.$ Consequently$,$ the set of isolated
$($strictly$)$ local minimizers is finite.
%
\end{proposition}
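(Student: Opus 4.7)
The plan is to invoke the semi-algebraic hypothesis only where it is really needed, through the Curve Selection Lemma (Lemma~\ref{CurveSelectionLemma}) and the Monotonicity Lemma (Lemma~\ref{MonotonicityLemma}). One implication is essentially topological and works for any continuous function, while the reverse implication is where the semi-algebraic structure is essential; that is the step I expect to be the main obstacle.

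The direction \emph{isolated $\Rightarrow$ strict} I would handle by a direct neighborhood argument. Assume $\bar{x}$ is a local minimizer with $f(y) \geq f(\bar{x})$ on some ball $B(\bar{x}, r) \cap X$. If $\bar{x}$ were not strict, I could produce a sequence $x_k \to \bar{x}$ with $x_k \neq \bar{x}$ and $f(x_k) \leq f(\bar{x})$, forcing $f(x_k) = f(\bar{x})$ by local minimality. For $k$ large enough $\|x_k - \bar{x}\| < r/2$, so $B(x_k, r/2) \subseteq B(\bar{x}, r)$ and hence $f \geq f(\bar{x}) = f(x_k)$ on $B(x_k, r/2) \cap X$, making each $x_k$ itself a local minimizer. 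This contradicts isolation of $\bar{x}$ in the set of local minimizers.

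For the reverse direction \emph{strict $\Rightarrow$ isolated}, let $M$ denote the set of local minimizers of $f$, which is semi-algebraic by Proposition~\ref{Claim1}. Suppose $\bar{x}$ is strict but not isolated in $M$. Then $\bar{x} \in \overline{M \setminus \{\bar{x}\}} \setminus (M \setminus \{\bar{x}\})$, and the Curve Selection Lemma supplies a real-analytic semi-algebraic curve $\phi \colon (-\epsilon, \epsilon) \to \mathbb{R}^n$ with $\phi(0) = \bar{x}$ and $\phi(t) \in M \setminus \{\bar{x}\}$ for $t \in (0, \epsilon)$. The composition $g(t) := f(\phi(t))$ is continuous and semi-algebraic; strictness of $\bar{x}$ combined with $\phi(t) \to \bar{x}$ and $\phi(t) \neq \bar{x}$ gives $g(t) > g(0) = f(\bar{x})$ for small $t > 0$. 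I would then apply the Monotonicity Lemma on compact subintervals to conclude that $g$ is either constant or strictly monotone on some $(0, \delta)$. The constant case contradicts $g(t) > g(0)$, and the strictly decreasing case is incompatible with $g(t) \to g(0)$ as $t \to 0^+$. In the strictly increasing case, for any $0 < s < t < \delta$ we have $f(\phi(s)) < f(\phi(t))$, while continuity of $\phi$ lets me take $s$ arbitrarily close to $t$, producing points near $\phi(t)$ with strictly smaller value—contradicting the fact that $\phi(t) \in M$. Ruling out this last case cleanly, by exploiting continuity of $\phi$ together with local-minimality of $\phi(t)$, is the delicate step.

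Finally, the finiteness consequence drops out at once: the set of isolated (equivalently, strict) local minimizers is semi-algebraic by Proposition~\ref{Claim1} and consists entirely of isolated points by the equivalence just proved, so it is a zero-dimensional semi-algebraic set. The Cell Decomposition Theorem (Theorem~\ref{CellTheorem}) then realizes it as a finite disjoint union of zero-dimensional cells, i.e., a finite set of points.
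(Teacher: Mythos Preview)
Your proof is correct and follows the same route as the paper: Curve Selection Lemma plus Monotonicity Lemma for the nontrivial direction, and semi-algebraicity plus discreteness for finiteness. The paper streamlines your three-case analysis by observing directly that each $\phi(t)$ being a local minimizer of $f$ forces each $t\in(0,\epsilon)$ to be a local minimizer of $g=f\circ\phi$, which is impossible if $g$ is strictly monotone---this dispatches both monotone cases at once, and the constant case then contradicts strictness exactly as you argue.
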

\begin{proof}
%
	An isolated local minimizer of $f$ is clearly a strictly local
	minimizer. Now we prove the other direction. 
Suppose to the contrary that there exists a
sequence $x^{(k)} \in X, k \in \mathbb{N},$ with $x^{(k)} \ne \bar{x}$ and
$x^{(k)} \to \bar{x}$ such that for each $k,$ $x^{(k)}$ is a local minimizer
 of $f.$ Let $A$ be the set of local minimizers of $f.$ Then, by
 Proposition~\ref{Claim1}, we can see that $A$ is a semi-algebraic set. 
By the Curve Selection Lemma~\ref{CurveSelectionLemma} there exists an
analytic semi-algebraic curve $\phi \colon (-\epsilon, \epsilon) \to
\mathbb{R}^n$ such that $\phi(0) = \bar{x}$ and $\phi(t) \in A$ for
all $t \in (0, \epsilon).$ By the Monotonicity Lemma~\ref{MonotonicityLemma}, we can
assume that the semi-algebraic function $\psi \colon [0,
	\epsilon) \to \mathbb{R}, t \mapsto f \circ \phi (t),$ is analytic
	on $(0, \epsilon)$ and  is either constant, or strictly increasing
	or strictly decreasing. Note that for each $t \in (0, \epsilon),$
	we have $t$ is a local minimizer of $\psi.$ Hence $\psi$ is
	constant on $(0, \epsilon).$ Since $f$ is continuous at $\bar{x},$
	the function $\psi$ is continuous at $t = 0.$ Consequently, 
$$f \circ \phi (t) = f \circ \phi (0) = f(\bar{x})$$ 
for all $t \in [0, \epsilon) ,$ which contradicts our assumption that
	$\bar{x}$ is a strictly local minimizer.

Since a semi-algebraic set has finitely many connected components, by
Proposition \ref{Claim1}, the set of isolated (strictly) local
minimizers of $f$ is finite.
\end{proof}

In the rest of this paper, we mainly study the set of continuous selections of
polynomial functions, i.e., $X=\RR^n$ and $f_1,\ldots,f_r\in\RR[x]$.
We also call such functions CSP functions, for short, selected from
$\{f_1,\ldots,f_r\}$. 
An example of CSP functions is the so-called piecewise
linear-quadratic function. Precisely, if for each $i=1,\ldots,r$,
$f_i(x)$ is quadratic and the piece $\{x\in\RR^n : f(x)=f_i(x)\}$ is a
polyhedron, then the element in $\mc(f_1,\ldots,f_r)$ is called the
piecewise linear-quadratic function, which is investigated in
\cite{Cui2018-1}.

By Proposition \ref{prop::Lip}, a CSP function $f$ is locally
Lipschitz. Therefore, the Clarke subdifferential
and non-smooth slope of a CSP function $f$ at any $x\in\mathbb{R}^n$ are of the forms
\eqref{eq::clarkesub} and \eqref{eq::mf}, respectively. 

\begin{proposition}\label{prop::cri}
	For any $f_1,\ldots,f_r\in\RR[x]$ with degrees bounded by $d,$ the
	set $\cri(\mc(f_1,\ldots,f_r))$ is semi-algebraic and has at most 
	\begin{equation}\label{eq::B0}
	B_0(n,d,r):=(d+1)(2d+1)^{n+r-1}+\sum_{s=1}^{r-1}\binom{r}{s}\left[(r-s)(d+1)+1\right]\left[2(r-s)(d+1)+1\right]^{n+s}
\end{equation}
connected components.
\end{proposition}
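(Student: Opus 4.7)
The plan is to stratify $\cri(\mc(f_1,\ldots,f_r))$ by subsets $J \subseteq \{1,\ldots,r\}$ that record which selection functions attain the common value at a critical point and whose gradients witness the Clarke condition, and then apply Theorem~\ref{th::num} to each stratum after lifting it to a system of polynomial (in)equalities. First I would prove the pointwise characterization: $x^0 \in \cri(\mc(f_1,\ldots,f_r))$ if and only if there is a non-empty $J \subseteq \{1,\ldots,r\}$ with $f_i(x^0) = f_j(x^0)$ for all $i,j \in J$ and $0 \in \co\{\nabla f_i(x^0) : i \in J\}$. The forward direction takes $J := I(f, x^0)$ for any witnessing CSP function $f$ and invokes \eqref{eq::clarkesub}. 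For the converse, choose any $i_0 \in J$; the CSP function $f_{i_0}$ satisfies $I(f_{i_0}, x^0) \supseteq J$, so by \eqref{eq::clarkesub} again $0 \in \co\{\nabla f_i(x^0) : i \in J\} \subseteq \partial^{\circ} f_{i_0}(x^0)$.

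For each non-empty $J \subseteq \{1,\ldots,r\}$, fix some $i_0 \in J$ and let $D_J \subseteq \RR^n$ consist of those $x$ satisfying $f_i(x) = f_j(x)$ for all $i,j \in J$, $f_k(x) \neq f_{i_0}(x)$ for all $k \notin J$ (vacuous when $J = \{1,\ldots,r\}$), and $0 \in \co\{\nabla f_i(x) : i \in J\}$. Each $D_J$ is semi-algebraic by Tarski--Seidenberg (the convex-hull condition is the projection of a semi-algebraic set in auxiliary multipliers), and the above characterization yields $\cri(\mc(f_1,\ldots,f_r)) = \bigcup_J D_J$, which is therefore semi-algebraic. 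To bound the number of connected components of $D_J$, I would substitute $\mu_i = \nu_i^2$ to convert the sign conditions on the multipliers into polynomial equations, and lift $D_J$ to the set $W_J \subseteq \RR^n \times \RR^{|J|}$ cut out by the equations $f_i(x) = f_j(x)$ for $i,j \in J$, $\sum_{i \in J} \nu_i^2 = 1$, and $\sum_{i \in J} \nu_i^2 \nabla f_i(x) = 0$, together with the strict inequalities $f_k(x) \neq f_{i_0}(x)$ for $k \notin J$. Then $D_J$ is the $x$-projection of $W_J$, so it has no more connected components than $W_J$.

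All defining polynomials of $W_J$ have degree at most $d+1$: the differences $f_i - f_j$ and $f_k - f_{i_0}$ have degree at most $d$, $\sum_{i \in J} \nu_i^2 - 1$ has degree $2$, and each component of $\sum_{i \in J} \nu_i^2 \nabla f_i(x)$ has degree at most $d+1$. Applying Theorem~\ref{th::num} in ambient dimension $n + |J|$ with degree bound $d + 1$ and $l = r - |J|$ strict inequalities yields the bound $(d+1)(2d+1)^{n+r-1}$ when $|J| = r$ (the $l = 0$ case) and $((r-s)(d+1) + 1)(2(r-s)(d+1)+1)^{n+s}$ when $|J| = s \in \{1,\ldots,r-1\}$. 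Summing over the single subset of size $r$ and the $\binom{r}{s}$ subsets of size $s$ for $s = 1,\ldots,r-1$ produces exactly $B_0(n,d,r)$. The main technical point is verifying the uniform degree bound $d + 1$ after the $\mu = \nu^2$ substitution and correctly tracking the lifted ambient dimension $n + |J|$; with these in hand, Theorem~\ref{th::num} does the rest.
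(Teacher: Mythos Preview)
Your proof is correct and follows essentially the same approach as the paper: the sets $W_J$ you define coincide with the paper's $\mathcal{C}_J$, the decomposition $\cri(\mc(f_1,\ldots,f_r))=\bigcup_J \pi_J(\mathcal{C}_J)$ is identical, and the application of Theorem~\ref{th::num} with ambient dimension $n+|J|$, degree bound $d+1$, and $l=r-|J|$ strict inequalities matches the paper's computation exactly. Your exposition is slightly more detailed in justifying the converse direction of the pointwise characterization (by taking the constant selection $f_{i_0}$), but the argument is the same.
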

\begin{proof}
	For any nonempty subset
	$J=\{j_1,\ldots,j_s\}\subseteq\{1,\ldots,r\}$, define 
	\begin{equation}\label{eq::cu}
		\mathcal{C}_J:=\left\{(x,\lambda)\in\RR^n\times\RR^{s} :
		\left\{
	\begin{aligned}
		&	\sum_{k=1}^s\lambda_k^2\nabla f_{j_k}(x)=0,\ \ 
		\sum_{k=1}^s\lambda_k^2=1,\\
		&f_{j_k}(x)-f_{j_1}(x)=0, \ k=2,\ldots,s,\\
		&f_i(x)-f_{j_1}(x)\neq 0,\ i\not\in J 
	\end{aligned}
	\right.
	\right\}.
\end{equation}
Let $\pi_J : \RR^n\times \RR^s\rightarrow \RR^n$ be the
projection on the first $n$ coordinates.
For any $x^0\in\cri(\mc(f_1,\ldots,f_r))$,
there exists a continuous selection
$f\in\mc(f_1,\ldots,f_r)$ such that $x^0$ is a critical point of $f$. Then,
$x^0\in\pi_{I(f,x^0)}(\mathcal{C}_{I(f,x^0)})$. Conversely, for
any $x^0\in\pi_J(\mathcal{C}_J)$ with nonempty subset
$J\subseteq\{1,\ldots,r\}$,
it is clear that $x^0$ is critical point of every  
$f\in\mc(f_1,\ldots,f_r)$ with $I(f,x^0)=J$. Therefore,  
\[
	\cri(\mc(f_1,\ldots,f_r))=\bigcup_{J} \pi_J(\mathcal{C}_J),
\]
where the union is taken over all nonempty subsets $J$ of
$\{1,\ldots,r\}$. Hence, by Proposition \ref{property of SA}, the set
$\cri(\mc(f_1,\ldots,f_r))$ is semi-algebraic. 
For each nonempty subset $J$ with $\# J=s$, by Theorem \ref{th::num}, the number
of connected component of $\mathcal{C}_J$ is bounded from above by 
\[
	\left\{
	\begin{aligned}
&(d+1)(2d+1)^{n+r-1}, & \text{if}\ s=r,\\
		&\left[(r-s)(d+1)+1\right]\left[2(r-s)(d+1)+1\right]^{n+s},&
		\text{if}\ s<r.\\
	\end{aligned}
	\right.
\]
Then, the conclusion follows. 
\end{proof}
For arbitrary continuous functions $f_1,\ldots,f_r: \RR^n\rightarrow\RR$, 
the above Proposition~\ref{prop::cri} will no longer hold. In
particular, the set $\cri(\mc(f_1,\ldots,f_r))$ does not
necessarily have finitely many connected components. For example,
consider the set $\mc(\sin x,\cos x,\RR)$.

\section{Genericity for continuous selections of polynomial
functions}\label{sec::genericity}

For $f_1,\ldots,f_r\in\RR[x]$, some generic properties about the
set $\cri(\mc(f_1,\ldots,f_r))$ will be established in this section.

For any positive integers $n$ and $d$, let
\[
	n(d):=\#\{\alpha:=(\alpha_1,\ldots,\alpha_n)\in\N^n : |\alpha|\le
	d\}, 
\]
where $|\alpha|:=\alpha_1+\cdots+\alpha_n$. 
Corresponding to the set of lexicographically ordered monomials
$x^\alpha, |\alpha|\le d$, we define for the variables
$x=(x_1,\ldots,x_n)$ a $n(d)$-component vector
\[
	\ve(x):=(1, x_1, \ldots, x_n, x_1^2, x_1x_2, \ldots, x_1x_n, \ldots, x_1^d, \ldots, x_n^d)^T, 
\]
which is known as the canonical basis of $\RR[x]$ with degree at most $d.$
For each parameter $u:=(u^{(1)},\ldots,u^{(r)})\in\RR^{r\times n(d)}$
where $u^{(i)}:=(u^{(i)}_\alpha)_{|\alpha|\le d}\in\RR^{n(d)}$, let
$f_i(x,u^{(i)})=\ve(x)^Tu^{(i)}$, $i=1,\ldots,r$, and
$F(x,u):=(f_1(x,u^{(1)}),\ldots,f_r(x,u^{(r)}))$. 

In this section, as $u^{(1)},\ldots,u^{(r)}$ are sometimes treated as
variables, we denote by $\nabla_x f_i(x, u^{(i)})$ the gradient
(column) vector of $f_i(x, u^{(i)})$ with respect to the variables
$x_1,\ldots,x_n$.

\begin{proposition}\label{prop::u1}
	There exists an open and dense semi-algebraic set
	$\mathscr{U}_1$ in $\RR^{r\times n(d)}$ such that for any
	$u\in\mathscr{U}_1$ and any $f\in\mc(F(x,u))$, we have
	$\#I(f,x)\le n+1$ at any $x\in\RR^n$. 
\end{proposition}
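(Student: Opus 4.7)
The plan is to apply Sard's theorem with parameters (Theorem~\ref{SardTheorem}) to force that, generically in the coefficients, no collection of $n+2$ or more of the polynomials $f_{j_1}(\cdot,u^{(j_1)}),\ldots,f_{j_s}(\cdot,u^{(j_s)})$ can simultaneously agree at a common point of $\RR^n$. Once this is established, $\#I(f,x)\le n+1$ follows at every $x$ for every $f\in\mc(F(x,u))$, because $I(f,x)$ consists exactly of the indices $i$ with $f_i(x,u^{(i)})=f(x)$, and any two such values are equal. Note that if $r\le n+1$ there is nothing to prove, so we may assume $r\ge n+2$.

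For every subset $J=\{j_1,\ldots,j_s\}\subseteq\{1,\ldots,r\}$ with $s=n+2$, consider the semi-algebraic $C^\infty$ map
\[
	G_J\colon \RR^n\times\RR^{r\times n(d)}\to \RR^{s-1},\qquad
	G_J(x,u):=\bigl(f_{j_k}(x,u^{(j_k)})-f_{j_1}(x,u^{(j_1)})\bigr)_{k=2,\ldots,s}.
\]
I would show first that $0\in\RR^{s-1}$ is a regular value of $G_J$. Indeed, taking partial derivatives with respect to the constant coefficients $u^{(j_k)}_0$ for $k=2,\ldots,s$, and using $\partial_{u^{(i)}_0}f_i=1$, $\partial_{u^{(i)}_0}f_l=0$ for $l\neq i$, the corresponding $(s-1)\times(s-1)$ minor of the full Jacobian $D G_J$ equals the identity matrix. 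Hence $DG_J(x,u)$ is surjective at \emph{every} point $(x,u)$, so $0$ is a regular value.

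By Theorem~\ref{SardTheorem}, there exists an open and dense semi-algebraic set $\mathscr{U}_J\subseteq\RR^{r\times n(d)}$ such that for every $u\in\mathscr{U}_J$ the value $0$ is a regular value of the restricted map $G_{J,u}\colon\RR^n\to\RR^{s-1}$, $x\mapsto G_J(x,u)$. Since $s-1=n+1>n=\dim\RR^n$, no linear map $\RR^n\to\RR^{s-1}$ is surjective, so regularity forces $G_{J,u}^{-1}(0)=\emptyset$. That is, for $u\in\mathscr{U}_J$ there is no $x\in\RR^n$ with $f_{j_1}(x,u^{(j_1)})=\cdots=f_{j_s}(x,u^{(j_s)})$.

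Finally, set
\[
	\mathscr{U}_1\;:=\;\bigcap_{\substack{J\subseteq\{1,\ldots,r\}\\ \#J=n+2}}\mathscr{U}_J.
\]
This is a finite intersection of open dense semi-algebraic sets, hence open, dense and semi-algebraic in $\RR^{r\times n(d)}$. For any $u\in\mathscr{U}_1$, any $x\in\RR^n$ and any $f\in\mc(F(x,u))$, if $\#I(f,x)\ge n+2$ then picking any $J\subseteq I(f,x)$ with $\#J=n+2$ would give a common value $f(x)$ of the polynomials indexed by $J$, contradicting $u\in\mathscr{U}_J$. Therefore $\#I(f,x)\le n+1$, as required. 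The only potentially delicate point is the regular-value verification for $G_J$, but it is disposed of at once by exploiting the constant coefficients as independent parameters; no subtle transversality argument is needed.
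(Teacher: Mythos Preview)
Your argument is correct, but it takes a different route from the paper's own proof. The paper handles each subset $J=\{j_1,\ldots,j_{n+2}\}$ algebraically, via the resultant
\[
R_J(u^J)=\res\bigl(f_{j_2}(x,u^{(j_2)})-f_{j_1}(x,u^{(j_1)}),\ \ldots,\ f_{j_{n+2}}(x,u^{(j_{n+2}})})-f_{j_1}(x,u^{(j_1)})\bigr),
\]
and sets $\mathscr{U}_1=\bigcap_{\#J=n+2}\{u:R_J(u^J)\neq 0\}$. This gives $\mathscr{U}_1$ explicitly as the complement of a finite union of hypersurfaces, which is slightly more constructive. Your approach instead uses Sard's theorem with parameter (Theorem~\ref{SardTheorem}) together with the trivial surjectivity obtained by differentiating in the constant coefficients $u^{(j_k)}_0$. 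This avoids any appeal to resultant theory and is entirely self-contained within the transversality machinery the paper already employs for Propositions~\ref{prop::u2} and Theorem~\ref{th::u3}; in that sense your proof is more uniform with the rest of Section~\ref{sec::genericity}. Both methods yield an open dense semi-algebraic $\mathscr{U}_1$ with the same defining property, so nothing downstream is affected by the choice.
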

\begin{proof}
	Fix a subset $J:=\{j_1,\ldots,j_{n+2}\}\subseteq\{1,\ldots,r\}$
	with $j_1<\cdots<j_{n+2}$. Let
	$u^J:=(u^{(j_1)},\ldots,u^{(j_{n+2})})\in\RR^{(n+2)\times n(d)}$
	for any $u\in\RR^{r\times n(d)}$. Define
	\[
		R_J(u^J):=\res(f_{j_2}(x,u^{(j_2)})-f_{j_1}(x,u^{(j_1)}),\ldots,
		f_{j_{n+2}}(x,u^{(j_{n+2})})-f_{j_1}(x,u^{(j_1)}))\in\RR[u^J]\subset\RR[u],
	\]
	where $\res(\cdot,\ldots,\cdot)$ denotes the resultant of
	polynomials with respect to the variables $x$. Then, for any
	$u\in\RR^{r\times n(d)}$ with $R_J(u^J)\neq 0$, the polynomial system 
	\[
f_{j_2}(x,u^{(j_2)})-f_{j_1}(x,u^{(j_1)})=\cdots=f_{j_{n+2}}(x,u^{(j_{n+2})})-f_{j_1}(x,u^{(j_1)})=0
	\]
has no solutions in $\RR^n$. Let
\[
	\mathscr{U}_1:=\bigcap_{ {J\subseteq
		\{1,\ldots,r\}}\atop{\#J=n+2}}\{u\in\RR^{r\times n(d)} : R_J(u^J)\neq
	0\},
\]
which is an open and dense semi-algebraic set in $\RR^{r\times n(d)}$.
Clearly, for any
	$u\in\mathscr{U}_1$ and any $f\in\mc(F(x,u))$, 
	$\#I(f,x)\le n+1$ at any $x\in\RR^n$.
\end{proof}

\begin{proposition}\label{prop::u2}
	There exists an open and dense semi-algebraic set
	$\mathscr{U}_2$ in $\RR^{r\times n(d)}$ such that for any
	$u\in\mathscr{U}_2$ and any $f\in\mc(F(x,u)),$ the vectors
	$\nabla f_i(x^0), i\in I(f,x^0)$ are affinely independent for any
	critical point $x^0$ of $f,$ i.e.$,$ for every $i\in I(f,x^0),$ the
	vectors $\nabla_x f_{i'}(x^0,u^{(i')})-\nabla_x f_i(x^0,u^{(i)}),\ 
	i'\in I(f,x^0)\setminus\{i\},$ are linearly independent. Consequently$,$
	the tuple $(\mu_i, i\in I(f,x^0))$ satisfying \eqref{eq::lambda}
	is unique.
\end{proposition}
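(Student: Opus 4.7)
The plan is to extend the strategy of Proposition~\ref{prop::u1} and apply Sard's theorem with parameter (Theorem~\ref{SardTheorem}) in order to rule out, for each potential active index set, the simultaneous occurrence of common function values and affine dependence of the active gradients. By Proposition~\ref{prop::u1} it suffices to consider subsets $J = \{j_1 < \cdots < j_s\} \subseteq \{1, \ldots, r\}$ with $2 \leq s \leq n+1$. For each such $J$, I introduce an auxiliary variable $\beta = (\beta_2, \ldots, \beta_s) \in \RR^{s-1} \setminus \{0\}$ (to be thought of as an affine-dependence coefficient $(\beta_1, \beta_2, \ldots, \beta_s)$ with $\beta_1 := -\sum_{k=2}^{s} \beta_k$) and define the polynomial map
\begin{align*}
\Phi_J \colon \RR^n \times \bigl(\RR^{s-1} \setminus \{0\}\bigr) \times \RR^{r \times n(d)} &\longrightarrow \RR^{s-1} \times \RR^n, \\
(x, \beta, u) &\longmapsto \Bigl(\bigl(f_{j_k}(x,u^{(j_k)}) - f_{j_1}(x,u^{(j_1)})\bigr)_{k=2}^{s},\ {\textstyle\sum_{k=2}^{s}}\, \beta_k \bigl(\nabla_x f_{j_k}(x,u^{(j_k)}) - \nabla_x f_{j_1}(x,u^{(j_1)})\bigr)\Bigr).
\end{align*}
A zero of $\Phi_J$ at fixed $u$ corresponds precisely to a point $x$ at which $f_{j_1}, \ldots, f_{j_s}$ take a common value and their gradients are affinely dependent.

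I then verify that $0$ is a regular value of $\Phi_J$ using only perturbations in the $u$-variable. At any $(x^*, \beta^*, u^*) \in \Phi_J^{-1}(0)$, there exists $k^* \in \{2, \ldots, s\}$ with $\beta_{k^*}^* \neq 0$ because $\beta^* \neq 0$. Perturbing the constant coefficient of $f_{j_k}(\cdot, u^{(j_k)})$ for $k = 2, \ldots, s$ contributes the vector $(\mathbf{e}_{k-1}, 0) \in \RR^{s-1} \times \RR^n$ to the image of $D\Phi_J$, while perturbing the coefficient of $x_i$ in $f_{j_{k^*}}(\cdot, u^{(j_{k^*})})$ for $i = 1, \ldots, n$ contributes $(x_i^* \mathbf{e}_{k^*-1},\, \beta_{k^*}^* \mathbf{e}_i)$. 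Subtracting $x_i^* (\mathbf{e}_{k^*-1}, 0)$ from the latter leaves $(0, \beta_{k^*}^* \mathbf{e}_i)$, and these vectors together with the former span $\RR^{s-1} \times \RR^n$ since $\beta_{k^*}^* \neq 0$. Hence $D\Phi_J$ is surjective along $\Phi_J^{-1}(0)$, and Theorem~\ref{SardTheorem} furnishes an open dense semi-algebraic set $\mathscr{U}_J \subseteq \RR^{r \times n(d)}$ on which $0$ is a regular value of $\Phi_J(\cdot, \cdot, u)$.

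The domain $\RR^n \times (\RR^{s-1} \setminus \{0\})$ and the codomain $\RR^{s-1} \times \RR^n$ both have dimension $n + s - 1$, so for $u \in \mathscr{U}_J$ the preimage $\Phi_J(\cdot, \cdot, u)^{-1}(0)$ is either empty or a $0$-dimensional submanifold; but $\Phi_J(\cdot, \cdot, u)$ is homogeneous of degree one in $\beta$, so any zero generates a punctured line of zeros under the scaling $\beta \mapsto c\beta$ $(c \neq 0)$, forcing $\Phi_J(\cdot, \cdot, u)^{-1}(0) = \emptyset$. I then set $\mathscr{U}_2 := \mathscr{U}_1 \cap \bigcap_J \mathscr{U}_J$, a finite intersection of open dense semi-algebraic sets, which is itself open, dense, and semi-algebraic. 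For $u \in \mathscr{U}_2$ and any critical point $x^0$ of any $f \in \mc(F(x, u))$, a putative affine dependence among the active gradients would yield a nontrivial $\beta \in \RR^{s-1} \setminus \{0\}$ with $\Phi_{I(f, x^0)}(x^0, \beta, u) = 0$, contradicting emptiness of the preimage. The uniqueness of the multiplier tuple in \eqref{eq::lambda} is then immediate, since the difference of two such tuples is a nontrivial affine dependence. I anticipate the main obstacle to be the surjectivity verification for $D\Phi_J$; everything else is a standard dimension count combined with the homogeneity/scaling invariance.
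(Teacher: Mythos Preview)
Your argument is correct, but it takes a different route from the paper. The paper proceeds purely algebraically: for each $J=\{j_1,\ldots,j_s\}$ with $2\le s\le n+1$ it defines a polynomial $\Delta_J(u^J)$ in the coefficients~$u$ (the discriminant of the system $f_{j_2}-f_{j_1}=\cdots=f_{j_s}-f_{j_1}=0$, or a sum of squares of maximal minors of the Jacobian in the low-degree case) whose non-vanishing forces the gradient differences to be linearly independent at every common zero; then $\mathscr{U}_2:=\mathscr{U}_1\cap\bigcap_J\{\Delta_J\neq 0\}$ is manifestly the complement of a hypersurface, hence open, dense, and semi-algebraic. Your approach instead uses Sard's theorem with parameter together with the homogeneity of $\Phi_J$ in $\beta$ to force emptiness of the bad locus via a dimension count. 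This is the same transversality machinery the paper deploys only later (in Theorems~\ref{th::u3}--\ref{th::u5}), so your method is more uniform with the rest of the paper and avoids importing discriminant theory; the price is that your $\mathscr{U}_J$ is given only existentially by Theorem~\ref{SardTheorem}, whereas the paper's description of $\mathscr{U}_2$ is completely explicit as the non-vanishing locus of a concrete polynomial in~$u$.
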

\begin{proof}
	Fix a subset $J:=\{j_1,\ldots,j_s\}\subseteq\{1,\ldots,r\}$	with
	$2\le s\le n+1$ and $j_1<\cdots<j_s$. Let
	$u^J:=(u^{(j_1)},\ldots,u^{(j_s)})\in\RR^{s\times n(d)}$
	for any $u\in\RR^{r\times n(d)}$. Now we define a polynomial
	$\Delta_J(u^J)\in\RR[u^J]\subset\RR[u]$ in the following way. If
	$\deg_x(f_{j_k}(x,u^{(j_k)})-f_{j_1}(x,u^{(j_1)}))\le 1$ for all
	$k=2,\ldots,s$, let $\Delta_J(u^J)$ be the sum of squares of all
	the maximal minors of the Jacobian matrix of
	$f_{j_k}(x,u^{(j_k)})-f_{j_1}(x,u^{(j_1)}), k=2,\ldots,s$, with
	respect to the variables $x$; otherwise, let 
	\[
		\Delta_J(u^J):=\Delta(f_{j_2}(x,u^{(j_2)})-f_{j_1}(x,u^{(j_1)}),\ldots,
		f_{j_s}(x,u^{(j_s)})-f_{j_1}(x,u^{(j_1)}))\in\RR[u^J]\subset\RR[u],
	\]
	where $\Delta(\cdot,\ldots,\cdot)$ denotes the discriminant of
	polynomials with respect to the variables $x$. Then, for any
	$u\in\RR^{r\times n(d)}$ with $\Delta_J(u^J)\neq 0$, the polynomial system 
	\[
		f_{j_2}(x,u^{(j_2)})-f_{j_1}(x,u^{(j_1)})=\cdots=f_{j_s}(x,u^{(j_s)})-f_{j_1}(x,u^{(j_1)})=0
	\]
has no solutions $x\in\RR^n$ such that the
vectors 
	\[
		\nabla_x(f_{j_2}(x,u^{(j_2)})-f_{j_1}(x,u^{(j_1)})),\ldots,
		\nabla_x(f_{j_s}(x,u^{(j_s)})-f_{j_1}(x,u^{(j_1)})),
	\]
	are linearly dependent. Let 
\[
	\mathscr{U}_2:=\mathscr{U}_1\cap\left(\bigcap_{ {J\subseteq
		\{1,\ldots,r\}}\atop{2\le \#J\le n+1}}\{u\in\RR^{r\times n(d)} :
	\Delta_J(u^J)\neq 0\}\right),
\]
where  $\mathscr{U}_1$ is the open and dense semi-algebraic set in
$\RR^{r\times n(d)}$ in Proposition \ref{prop::u1}.
Clearly, $\mathscr{U}_2$ is an open and dense semi-algebraic set in
$\RR^{r\times n(d)}$.
It is straightforward to verify that for any
	$u\in\mathscr{U}_2$ and any $f\in\mc(F(x,u))$, the vectors
	$\nabla f_i(x^0), i\in I(f,x^0)$ are affinely independent for any
	critical point $x^0$ of $f$.

	If there are two tuples $(\mu_i, i\in I(f,x^0))$ and $(\eta_i,
	i\in I(f,x^0))$ satisfying \eqref{eq::lambda}, then for any $i\in
	I(f,x^0)$,
	\[
		\begin{aligned}
			&\nabla_x f_i(x^0,u^{(i)})+\sum_{{i'\in I(f,x^0)}\atop{i'\neq
			i}}\mu_{i'}\left(\nabla_x f_{i'}(x^0,u^{(i')})-\nabla_x
			f_i(x^0,u^{(i)})\right)=0,\\
			&\nabla_x f_i(x^0,u^{(i)})+\sum_{{i'\in I(f,x^0)}\atop{i'\neq
			i}}\eta_{i'}\left(\nabla_x f_{i'}(x^0,u^{(i')})-\nabla_x
			f_i(x^0,u^{(i)})\right)=0.
		\end{aligned}
	\]
We have 
\[
	\sum_{{i'\in I(f,x^0)}\atop{i'\neq i}}(\mu_{i'}-\eta_{i'})\left(\nabla_x f_{i'}(x^0,u^{(i')})-\nabla_x
			f_i(x^0,u^{(i)})\right)=0.
\]
Due to the linear independency, $\mu_{i'}=\eta_{i'}$ for all $i'\in I(f,x^0), i'\neq i$ and
clearly $\mu_i=\eta_i$. 
\end{proof}
Recall the max-min type selections in \eqref{eq::maxmin}. For
continuous selections of polynomial functions with generic
coefficients, we have the following local max-min representation at
their critical points.
\begin{corollary}
Let $\mathscr{U}_2$ be the open and dense semi-algebraic set in
$\RR^{r\times n(d)}$ in Proposition \ref{prop::u2}. 
For any $u\in\mathscr{U}_2,$ $f\in\mc(F(x,u))$ and 
$x^0\in\cri(f),$ $f$ is locally representable as a max-min type
selection of the functions $f_i(x),\ i\in I(f,x^0)$. 
\end{corollary}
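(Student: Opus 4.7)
The plan is to localize $f$ to the active subfamily, reduce to an affine problem via a smooth change of coordinates, and then invoke the max-min representation for continuous selections of affine functions from \cite[Corollary 2.1]{BARTELS1995385}. First I would show that on some open neighborhood $U$ of $x^0$ the restriction $f|_U$ is a continuous selection of only the active subfamily $\{f_i : i\in I(f,x^0)\}$: for every inactive index $i\notin I(f,x^0)$ one has $f_i(x^0)\neq f(x^0)$, so by continuity of $f$ and $f_i$ there is an open neighborhood of $x^0$ on which $f_i(x)\neq f(x)$, and intersecting these finitely many neighborhoods yields $U$.

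Next, writing $I(f,x^0)=\{j_1,\ldots,j_s\}$ and setting $g_k(x):=f_{j_k}(x)-f_{j_1}(x)$ for $k=2,\ldots,s$, Proposition~\ref{prop::u2} ensures (since $u\in\mathscr{U}_2$ and $x^0\in\cri(f)$) that $\nabla g_2(x^0),\ldots,\nabla g_s(x^0)$ are linearly independent, so in particular $s-1\le n$. I would adjoin $n-s+1$ auxiliary linear functions $\ell_1,\ldots,\ell_{n-s+1}$ so that $\Phi(x):=(g_2(x),\ldots,g_s(x),\ell_1(x),\ldots,\ell_{n-s+1}(x))$ has a nonsingular Jacobian at $x^0$; by the inverse function theorem, after shrinking $U$ if necessary, $\Phi$ is a $C^{\infty}$ diffeomorphism from $U$ onto an open neighborhood $V$ of the origin in $\RR^n$.

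Defining $h(y):=f(\Phi^{-1}(y))-f_{j_1}(\Phi^{-1}(y))$ for $y\in V$, the identity $g_k\circ\Phi^{-1}(y)=y_{k-1}$ shows that $h$ is a continuous selection on $V$ of the affine family $a_1(y):=0$, $a_k(y):=y_{k-1}$ for $k=2,\ldots,s$. The hyperplane arrangement $\{a_k=a_{k'}\}_{k\neq k'}$ partitions $\RR^n$ into finitely many open polyhedral cones, each of which meets $V$ in a connected set on which $h$ must coincide with exactly one of the $a_k$'s, and continuity across shared walls forces consistency of these choices; consequently the local selection extends uniquely to a continuous selection of $\{a_1,\ldots,a_s\}$ on all of $\RR^n$. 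Applying \cite[Corollary 2.1]{BARTELS1995385} yields index subsets $K_1,\ldots,K_m\subseteq\{1,\ldots,s\}$ such that $h(y)=\max_{1\le i\le m}\min_{k\in K_i}a_k(y)$ for all $y\in V$. Pulling back by $\Phi$ and using that the common shift by $f_{j_1}(x)$ commutes with $\min$ and $\max$, this rewrites as $f(x)=\max_{1\le i\le m}\min_{k\in K_i}f_{j_k}(x)$ on $U$, giving the asserted local max-min representation in terms of $\{f_i : i\in I(f,x^0)\}$.

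The main obstacle will be the extension step in the third paragraph, namely verifying rigorously that the local selection data on $V$ assembles into a genuine global continuous selection of the affine family so that \cite[Corollary 2.1]{BARTELS1995385} is directly applicable. An alternative would be to adapt the proof of that corollary, which is essentially a cone-by-cone construction over the hyperplane arrangement, directly to the neighborhood $V$ and bypass the extension step altogether.
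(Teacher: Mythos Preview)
Your argument is essentially correct, but it takes a longer route than the paper. The paper's proof is a one-liner: it invokes \cite[Corollary~2.3]{BARTELS1995385} together with Proposition~\ref{prop::u2}. That corollary in Bartels--Kuntz--Scholtes is precisely the local max--min representation result for continuous selections of $C^1$ functions near a point where the active gradients are affinely independent, so once Proposition~\ref{prop::u2} supplies that affine independence at $x^0$, the conclusion is immediate.

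What you do instead is effectively \emph{re-derive} Corollary~2.3 from the weaker Corollary~2.1 (the global statement for affine selection functions): you straighten the differences $f_{j_k}-f_{j_1}$ to coordinate functions via the inverse function theorem, reduce to a continuous selection of linear functions on a neighborhood of the origin, extend that local selection to a global one using the conical structure of the central hyperplane arrangement, and then apply the global affine result. This is a perfectly valid and self-contained path, and your sketch of the extension step can be made rigorous (two linear functions that agree on a relatively open subset of a hyperplane through the origin agree on the whole hyperplane, which handles continuity across walls; lower-dimensional strata follow by induction). The upshot is that your proof is more elementary in the sense of relying only on the affine case of \cite{BARTELS1995385}, at the cost of reproducing work that Corollary~2.3 already packages. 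If you have access to Corollary~2.3, citing it directly as the paper does is cleaner; if not, your argument stands on its own once the extension step is written out carefully.
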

\begin{proof}
It is a direct consequence of \cite[Corollary 2.3]{BARTELS1995385} and
Proposition \ref{prop::u2}.
\end{proof}

The following generic properties hold for the set of critical points
of all CSP functions selected from the same set of finitely many
polynomials.

\begin{theorem}\label{th::u3}
	There exists an open and dense semi-algebraic set
	$\mathscr{U}_3$ in $\RR^{r\times n(d)}$ such that for any
	$u\in\mathscr{U}_3$, 
	the number of the points in $\cri(\mc(F(x,u)))$ is
			finite and bounded from above by $B_0(n,d,r)$ in
			\eqref{eq::B0}. 
			Moreover$,$ for any $u\in\mathscr{U}_3$, $f\in\mc(F(x,u))$
			and $x^0\in\cri(f),$
		\begin{enumerate}[\upshape (i)]
		\item  the
			strict complementarity holds for $x^0;$
		\item the system $\left(\sum_{i\in I(f,x^0)}\mu_i \nabla_x^2
			f_i(x^0,u^{(i)})\right)y=0$ where $\mu_i$ satisfying \eqref{eq::lambda} does
	not have a nonzero solution in the set
	\begin{equation}\label{eq::s1}
		\{y\in\RR^n : \nabla_x f_i(x^0, u^{(i)})^Ty=0, \ i\in
	I(f,x^0)\}. 
\end{equation}
	\end{enumerate}
\end{theorem}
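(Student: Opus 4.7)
The strategy is to construct $\mathscr{U}_3$ as a finite intersection refining $\mathscr{U}_2$ from Proposition \ref{prop::u2}, by adjoining for each non-empty subset $J=\{j_1,\ldots,j_s\}\subseteq\{1,\ldots,r\}$ with $1\le s\le n+1$ (this upper bound on $s$ being already enforced by $\mathscr{U}_2\subseteq\mathscr{U}_1$) further open dense semi-algebraic conditions obtained by applying Theorem \ref{SardTheorem} to suitable parametric maps. Following Proposition \ref{prop::cri}, a point $x^0$ is critical for some $f\in\mc(F(x,u))$ with active index set $J$ iff $(x^0,\mu)$ lies in the zero locus of
\[
\Phi_J(x,\mu,u):=\left(\sum_{k=1}^s\mu_k\nabla_x f_{j_k}(x,u^{(j_k)}),\ \sum_{k=1}^s\mu_k-1,\ \big(f_{j_k}(x,u^{(j_k)})-f_{j_1}(x,u^{(j_1)})\big)_{k=2}^s\right)
\]
together with $\mu_k\geq 0$; here $\Phi_J$ is a semi-algebraic $C^\infty$ map from $\RR^n\times\RR^s\times\RR^{r\times n(d)}$ to $\RR^{n+s}$.

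The key claim is that $0$ is a regular value of $\Phi_J$. This is checked by exhibiting explicit parameter directions that exhaust the target: perturbing constants of $u^{(j_k)}$, $k\ge 2$, shifts the $s-1$ value-difference coordinates; perturbing linear coefficients of $u^{(j_1)}$ shifts the $n$ gradient coordinates; and perturbing a further coefficient handles the normalization equation $\sum\mu_k=1$. A block-triangular argument in the three blocks (constant, linear, higher-order) of each $u^{(j_k)}$ yields surjectivity of the total derivative at every zero of $\Phi_J$. Theorem \ref{SardTheorem} then provides an open dense semi-algebraic set $\mathscr{U}_J^{\text{fin}}\subseteq\RR^{r\times n(d)}$ on which $\Phi_J(\cdot,\cdot,u)^{-1}(0)$ is $0$-dimensional, hence a finite set of isolated points, whose projection under $\pi_J$ is the contribution of $J$ to $\cri(\mc(F(x,u)))$. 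Taking the union over all $J$ yields finiteness of $\cri(\mc(F(x,u)))$, and since each critical point is then its own connected component, Proposition \ref{prop::cri} delivers $\#\cri(\mc(F(x,u)))\le B_0(n,d,r)$.

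For assertion (i), augment $\Phi_J$ by the additional equation $\mu_k=0$ for each $k\in\{1,\ldots,s\}$, obtaining a map $\Phi_{J,k}$ into $\RR^{n+s+1}$. The same parameter-perturbation argument shows $0$ is a regular value; its total zero locus is a semi-algebraic manifold of dimension $r\cdot n(d)-1$, whose projection to $\RR^{r\times n(d)}$ has empty interior, so the complement $\mathscr{U}_{J,k}$ is open, dense, and semi-algebraic. On $\mathscr{U}_{J,k}$ no critical point with active set $J$ has a vanishing $k$-th multiplier. For assertion (ii), augment further by a non-degenerate direction: define
\[
\Psi_J(x,\mu,y,u):=\left(\Phi_J(x,\mu,u),\ \Big(\sum_{k=1}^s\mu_k\nabla_x^2 f_{j_k}(x,u^{(j_k)})\Big)y,\ \big(\nabla_x f_{j_k}(x,u^{(j_k)})^T y\big)_{k=2}^s\right)
\]
on $\RR^n\times\RR^s\times S^{n-1}\times\RR^{r\times n(d)}$ into $\RR^{2n+2s-1}$, where the condition $\nabla_x f_{j_1}^T y=0$ is redundant once strict complementarity holds. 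Perturbing the quadratic coefficients of the $u^{(j_k)}$'s independently adjusts the Hessians at a fixed point without affecting the values or gradients there, giving surjectivity of the total derivative and thus regularity of $0$. A dimension count again produces an open dense semi-algebraic complement $\mathscr{U}_J^{(\text{ii})}$ on which the augmented system has no zero, i.e.\ the exceptional Hessian degeneracy in (ii) cannot occur.

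Setting
\[
\mathscr{U}_3:=\mathscr{U}_2\cap\bigcap_{J}\mathscr{U}_J^{\text{fin}}\cap\bigcap_{J,k}\mathscr{U}_{J,k}\cap\bigcap_J\mathscr{U}_J^{(\text{ii})}
\]
is a finite intersection, hence open, dense, and semi-algebraic in $\RR^{r\times n(d)}$. The main obstacle is the regular-value verification for $\Psi_J$: one must simultaneously realize constant, linear, and quadratic parameter perturbations on each $u^{(j_k)}$ that surject onto the augmented target without disrupting the first-order equations already assumed to hold. The block-triangular separation of perturbation directions by polynomial degree—together with the discriminant/resultant-based genericity already secured by $\mathscr{U}_2$ to guarantee affine independence of $\{\nabla f_i(x^0):i\in I(f,x^0)\}$—is the mechanism that keeps this verification manageable.
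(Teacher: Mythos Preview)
Your approach differs substantively from the paper's. The paper encodes the multipliers via squares, $\mu_k=\lambda_k^2$, works on the sphere $\sum_k\lambda_k^2=1$, and adds a Rabinowitz-type equation $z\cdot\prod_{i\notin J}(f_i-f_{j_1})-1=0$ to force the active set to be exactly $J$. With these choices a \emph{single} map $\Phi_J:\mathscr{U}_2\times\RR^n\times\Lambda_J\times\RR\to\RR^{n+s}$ does all the work: one application of Sard's theorem with parameter makes $0$ a regular value of $\Phi_{J,u}$, finiteness follows by equidimensionality, and both (i) and (ii) are read off from the structure of the square Jacobian $D_{(x,\lambda,z)}\Phi_{J,u}$ at a regular zero---a vanishing $\lambda_l$ or a Hessian-degenerate direction $y$ would each drop its rank below $n+s$. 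Your scheme instead stacks three separate maps and three Sard applications; this is more modular but also more fragile, and it misses the point that (i) and (ii) are really the same rank condition.

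There is a genuine gap in your treatment of (ii). You place $\Psi_J$ on the full parameter space $\RR^{r\times n(d)}$ and assert that $0$ is a regular value, but this fails. Take $s=2$, $x=0$, $f_{j_2}\equiv 0$, and any $f_{j_1}$ with $f_{j_1}(0)=0$ and $\nabla f_{j_1}(0)^Ty=0$; then $(0,(0,1),y,u)$ is a zero of $\Psi_J$ and the covector $(-y,0,0,0,1)$ annihilates every column of the full Jacobian. The obstruction is precisely that some $\mu_k$ vanish. The repair is to take the parameter domain of $\Psi_J$ to be the open dense set $\bigcap_{k}\mathscr{U}_{J,k}$ on which (i) already holds, so that every zero has all $\mu_k\neq 0$; your own parenthetical that ``$\nabla_x f_{j_1}^Ty=0$ is redundant once strict complementarity holds'' is a signal that (i) must be fed into (ii), but the domain you wrote does not do this. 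Two smaller slips: the normalization $\sum\mu_k=1$ does not involve $u$, so no coefficient perturbation can ``handle'' it (the $\mu$-derivatives do); and quadratic coefficients \emph{do} affect values and gradients at a general point $x$, so your block-triangular argument must compensate for those contributions rather than claim they are absent.
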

\begin{proof}
	Let $\mathscr{U}_2$ be the open and dense semi-algebraic set in
	$\RR^{r\times n(d)}$ in Proposition \ref{prop::u2}.

	Fix a subset $J=\{j_1,\ldots,j_s\}\subseteq\{1,\ldots,r\}$
	with $j_1<\cdots<j_s$ and $s\le n+1$. Let
	$u^J:=(u^{(j_1)},\ldots,u^{(j_s)})\in\RR^{s\times n(d)}$ for any
	$u\in\RR^{r\times n(d)}$,
	$\lambda^J:=(\lambda_{j_1},\ldots,\lambda_{j_s})\in\RR^s$,
	$z\in\RR$ and 
    \[
		\Lambda_J:=\{\lambda^J\in\RR^s :
			\lambda_{j_1}^2+\cdots+\lambda_{j_s}^2=1\}. 
	\]
	
	We first consider the case when $s>1$ and assume that
	$J=\{1,2,\ldots,s\}$ for notational simplicity. Define 
	the semi-algebraic map 
	\[
		\Phi_J : \mathscr{U}_2 \times \RR^n\times \Lambda_J\times \RR\rightarrow
	\RR^n\times\RR^{s-1}\times \RR
\]
by 
	\[
		\begin{aligned}
			\Phi_J(u,x,\lambda^J,z)=&\Big(\sum_{j=1}^s\lambda_j^2\nabla_x
			f_j(x,u^{(j)})^T, \ f_2(x,u^{(2)})-f_1(x,u^{(1)}),\
			\ldots,\ f_s(x,u^{(s)})-f_1(x,u^{(1)}),\\
			&z\cdot E(x,u)-1\Big),
		\end{aligned}
	\]
	where 
\[
	E(x,u):=\left\{
		\begin{aligned}
			&\prod_{j=s+1}^r(f_j(x,u^{(j)})-f_1(x,u^{(1)}))	, &
			\text{if}\ s<r,\\
			& 1 & \text{if}\ s=r.
		\end{aligned}
		\right.
\]
	Note that $\mathscr{U}_2 \times \RR^n\times \Lambda_J\times
	\RR$ is a semi-algebraic manifold of dimension $r\times
	n(d)+n+s$. Let $e_j\in\RR^{s-1}$ be the
	column vector with the $j$-th entry being $1$ and the others being
	$0$, $j=1,\ldots,s-1$, and $e=\sum_{j=1}^{s-1}e_j$. 
	A direct computation shows that
	\begin{equation}\label{eq::jac}
		\begin{aligned}
			&\left(
			\begin{array}{c|c|c|c|c|c}
				\frac{\partial \Phi_J}{\partial
					u^{(1)}_{\alpha}}&\frac{\partial \Phi_J}{\partial
						u^{(2)}_{\alpha}}&\cdots&\frac{\partial
						\Phi_J}{\partial u^{(s)}_{\alpha}}
						&\frac{\partial \Phi_J}{\partial x_i}&\frac{\partial
						\Phi_J}{\partial z}
					\end{array}
					\right)_{|\alpha|=1,\ i=1,\ldots,n}\\
					=&\left(
				\begin{array}{cccccc}
					\lambda_1^2I_n&\lambda_2^2I_n&\cdots&\lambda_s^2I_n&*&\mathbf{0}\\
					-e x^T&e_1 x^T&\cdots&e_{s-1}x^T&D&\mathbf{0}\\
					*&*&\cdots&*&*&E(x,u)
				\end{array}
				\right),
				\end{aligned}
			\end{equation}
	where $I_n$ denotes the identity matrix of order $n$, 
	\[
		D=\left(
						\begin{array}{c}
							\nabla_x f_2(x,u^{(2)})^T-\nabla_x
							f_1(x,u^{(1)})^T\\
							\vdots\\
							\nabla_x f_{s}(x,u^{(s)})^T-\nabla_x
							f_1(x,u^{(1)})^T\\
						\end{array}
						\right)\in\RR^{(s-1)\times n}.
					\]
%

	Now we show that $0$ is a regular value of $\Phi_J$. If
	$\Phi_J^{-1}(0)=\emptyset$, we are done; otherwise,
	fix a point $(u,x,\lambda^J,z)\in\Phi_J^{-1}(0)$, then 
	$\sum_{j=1}^s\lambda_j^2=1$, $E(x,u)\neq 0$ and $\rank D=s-1$ by
	Proposition \ref{prop::u2}. By some linear operations on the columns of
	the matrix in \eqref{eq::jac}, we obtain
\[
\left(
\begin{array}{cccccc}
					I_n&\lambda_2^2I_n&\cdots&\lambda_s^2I_n&*&\mathbf{0}\\
					0&e_1 x^T&\cdots&e_{s-1}x^T&D&\mathbf{0}\\
					\mathbf{0}&\mathbf{0}&\cdots&\mathbf{0}&\mathbf{0}&E(x,u)
				\end{array} \right),
\]
which implies that the rank of the matrix in \eqref{eq::jac} is $n+s$
for any
$(u,x,\lambda^J,z)\in\Phi_J^{-1}(0)$.  Hence, $0$ is a regular value of
$\Phi_J$. By the Sard's theorem with parameter (Theorem \ref{SardTheorem}), 
there exists an open and
dense semi-algebraic subset $\Sigma_J$ of $\mathscr{U}_2$ such
that for each
$u\in\Sigma_J$, $0$ is a regular value of the map
\[
	\Phi_{J,u} : \RR^n\times \Lambda_J\times \RR\rightarrow
	\RR^n\times\RR^{s-1}\times\RR,\quad (x,\lambda^J,z) \mapsto
	\Phi_J(u,x,\lambda^J,z).
\]
Since $\dim(\RR^n\times \Lambda_J\times \RR)=n+s$,  
$\Phi_{J,u}^{-1}(0)$ is either empty or a finite subset of
$\RR^n\times \Lambda_J\times \RR$. Clearly,
$\pi_J(\Phi_{J,u}^{-1}(0))$ is either empty or a finite subset of
$\RR^n$, where $\pi_J : \RR^n\times \Lambda_J\times \RR\rightarrow
\RR^n$ is the projection on the first $n$ coordinates. 

Consider the case when $s=1$ and assume $J=\{1\}$ for notational
simplicity. We can modify the the semi-algebraic map $\Phi_J$ as
	\[
		\Phi_J : \mathscr{U}_2 \times \RR^n\times \Lambda_J\times \RR\rightarrow
	\RR^n\times \RR
\]
where
	\[
		\Phi_J(u,x,\lambda^J,z)=\left(\lambda_{1}^2\nabla_x
		f_{1}(x,u^{(1)})^T,\ z\cdot
		\prod_{j=2}^r(f_j(x,u^{(j)})-f_{1}(x,u^{(1)}))-1\right).
	\]
It is straightforward to verify that analogous arguments as above
still hold, i.e., there exists an open and dense semi-algebraic subset
$\Sigma_J$ of $\mathscr{U}_2$ such that for each $u\in\Sigma_J$,
$\pi_J(\Phi_{J,u}^{-1}(0))$ is either empty or a finite subset of
$\RR^n$.

Let $\mathscr{U}_3:=\bigcap_J \Sigma_J\subseteq \mathscr{U}_2$, where
the intersection is taken over all nonempty subsets $J$ of $\{1,\ldots,r\}$ with
$\#J\le n+1$. Obviously, $\mathscr{U}_3$ is an open and dense
semi-algebraic set in $\RR^{r\times n(d)}$. 

Since $\mathscr{U}_3\subseteq\mathscr{U}_2$, 
for each $u\in\mathscr{U}_3$, $f\in\mc(F(x,u))$ and $x^0\in\cri(f)$,
we have $\# I(f,x^0)\le n+1$ by Proposition
\ref{prop::u2}. As proved in Proposition \ref{prop::cri}, it holds
that
\begin{equation}\label{eq::cri}
	\cri(\mc(F(x,u)))=\bigcup_{{J\subset\{1,\ldots,r\}}\atop{1\le \#
	J\le n+1}} \pi_J(\Phi_{J,u}^{-1}(0)). 
\end{equation}
Hence, the number of the points in $\cri(\mc(F(x,u)))$ is
			finite and bounded from above by $B_0(n,d,r)$ in
			\eqref{eq::B0}.


Fix $u\in\mathscr{U}_3$, $f\in\mc(F(x,u))$ and $x^0\in\cri(f)$. We
prove (i) and (ii) in the following.

(i) Fix any subset $J=\{j_1,\ldots,j_s\}\subseteq\{1,\ldots,r\}$ with
$2\le s\le n+1$ and $j_1<\cdots<j_s$. For any  
$(x,\lambda^J,z)\in\Phi_{J,u}^{-1}(0)$, we show that each $\lambda_{j_k}\neq 0,
k=1,\ldots,s$. For notational simplicity, we assume that
$J=\{1,\ldots,s\}$. Consider the Jacobian matrix of $\Phi_{J,u}$
\[
	\begin{aligned}
		D_{(x,\lambda^J,z)}\Phi_{J,u}:=&\left(
		\begin{array}{c|c|c}
			\frac{\partial \Phi_{J,u}}{\partial
			x_i}&\frac{\partial \Phi_{J,u}}{\partial
			\lambda_j}&\frac{\partial
				\Phi_{J,u}}{\partial z}
			\end{array}
			\right)_{i=1,\ldots,n,\ j=1,\ldots,s}\\
			=& \left(
				\begin{array}{ccccc}
					\sum_{j=1}^s\lambda_j^2\nabla_x^2
					f_j(x,u^{(j)})&2\lambda_1\nabla_x
					f_1(x,u^{(1)})&\cdots&2\lambda_s\nabla_x
					f_s(x,u^{(s)})& \mathbf{0}\\
					D & \mathbf{0}& \cdots& \mathbf{0}& \mathbf{0}\\
					*&\mathbf{0}&\cdots&\mathbf{0}&E(x,u)
				\end{array}
				\right)
		\end{aligned}
\]
To the contrary, suppose that $\lambda_l=0$ for some $l \in J$. 
Then for any $(x,\lambda^J,z)\in\Phi_{J,u}^{-1}(0)$, as
$\sum_{j=1}^s\lambda_j^2\nabla_x f_j(x,u^{(j)})=0$, the submatrix 
\begin{equation}\label{eq::submatrix}
\left(2\lambda_1\nabla_x f_1(x,u^{(1)})\quad \cdots\quad 2\lambda_s\nabla_x
f_s(x,u^{(s)})\right)
\end{equation}
has the rank at most $s-2$. Hence, the Jacobian matrix
$D_{(x,\lambda^J,z)}\Phi_{J,u}$ has the rank at most $n+s-1$ at any
$(x,\lambda^J,z)\in\Phi_{J,u}^{-1}(0)$. 
%
It contradicts the fact that for
any $u\in\mathcal{U}_3$, $0$ is the regular value of the map
$\Phi_{J,u}$. Denote the projection $\td{\pi}_J : \RR^n\times
\Lambda_J\times \RR^{r-s}\rightarrow \Lambda_J$.
 Let
$(\mu_j, j\in I(f,x^0))$ be any tuple satisfying \eqref{eq::lambda}.
Then, we have $(\sqrt{\mu_j}, j\in
I(f,x^0))\in\td{\pi}_{J'}(\Phi_{J',u}^{-1}(0))$ where $J'=I(f,x^0)$.
Consequently, the strict complementarity holds for $x^0$.

(ii) For notational simplicity, we assume that
$J'=I(f,x^0)=\{1,\ldots,s\}$. Then, there exists $z\in\RR$ such that
$(x^0,\sqrt{\mu_1},\ldots,\sqrt{\mu_s},z)\in\Phi_{J',u}^{-1}(0)$
at which the rank of the Jacobian matrix
$D_{(x,\lambda^{J'},z)}\Phi_{J',u}$ is $n+s$. 
For the case $s>1$, by \eqref{eq::lambda}, it is not difficult to
verify that
\[
	\begin{aligned}
		&\{y\in\RR^n : \nabla_x f_j(x^0, u^{(j)})^Ty=0,\
	j=1,\ldots,s\}\\
	&=\{y\in\RR^n : (\nabla_x f_j(x^0, u^{(j)})-\nabla_x
	f_1(x^0, u^{(1)}))^Ty=0,\ j=2,\ldots,s\}. 
\end{aligned}
\]
To the contrary, suppose that the system $\left(\sum_{i=1}^s\mu_i
\nabla_x^2 f_i(x^0,u^{(i)})\right)y=0$ has a nonzero solution $y$ in
the set \eqref{eq::s1}.
As the submatrix $\eqref{eq::submatrix}$ has the rank at most $s-1$ at
$(x^0,\sqrt{\mu_1},\ldots,\sqrt{\mu_s},z)$,  it is easy to see that
the rank of $D_{(x,\lambda^{J'},z)}\Phi_{J',u}$ is at most $n+s-1$ at
$(x^0,\sqrt{\mu_1},\ldots,\sqrt{\mu_s},z)$, a contradiciton.  
We omit the similar arguments for the case $s=1$. 
\end{proof}

For continuous functions $f_1,\ldots,f_r : \RR^n
	\rightarrow \RR$ and $f\in\mc(f_1,\ldots,f_r)$, we call a critical point
	$x^0\in\cri(f)$ satisfying the conditions in Proposition
	\ref{prop::u2} and Theorem \ref{th::u3}
(ii) a {\itshape nondegenerate} critical point of $f$ (c.f.,
\cite{JP1988}). By Proposition
	\ref{prop::u2} and Theorem \ref{th::u3}, all critical points of a
	CSP function selected from polynomials with generic coefficients
	are nondegenerate.


The following theorem shows that the critical values of all CSP
functions selected from a given set of finitely many polynomials 
with generic coefficients are distinct from each other.

\begin{theorem}\label{th::u4}
	There exists an open and dense semi-algebraic set
	$\mathscr{U}_4$ in $\RR^{r\times n(d)}$ such that the following
	property holds for any $u\in\mathscr{U}_4:$ for any distinct
		$x^0, \td{x}^0\in\cri(\mc(F(x,u)))$ and any $($not necessarily
		distinct$)$ $f, \td{f}\in\mc(F(x,u))$ with $x^0\in\cri(f)$ and
		$\td{x}^0\in\cri(\td{f}),$ it holds that $f(x^0)\neq
		\td{f}(\td{x}^0)$.
\end{theorem}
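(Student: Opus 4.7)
\medskip\noindent\textbf{Proof proposal.} The plan is to extend the Sard–with–parameter argument that produced $\mathscr{U}_{3}$ in Theorem \ref{th::u3}. Starting from $\mathscr{U}_{3}$, for every ordered pair of nonempty subsets $J_{1},J_{2}\subseteq\{1,\ldots,r\}$ with $\#J_{1}=s_{1}\le n+1$, $\#J_{2}=s_{2}\le n+1$, and every choice of $j_{1}\in J_{1}$, $j_{1}'\in J_{2}$, I would build a ``doubled'' semi-algebraic map that simultaneously encodes a critical point $x$ with active index set $J_{1}$ and a critical point $\td{x}$ with active index set $J_{2}$, together with the two extra scalar conditions $\|x-\td{x}\|^{2}z_{3}-1=0$ (forcing $x\ne\td{x}$) and $f_{j_{1}}(x,u^{(j_{1})})-f_{j_{1}'}(\td{x},u^{(j_{1}')})=0$ (forcing equal critical values). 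Concretely, with the notation of the proof of Theorem \ref{th::u3}, consider
\[
\Psi:\mathscr{U}_{3}\times\RR^{n}\times\Lambda_{J_{1}}\times\RR\times\RR^{n}\times\Lambda_{J_{2}}\times\RR\times\RR\longrightarrow\RR^{n+s_{1}}\times\RR^{n+s_{2}}\times\RR\times\RR,
\]
whose first two blocks coincide with $\Phi_{J_{1}}$ and $\Phi_{J_{2}}$ from Theorem \ref{th::u3} (applied respectively to $(x,\lambda^{J_{1}},z_{1})$ and $(\td{x},\lambda^{J_{2}},z_{2})$) and whose last two components are the two scalar constraints above.

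Next I would verify that $0$ is a regular value of $\Psi$ and apply Theorem \ref{SardTheorem}. A direct dimension count gives $\dim(\text{domain of }\Psi_{u})=2n+s_{1}+s_{2}+1$ while the codomain has dimension $2n+s_{1}+s_{2}+2$, so regularity of $0$ immediately forces $\Psi_{u}^{-1}(0)=\emptyset$ for every $u$ in the open and dense semi-algebraic set $\Sigma_{J_{1},J_{2},j_{1},j_{1}'}\subseteq\mathscr{U}_{3}$ produced by Sard's theorem with parameter. To check regularity, I would reuse the block-triangular argument from \eqref{eq::jac}: differentiating with respect to the coefficients $u^{(j)}_{\alpha}$ for $|\alpha|=1$ in each of the two copies already yields full rank $2n+s_{1}+s_{2}$ on the first two blocks, and the third component $\|x-\td{x}\|^{2}z_{3}-1$ becomes independent from those through its $z_{3}$ derivative $\|x-\td{x}\|^{2}\ne 0$.

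The main obstacle is the fourth component $f_{j_{1}}(x,u^{(j_{1})})-f_{j_{1}'}(\td{x},u^{(j_{1}')})$. When $j_{1}\ne j_{1}'$ the constant coefficient $u^{(j_{1})}_{0}$ contributes $1$ to this row and $0$ to the rest, so one more linearly independent column is available and full rank follows. The subtle case is $j_{1}=j_{1}'$, where the constant terms cancel; here I would instead differentiate with respect to $u^{(j_{1})}_{\alpha}$ for a monomial $x^{\alpha}$ (of degree at least $2$, so it does not perturb the already-exploited $|\alpha|=1$ directions) such that $x^{\alpha}-\td{x}^{\alpha}\ne 0$. Such an $\alpha$ exists because $x\ne\td{x}$ and the map $x\mapsto(x^{\alpha})_{|\alpha|\le d}$ separates points, which shows that the new column is independent of the previously used ones. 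This gives regularity of $0$, and hence $\Sigma_{J_{1},J_{2},j_{1},j_{1}'}$ as above.

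Finally I would set
\[
\mathscr{U}_{4}\;:=\;\bigcap_{J_{1},J_{2},j_{1},j_{1}'}\Sigma_{J_{1},J_{2},j_{1},j_{1}'},
\]
intersected over all finitely many admissible quadruples with $1\le s_{1},s_{2}\le n+1$. Being a finite intersection of open and dense semi-algebraic subsets of $\mathscr{U}_{3}$, the set $\mathscr{U}_{4}$ is open and dense and semi-algebraic in $\RR^{r\times n(d)}$. For any $u\in\mathscr{U}_{4}$, any distinct $x^{0},\td{x}^{0}\in\cri(\mc(F(x,u)))$, and any $f,\td{f}\in\mc(F(x,u))$ with $x^{0}\in\cri(f)$, $\td{x}^{0}\in\cri(\td{f})$, pick $j_{1}\in I(f,x^{0})$, $j_{1}'\in I(\td{f},\td{x}^{0})$. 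If $f(x^{0})=\td{f}(\td{x}^{0})$ held, then $(x^{0},\sqrt{\mu},\ldots,\td{x}^{0},\sqrt{\td{\mu}},\ldots,z_{3})$ with the multipliers provided by Proposition \ref{prop::u2} and $z_{3}=\|x^{0}-\td{x}^{0}\|^{-2}$ would lie in $\Psi_{u}^{-1}(0)$ for $J_{1}=I(f,x^{0}),\,J_{2}=I(\td{f},\td{x}^{0})$, contradicting $\Psi_{u}^{-1}(0)=\emptyset$. Hence $f(x^{0})\ne\td{f}(\td{x}^{0})$, completing the proof.
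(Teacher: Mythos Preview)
Your overall strategy—building a doubled critical-point map and forcing its fiber to be empty via Sard's theorem with parameter and a dimension count—is precisely the paper's approach. But the regularity argument you sketch has a genuine gap at the crucial fourth component.

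You assert that the column $\partial\Psi/\partial u^{(j_{1})}_{0}$ contributes $1$ to the value-equality row and $0$ to the rest. The first claim is right; the second is not. Since $j_{1}\in J_{1}$, the difference components $f_{i_{k}}(x)-f_{i_{1}}(x)$ inside $\Phi_{J_{1}}$ (and the $E$-factor when $j_{1}$ is the base index) depend on $u^{(j_{1})}_{0}$, so that column has nonzero entries in the $\Phi_{J_{1}}$ block. The same objection hits your $|\alpha|\ge 2$ columns in the case $j_{1}=j_{1}'$: those perturb $\nabla_{x}f_{j_{1}}$ and hence the gradient rows of $\Phi_{J_{1}}$. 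So the block-triangular picture you invoke fails, and you have not shown that your chosen $u$-column is independent of the $(x,\lambda^{J_{1}},z_{1},\td{x},\lambda^{J_{2}},z_{2},z_{3})$-columns. Concretely: once you span the first $2n+s_{1}+s_{2}+1$ rows with the non-$u$ columns (this part \emph{is} fine, because $u\in\mathscr{U}_{3}$ makes $D_{(x,\lambda^{J_{1}},z_{1})}\Phi_{J_{1}}$ and $D_{(\td{x},\lambda^{J_{2}},z_{2})}\Phi_{J_{2}}$ invertible and they act on disjoint variables), you must prove the last row is not already in their row span. Your argument does not do that. A side remark: your claim that the first two blocks attain full rank ``by the $|\alpha|=1$ $u$-columns'' is also shaky when $J_{1}\cap J_{2}\neq\emptyset$, since those columns hit both blocks; the clean route is the one just indicated via $\mathscr{U}_{3}$.

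The paper sidesteps the difficulty by recasting the value constraint as a Lagrangian difference. It introduces $L_{i}(u,x,\lambda)=\sum_{k\neq i}\lambda_{k}^{2}(f_{k}-f_{i})+f_{i}$ on an open multiplier domain $\Lambda^{(i)}\subset\RR^{r-1}$ and sets $\Phi_{i}:=\nabla_{(x,\lambda)}L_{i}$. The doubled map is $\Psi_{i,j}=\big(L_{i}-L_{j},\,\Phi_{i},\,\Phi_{j}\big)$. Because the value row is literally the gradient of $L_{i}$ (resp.\ $L_{j}$) in the $(x,\lambda)$ (resp.\ $(\td{x},\td{\lambda})$) directions, it \emph{vanishes} on those column blocks at any zero of $\Phi_{i},\Phi_{j}$; the Jacobian is then genuinely block-triangular, and summing the degree-one $u$-columns produces the entry $x-\td{x}\neq 0$ in the value row, finishing the rank count. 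Your setup can be repaired by the same envelope-type computation—using $\sum_{k\in J_{1}}\lambda_{k}^{2}\nabla_{x}f_{k}=0$ one finds that the total $u^{(m)}_{\alpha}$-derivative of the critical value equals $\lambda_{m}^{2}x^{\alpha}$ for $m\in J_{1}$ and $0$ otherwise, whence summing over $m$ gives $x^{\alpha}-\td{x}^{\alpha}$—but as written the key independence step is missing.
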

\begin{proof}
	Let $\mathscr{U}_3$ be the open and dense semi-algebraic set in
$\RR^{r\times n(d)}$ in Theorem \ref{th::u3}. 
Fix an $i\in\{1,\ldots,r\}$ and define the set 
\begin{equation}\label{eq::Lambda}
	\Lambda^{(i)}:=\left\{(\lambda_1,\ldots,\lambda_{i-1},\lambda_{i+1},\ldots,\lambda_r)
	\in\RR^{r-1} : 1-\sum_{k\neq i}\lambda_k^2>0\right\}.
\end{equation}
Define the polynomial
function $L_i : \mathscr{U}_3\times\RR^n\times\Lambda^{(i)}
\rightarrow \RR$ by 
\begin{equation}\label{eq::L_i}
	L_i(u,x,\lambda):=\sum_{k\neq i}
	\lambda_k^2(f_k(x,u^{(k)})-f_i(x,u^{(i)}))+f_i(x,u^{(i)}). 
\end{equation}
We have 
\[
	\begin{aligned}
		\left(\frac{\partial{L_i}}{\partial x_1},\ldots,
		\frac{\partial{L_i}}{\partial x_n}\right)&=\sum_{k\neq
		i}\lambda_k^2
		\left(\nabla_x f_k(x,u^{(k)})-\nabla_x f_i(x,u^{(i)})\right)^T+\nabla_x
		f_i(x,u^{(i)})^T\\
		&= \sum_{k\neq i}\lambda_k^2 \nabla_x f_k(x,u^{(k)})^T+\left(1-\sum_{k\neq
		i}\lambda_k^2\right)\nabla_x f_i(x,u^{(i)})^T
	\end{aligned}
\]
and 
\[
\frac{\partial{L_i}}{\partial \lambda_k}=2\lambda_k
(f_k(x,u^{(k)})-f_i(x,u^{(i)})),\quad k=1,\ldots,i-1,i+1,\ldots,r.
\]
Define the semi-algebraic map $\Phi_i:
\mathscr{U}_3\times\RR^n\times\Lambda^{(i)} \rightarrow \RR^{n+r-1}$
by 
\begin{equation}\label{eq::Phi_i}
	\Phi_i(u,x,\lambda):=\left(\frac{\partial{L_i}}{\partial x_1},\ldots,
		\frac{\partial{L_i}}{\partial x_n},
		\frac{\partial{L_i}}{\partial \lambda_1},
		\ldots,\frac{\partial{L_i}}{\partial \lambda_{i-1}},
		\frac{\partial{L_i}}{\partial
			\lambda_{i+1}},\ldots,\frac{\partial{L_i}}{\partial
			\lambda_r}\right). 
	\end{equation}

	It is easy to check that
	\begin{equation}\label{eq::jac2}
		\begin{aligned}
			&\left(
			\begin{array}{c|c|c|c|c|c|c}
				\frac{\partial \Phi_i}{\partial
					u^{(1)}_{\alpha}}&\cdots&\frac{\partial \Phi_i}{\partial
						u^{(i)}_{\alpha}}&\cdots&\frac{\partial
						\Phi_i}{\partial u^{(r)}_{\alpha}}
						&\frac{\partial \Phi_i}{\partial x_j}&\frac{\partial
						\Phi_i}{\partial \lambda_k}
					\end{array}
					\right)_{|\alpha|=1,\ j=1,\ldots,n,\
					k=1,\ldots,i-1,i+1,\ldots,r}\\
					=&\left(
				\begin{array}{ccccccc}
					\lambda_1^2I_n&\cdots&\left(1-\sum_{k\neq
					i}\lambda_k^2\right)I_n&\cdots&\lambda_r^2I_n&*&*\\
					2\lambda_1e_1 x^T&\cdots&-\left(\sum_{k\neq
					i}2\lambda_ke_k\right)x^T&\cdots&2\lambda_re_rx^T&D&E
				\end{array}
				\right),
				\end{aligned}
			\end{equation}
where $e_k\in\RR^{r-1}$ is the column vector with the $k$-th (resp.,
$(k-1)$-th) entry being $1$ and the others being $0$ for
$k=1,\ldots,i-1$ (resp., $k=i+1,\ldots,r$), $I_n$ is the identity matrix
of order $n$, 
	\[
		D=\left(
						\begin{array}{c}
							2\lambda_1\left(\nabla_x f_1(x,u^{(1)})^T-\nabla_x
							f_i(x,u^{(i)})^T\right)\\
							\vdots\\
							2\lambda_{i-1}\left(\nabla_x f_{i-1}(x,u^{(i-1)})^T-\nabla_x
							f_i(x,u^{(i)})^T\right)\\
							2\lambda_{i+1}\left(\nabla_x f_{i+1}(x,u^{(i+1)})^T-\nabla_x
							f_i(x,u^{(i)})^T\right)\\
							\vdots\\
							2\lambda_r\left(\nabla_x f_{r}(x,u^{(r)})^T-\nabla_x
							f_1(x,u^{(1)})^T\right)\\
						\end{array}
						\right)\in\RR^{(r-1)\times n},
					\]
					and
					\[
						E=\diag\left(
						\begin{array}{c}
						2(f_k(x,u^{(k)})-f_i(x,u^{(i)}))\\
						\vdots\\
						2(f_{i-1}(x,u^{(i-1)})-f_i(x,u^{(i)}))\\
                        2(f_{i+1}(x,u^{(i+1)})-f_i(x,u^{(i)}))\\
						\vdots\\
						2(f_r(x,u^{(r)})-f_i(x,u^{(i)}))
					\end{array}
						\right)\in\RR^{(r-1)\times (r-1)}.
	\]
By some linear operations on the columns of the matrix in
\eqref{eq::jac2}, we obtain
\[	
				\left(
			\begin{array}{ccccccc}
				\lambda_1^2I_n&\cdots&\left(1-\sum_{k\neq
				i}\lambda_k^2\right)I_n&\cdots&I_n&*&*\\
				2\lambda_1e_1 x^T&\cdots&-\left(\sum_{k\neq
				i}2\lambda_ke_k\right)x^T&\cdots&\mathbf{0}&D&E
			\end{array}
			\right),
\]

Now we show that $0$ is a regular value of $\Phi_i$. If
$\Phi_i^{-1}(0)=\emptyset$, we are done; otherwise, fix a point
$(u,x,\lambda)\in\Phi_i^{-1}(0)$ and a 
function $f\in\mc(F(x,u))$ such that $f(x)=f_i(x)$. It is obvious
that $x$ is a critical point of $f$. As $u\in\mathscr{U}_3$, by
Theorem \ref{th::u3} (i), $\lambda_k\neq 0$ for each $k$ with
$f_k(x,u^{(k)})-f_i(x,u^{(i)})=0$. As
$\mathscr{U}_3\subseteq\mathscr{U}_2$ where $\mathscr{U}_2$ is the open and
dense semi-algebraic subset in $\RR^{r\times n(d)}$ in Proposition
\ref{prop::u2}, the rank of the matrix in \eqref{eq::jac2}
is $n+r-1$ by Proposition
\ref{prop::u2}. Hence, $0$ is a regular value of $\Phi_i$. 
By the Sard's theorem with parameter (Theorem \ref{SardTheorem}), there
exists an open and
dense semi-algebraic subset $\mathscr{U}^{(i)}$ of $\mathscr{U}_3$ such
that for each $u\in\mathscr{U}^{(i)}$, $0$ is a regular value of the map
\[
	\Phi_{i,u} : \RR^n\times \Lambda^{(i)}\rightarrow
	\RR^{n+r-1},\quad (x,\lambda) \mapsto \Phi_{i,u}(u,x,\lambda).
\]

For any $i,j\in\{1,\ldots,r\}$, define the semi-algebraic map
\[
	\Psi_{i,j} :
	(\mathscr{U}^{(i)}\cap\mathscr{U}^{(j)})\times\left(\left((\RR^n\times\Lambda^{(i)})
	\times(\RR^n\times\Lambda^{(j)})\right)\setminus\mathcal{E}\right)
	\rightarrow \RR\times\RR^{n+r-1}\times\RR^{n+r-1}
\]
by 
\[
	\Psi_{i,j}(u,x,\lambda,\td{x},\td{\lambda})=\left(L_i(u,x,\lambda)-L_j(u,\td{x},\td{\lambda}),\ 
	\Phi_i(u,x,\lambda),\ \Phi_j(u,\td{x},\td{\lambda})\right),
\]
where
\[
	\mathcal{E}:=\{(x,\lambda,\td{x},\td{\lambda})\in(\RR^n\times\Lambda^{(i)})
	\times(\RR^n\times\Lambda^{(j)}) : x=\td{x}\}.
\]
It is easy to check that
\begin{equation}\label{eq::jac3}
	\begin{aligned}
		& \left(
		\begin{array}{c|c|c}
			\frac{\partial{\Psi_{i,j}}}{\partial{u^{(k)}_{\alpha}}}
			&D_{(x,\lambda)}\Psi_{i,j}&D_{(\td{x},\td{\lambda})}\Psi_{i,j}
		\end{array}
		\right)_{k=1,\ldots,r,\ |\alpha|=1}\\
	&=\left(
		\begin{array}{ccc}
			A&\Phi_i(u,x,\lambda)&\Phi_j(u,\td{x},\td{\lambda})\\
			*&D_{(x,\lambda)}\Phi_i(u,x,\lambda)&\mathbf{0}\\
			*&\mathbf{0}& D_{(\td{x},\td{\lambda})}\Phi_j(u,\td{x},\td{\lambda})
		\end{array}
		\right),
	\end{aligned}
\end{equation}
where
\[
	D_{(x,\lambda)}:=\left(\frac{\partial}{\partial x_1}, \ldots,
	\frac{\partial}{\partial x_n}, \frac{\partial}{\partial \lambda_1}, 
	\ldots, \frac{\partial}{\partial \lambda_{i-1}},
	\frac{\partial}{\partial \lambda_{i+1}},\ldots, \frac{\partial}{\partial
	\lambda_r}\right),
\]
\[
	D_{(\td{x},\td{\lambda})}:=\left(\frac{\partial}{\partial \td{x}_1}, \ldots,
	\frac{\partial}{\partial \td{x}_n}, \frac{\partial}{\partial
		\td{\lambda}_1}, 
		\ldots, \frac{\partial}{\partial \td{\lambda}_{j-1}},
		\frac{\partial}{\partial \td{\lambda}_{j+1}},\ldots, \frac{\partial}{\partial
			\td{\lambda}_r}\right),
\]
\[
	\begin{aligned}
		A&=\left(
		\begin{array}{c|c|c|c|c|c|c}
			\frac{\partial{(L_i-L_j)}}{\partial{u^{(1)}_{\alpha}}}&\cdots&\frac{\partial{(L_i-L_j)}}{\partial{u^{(i)}_{\alpha}}}
			&\cdots&\frac{\partial{(L_i-L_j)}}{\partial{u^{(j)}_{\alpha}}}&\cdots&\frac{\partial{(L_i-L_j)}}{\partial{u^{(r)}_{\alpha}}}
		\end{array}
		\right)_{|\alpha|=1}\\
		&=\left(
		\begin{array}{ccccccc}
			\lambda_1^2x-\td{\lambda}_1^2\td{x}&\cdots&\left(1-\sum_{k\neq
			i}\lambda_k^2\right)x-\td{\lambda}_i^2\td{x}
			&\cdots&\lambda_j^2x-\left(1-\sum_{k\neq
			j}\td{\lambda}_k^2\right)\td{x}&\cdots&\lambda_r^2x-\td{\lambda}_r^2\td{x}
		\end{array}
		\right).
	\end{aligned}
\]

Now we show that $0$ is a regular value of $\Psi_{i,j}$. If
$\Psi_{i,j}^{-1}(0)=\emptyset$, we are done; otherwise,
fix a point $(u,x,\lambda,x,\td{\lambda})\in\Psi_{i,j}^{-1}(0)$. We
have $x\neq \td{x}$, $u\in\mathscr{U}^{(i)}\cap\mathscr{U}^{(j)}$ and 
\[
	\Phi_i(u,x,\lambda)=\Phi_j(u,\td{x},\td{\lambda})=0.
\]
Since $0$ is a regular value of the maps $\Phi_{i,u}$ and
$\Phi_{j,u}$, it holds that
\[
	\rank\ D_{(x,\lambda)}\Phi_i(u,x,\lambda)=\rank\
	D_{(\td{x},\td{\lambda})}\Phi_j(u,\td{x},\td{\lambda})=n+r-1. 
\]
Note that by some linear operations on the columns of the matrix $A$, we obtain
\[
	\left(
		\begin{array}{ccccccc}
			\lambda_1^2x-\td{\lambda}_1^2\td{x}&\cdots&\left(1-\sum_{k\neq
			i}\lambda_k^2\right)x-\td{\lambda}_i^2\td{x}
			&\cdots&\lambda_j^2x-\left(1-\sum_{k\neq
			j}\td{\lambda}_k^2\right)\td{x}&\cdots&x-\td{x}
		\end{array}
		\right).
\]
Since $x\neq\td{x}$, we obtain that the rank of the matrix in
\eqref{eq::jac3} is $1+2(n+r-1)$ at
any $(u,x,\lambda,\td{x},\td{\lambda})\in\Psi_{i,j}^{-1}(0)$ and hence
$0$ is a regular value of $\Psi_{i,j}$. 
By the Sard's theorem with parameter (Theorem \ref{SardTheorem}), 
there exists an open and
dense semi-algebraic subset $\mathscr{U}^{(i,j)}$ of
$\mathscr{U}^{(i)}\cap\mathscr{U}^{(j)}$ such
that for each $u\in\mathscr{U}^{(i,j)}$, $0$ is a regular value of the map
\[
	\begin{aligned}
		\Psi_{i,j,u} : \left((\RR^n\times\Lambda^{i})
		\times(\RR^n\times\Lambda^{(j)})\right)\setminus\mathcal{E}
		&\rightarrow \RR\times\RR^{n+r-1}\times\RR^{n+r-1}\\
	(x,\lambda,\td{x},\td{\lambda})&\mapsto
	\Psi_{i,j}(u,x,\lambda,\td{x},\td{\lambda}).
	\end{aligned}
\]
Note that 
\[
	\dim(\left((\RR^n\times\Lambda^{i})
		\times(\RR^n\times\Lambda^{(j)})\right)\setminus\mathcal{E})=2(n+r-1)
		<\dim(\RR\times\RR^{n+r-1}\times\RR^{n+r-1}).
\]
Hence, $\Psi_{i,j,u}^{-1}(0)=\emptyset$.  Let 
\[
	\mathscr{U}_4=\bigcap_{i,j\in\{1,\ldots,r\}} \mathscr{U}^{(i,j)}. 
\]
Then, $\mathscr{U}_4$ is an open and dense semi-algebraic set in
$\RR^{r\times n(d)}$. 

For any $u\in\mathscr{U}_4$, fix two distinct $x^0,
\td{x}^0\in\cri(\mc(F(x,u)))$ and any $f, \td{f}\in\mc(F(x,u))$ such
that $x^0\in\cri(f)$ and $\td{x}^0\in\cri(\td{f})$.  Fix two indices
$i\in I(f,x^0)$ and $j\in I(\td{f},\td{x}^0)$. As
$\mathscr{U}_4\subseteq\mathscr{U}_3$ and $x^0\in\cri(f)$, by Theorem
\ref{th::u3} (i), 
there exists
$\lambda=(\lambda_1,\ldots,\lambda_{i-1},\lambda_{i+1},\ldots,\lambda_r)\in\Lambda^{(i)}$
such that 
\[
	\lambda_k=0,\ \forall k\not\in I(f,x^0),\quad\sum_{k\neq
	i}\lambda_k^2\left(\nabla_x f_k(x^0,u^{(k)})-\nabla_x
	f_i(x^0,u^{(i)})\right)+\nabla_x f_i(x^0,u^{(i)})=0.
\]
Similarly, there exists 
$\td{\lambda}=(\td{\lambda}_1,\ldots,\td{\lambda}_{j-1},\td{\lambda}_{j+1},\ldots,\td{\lambda}_r)\in\Lambda^{(j)}$
such that 
\[
	\td{\lambda}_k=0,\ \forall k\not\in I(\td{f},\td{x}^0),\quad\sum_{k\neq
	j}\td{\lambda}_k^2\left(\nabla_x f_k(\td{x}^0,u^{(k)})-\nabla_x
	f_j(\td{x}^0,u^{(j)})\right)+\nabla_x f_j(\td{x},u^{(j)})=0.
\]
Therefore, we obtain 
\[
	\begin{aligned}
		L_i(u,x^0,\lambda)&=f_i(x^0,u^{(i)})=f(x^0),\\
		L_j(u,\td{x}^0,\td{\lambda})&=f_j(\td{x}^0,u^{(j)})=\td{f}(\td{x}^0),\\
		\Phi_i(u,x^0,\lambda)&=\Phi_j(u,\td{x}^0,\td{\lambda})=0. 
	\end{aligned}
\]
By the definition of $\mathscr{U}_4$,
$\Psi_{i,j,u}^{-1}(0)=\emptyset$. Hence, we must have $f(x^0)\neq
\td{f}(\td{x}^0)$. The conclusion follows. 
\end{proof}

Consequently, the uniqueness of optimal solutions for global optimization problems
with CSP functions is a generic property.  

\begin{corollary}\label{cor::u4}
	Let $\mathscr{U}_4$ be open and dense semi-algebraic set the in
	$\RR^{r\times n(d)}$ in Theorem \ref{th::u4}. Then, for any
	$u\in\mathscr{U}_4$ and $f\in\mc(F(x,u)),$ the optimization
	problem $\min_{x\in\RR^n} f(x)$ has at most one optimal solution.
\end{corollary}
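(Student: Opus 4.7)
The plan is to derive the corollary by a direct reduction to Theorem~\ref{th::u4}, using the first-order necessary optimality condition for CSP functions (cited after Definition~3.2 from \cite{Womersley1982}).

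Suppose for contradiction that for some $u\in\mathscr{U}_4$ and $f\in\mc(F(x,u))$, the problem $\min_{x\in\RR^n} f(x)$ admits two distinct optimal solutions $x^0$ and $\td{x}^0$. I would first observe that both $x^0$ and $\td{x}^0$ are in particular local minimizers of $f$, so by the first-order necessary optimality condition we have $0\in\partial^{\circ} f(x^0)$ and $0\in\partial^{\circ} f(\td{x}^0)$, i.e., $x^0,\td{x}^0\in\cri(f)\subseteq\cri(\mc(F(x,u)))$. Moreover, since both are global minimizers, $f(x^0)=f(\td{x}^0)$.

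Now I would apply Theorem~\ref{th::u4} to the pair of distinct critical points $x^0, \td{x}^0\in\cri(\mc(F(x,u)))$ by taking $\td{f}:=f$ (the theorem explicitly allows $f$ and $\td{f}$ to coincide). The conclusion of the theorem yields $f(x^0)\neq f(\td{x}^0)$, contradicting the previous equality. Hence $f$ has at most one optimal solution, and the corollary is established.

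The main (and only) substantive content has already been absorbed into Theorem~\ref{th::u4}; the only thing to verify here is that the hypotheses of that theorem apply with $f=\td{f}$, which is immediate from its statement. There is no real obstacle, since the optimality condition guarantees that global minimizers are critical points, and the genericity result then forces distinct critical points of the same CSP function to carry distinct values.
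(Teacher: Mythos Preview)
Your proof is correct and is exactly the intended argument: the paper states the corollary without an explicit proof, treating it as an immediate consequence of Theorem~\ref{th::u4} together with the first-order necessary optimality condition. Your write-up simply fills in the obvious details.
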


The following result shows that the goodness at infinity is a
generic property of CSP functions.
Recall the non-smooth slope $\mathfrak{m}^{\circ}_f(x)$ in Definition
\ref{def::slope}.

\begin{theorem}\label{th::u5}
	There exists an open and dense semi-algebraic set
	$\mathscr{U}_5$ in $\RR^{r\times n(d)}$ such that for any
	$u\in\mathscr{U}_5$ and $f\in\mc(F(x,u)),$ $f$ is ``good at
	infinity'' in the sense that there exist constants $c>0$ and $R>0$
	such that $\mathfrak{m}^{\circ}_f(x)\ge c$ for any $x\in\RR^n$
	with $\Vert x\Vert\ge R$. 
\end{theorem}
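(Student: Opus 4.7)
The plan is to combine the Curve Selection Lemma at infinity with the Sard-with-parameter strategy of Theorems \ref{th::u3} and \ref{th::u4}. Suppose, aiming for a contradiction, that for some $u$ in the dense subset to be constructed and some $f\in\mc(F(x,u))$, the function $f$ fails to be good at infinity. Then for every $k\in\N$ the semi-algebraic set $\{x\in\RR^n : \Vert x\Vert \ge k,\ \mathfrak{m}^{\circ}_f(x)\le 1/k\}$ is nonempty (semi-algebraicity being immediate from \eqref{eq::mf}). The Curve Selection Lemma at infinity (which follows from Lemma \ref{CurveSelectionLemma} by inversion; see \cite{HaHV2017}) then yields an analytic semi-algebraic curve $\phi:(0,\varepsilon)\to\RR^n$ with $\Vert\phi(t)\Vert\to\infty$ and $\mathfrak{m}^{\circ}_f(\phi(t))\to 0$ as $t\to 0^+$. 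Shrinking $\varepsilon$ and invoking the Monotonicity Lemma \ref{MonotonicityLemma}, we may assume that the active set $I(f,\phi(t))$ equals some fixed $J=\{j_1,\ldots,j_s\}$ on $(0,\varepsilon)$ (with $s\le n+1$ once $u\in\mathscr{U}_1$) and that there is an analytic semi-algebraic $\lambda(t)\in\Lambda_J$ realizing the slope, so that
\[
	\sum_{k=1}^s\lambda_{j_k}(t)^2 \nabla_x f_{j_k}(\phi(t),u^{(j_k)}) \to 0,
\]
$f_{j_k}(\phi(t),u^{(j_k)})=f_{j_1}(\phi(t),u^{(j_1)})$ for all $k$, and $f_i(\phi(t),u^{(i)})\neq f_{j_1}(\phi(t),u^{(j_1)})$ for $i\notin J$.

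The second step is to compactify the curve via the blow-up substitution $\phi(t)=y(t)/s(t)$, with $s(t):=1/\Vert\phi(t)\Vert\in(0,1)$ and $y(t):=\phi(t)/\Vert\phi(t)\Vert\in\mathbb{S}^{n-1}$, so that $s(t)\to 0^+$ as $t\to 0^+$. Multiplying the gradient condition by $s^{d-1}$ and each active equation by $s^d$ clears the denominators coming from $\nabla_x f_i(y/s)$ and $f_i(y/s)$, turning everything into a \emph{polynomial} system in $(y,s,\lambda,u)$. For each $J\subseteq\{1,\ldots,r\}$ with $1\le\# J\le n+1$, I would define the semi-algebraic polynomial map
\[
	\Psi_J:\mathscr{U}_3\times\mathbb{S}^{n-1}\times\Lambda_J\times(0,1)\times\RR\longrightarrow\RR^n\times\RR^{s-1}\times\RR,
\]
whose first block is $s^{d-1}\sum_{k=1}^s\lambda_{j_k}^2\nabla_x f_{j_k}(y/s,u^{(j_k)})$, whose middle block consists of the cleared differences $s^d(f_{j_k}(y/s,u^{(j_k)})-f_{j_1}(y/s,u^{(j_1)}))$ for $k=2,\ldots,s$, and whose last coordinate is $z\cdot E_J(y,s,u)-1$ with $E_J:=\prod_{i\notin J}s^d(f_i(y/s,u^{(i)})-f_{j_1}(y/s,u^{(j_1)}))$ enforcing $f_i\neq f_{j_1}$ off $J$, exactly as in the proof of Theorem \ref{th::u3}. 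The data $(u,y(t),\lambda(t),s(t),z(t))$ lies in $\Psi_J^{-1}(0)$ for small $t$ and, by the unboundedness of $\phi$, traces out a $1$-dimensional semi-algebraic subset of the fiber.

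A direct Jacobian computation, parallel to \eqref{eq::jac}, then shows that $0$ is a regular value of $\Psi_J$ on $\{s>0\}$: the $\partial\Psi_J/\partial u^{(j_k)}_\alpha$ for $|\alpha|=1$ contribute the $\lambda_{j_k}^2 s^{d-1}I_n$ and $s^d y^T$-patterns that already yielded full rank in Theorem \ref{th::u3}, the positivity of $s^{d-1}$ preserving the rank argument, while the auxiliary $z$-row contributes $E_J\neq 0$. By Theorem \ref{SardTheorem}, there is an open and dense semi-algebraic subset $\Sigma_J\subseteq\mathscr{U}_3$ such that, for every $u\in\Sigma_J$, $0$ is a regular value of $\Psi_{J,u}:=\Psi_J(u,\cdot)$. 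Since
\[
	\dim\bigl(\mathbb{S}^{n-1}\times\Lambda_J\times(0,1)\times\RR\bigr)=(n-1)+(s-1)+1+1=n+s=\dim(\RR^n\times\RR^{s-1}\times\RR),
\]
$\Psi_{J,u}^{-1}(0)$ is either empty or finite for each $u\in\Sigma_J$, contradicting the $1$-dimensional curve above. Setting $\mathscr{U}_5:=\bigcap_J\Sigma_J$, intersected over the finitely many admissible $J$, then gives the required open and dense semi-algebraic set. The main obstacle will be carrying out the denominator-clearing step so that $\Psi_J$ is genuinely polynomial and the full-rank conclusion of \eqref{eq::jac} transfers verbatim to $\Psi_J$ on $\{s>0\}$; the edge case $s=1$ should be handled by a slightly modified map, exactly as in the proof of Theorem \ref{th::u3}.
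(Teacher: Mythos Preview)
There is a genuine gap. Your curve $(y(t),\lambda(t),s(t),z(t))$ does \emph{not} lie in $\Psi_{J,u}^{-1}(0)$: you only know that $\sum_{k}\lambda_{j_k}(t)^{2}\nabla_x f_{j_k}(\phi(t),u^{(j_k)})\to 0$ as $t\to 0^{+}$, not that it vanishes for positive $t$. Since $s(t)>0$, the first block of $\Psi_J$ along the curve equals $s(t)^{d-1}$ times this (in general nonzero) vector, so the curve is merely \emph{asymptotic} to the zero fiber, not contained in it; in fact, for $u\in\mathscr{U}_3$ the finiteness of $\cri(\mc(F(x,u)))$ forces $\mathfrak{m}^{\circ}_f(\phi(t))>0$ for all small $t>0$. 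Hence no contradiction with the $0$-dimensionality of $\Psi_{J,u}^{-1}(0)$ arises. More structurally, your $\Psi_J$ on $\{s>0\}$ is just a diffeomorphic reparametrization (via $x=y/s$) of the map $\Phi_J$ from Theorem~\ref{th::u3}; applying Sard-with-parameter to it on $\{s>0\}$ recovers exactly the conclusion of Theorem~\ref{th::u3} and says nothing about asymptotic critical behavior as $\Vert x\Vert\to\infty$.

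The paper's argument is different and sidesteps this issue. Working with the maps $\Phi_i$ of Theorem~\ref{th::u4}, it defines the bad set $\mathscr{L}_i\subset\mathscr{U}_3$ of parameters admitting an escaping sequence $(x^{(l)},\lambda^{(l)})$ with $\nabla_x L_i\to 0$ and $\partial_{\lambda_k}L_i=0$, and shows $\dim\mathscr{L}_i\le r\times n(d)-1$ via a closure argument in the projective compactification $\RR^{r\times n(d)}\times\PP^n\times\RR^{r-1}$. The key device is a \emph{parameter} perturbation rather than a domain compactification: given such a sequence at $\bar u$, one subtracts the residuals $y_j^{(l)}:=\partial_{x_j}L_i(\bar u,x^{(l)},\lambda^{(l)})$ from every linear coefficient $u^{(k)}_{e_j}$ to obtain $u^{(l)}\to\bar u$ at which $(u^{(l)},x^{(l)},\lambda^{(l)})\in\Phi_i^{-1}(0)$ exactly. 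This places $\bar u$ in $\pi\bigl(\overline{\Phi_i^{-1}(0)}\setminus\Phi_i^{-1}(0)\bigr)$, and Proposition~\ref{DimensionProposition}(ii) gives the dimension drop. If you want to salvage your compactification route, you would have to extend $\Psi_J$ smoothly across $s=0$ and run Sard-with-parameter on the enlarged domain, verifying full rank on the boundary stratum $\{s=0\}$ using the \emph{top-degree} coefficients $u^{(k)}_\alpha$, $|\alpha|=d$ (the linear ones no longer contribute there); this is a substantially different computation from \eqref{eq::jac}.
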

\begin{proof}
Let $\mathscr{U}_3$ be the open and dense semi-algebraic set in
$\RR^{r\times n(d)}$ in Theorem \ref{th::u3}. Recall the set
$\Lambda^{(i)}$ in \eqref{eq::Lambda} and the polynomial function
$L_i$ in \eqref{eq::L_i}. For $i=1,\ldots,r$, define 
\begin{equation}\label{eq::LL_i}
	\mathscr{L}_i:=\left\{u\in\mathscr{U}_3 : 
	\left\{
	\begin{aligned}
		&\exists
		\{(x^{(l)},\lambda^{(l)})\}_{l\in\N}\subset\RR^n\times\Lambda^{(i)}\
		\text{such that}\\
		&\lim_{l\rightarrow\infty} \Vert
		x^{(l)}\Vert=+\infty,\ \frac{\partial L_i}{\partial
		\lambda_k}(u,x^{(l)},\lambda^{(l)})=0,\ k\neq i,\\
		&\lim_{l\rightarrow\infty} \frac{\partial L_i}{\partial
		x_j}(u,x^{(l)},\lambda^{(l)})=0,\ j=1,\ldots,n,
	\end{aligned}\right.
\right\}.
\end{equation}
We will show that $\mathscr{L}_i$ is a semi-algebraic set of dimension
at most $r\times n(d)-1$.

Recall the semi-algebraic map $\Phi_i$ in \eqref{eq::Phi_i}. Let
$\mathcal{C}_i$ be the closure of the set
$\Phi_i^{-1}(0)\subset\mathscr{U}_3\times\RR^n\times\Lambda^{(i)}$ in
$\RR^{r\times n(d)}\times\PP^n\times\RR^{r-1}$ where $\PP^n$ is the
real projective space. As the sets $\mathcal{C}_i$ and
$\Phi_i^{-1}(0)$ are semi-algebraic, so is the set
$\mathcal{C}_i\setminus\Phi_i^{-1}(0)$.

Let $\pi: \RR^{r\times
n(d)}\times\PP^n\times\RR^{r-1}\rightarrow \RR^{r\times n(d)}$ be the
projection on the first $r\times n(d)$ coordinates. Assume that
$\mathscr{L}_i\neq\emptyset$ and fix a point
$\bar{u}\in\mathscr{L}_i$. We will show that
$\bar{u}\in\pi(\mathcal{C}_i\setminus\Phi_i^{-1}(0))$. 
For $\bar{u}$, there exists a sequence
$\{(x^{(l)},\lambda^{(l)})\}_{l \in \N}\subset\RR^n\times\Lambda^{(i)}$ satisfying
the conditions in \eqref{eq::LL_i}.
For each $j=1,\ldots,n$, we have 
\[
	\frac{\partial L_i}{\partial x_j}(u,x,\lambda)=\left[\sum_{k\neq
	i}\lambda_k^2 u_{e_j}^{(k)}+\left(1-\sum_{k\neq
	i}\lambda_k^2\right)u_{e_j}^{(i)}\right]+A_{i,j},
\]
where $e_j\in\RR^n$ is the vector whose $j$-th entry is $1$ and the
others are $0$ and $A_{i,j}$ is a polynomial in $(u^{(k)}_\alpha, x,
\lambda)$ with $k=1,\ldots,r$ and $\vert\alpha\vert>1$. Let
\[
	y_j^{(l)}=\frac{\partial L_i}{\partial x_j}(\bar{u}, x^{(l)},\lambda^{(l)}),\
j=1,\ldots,n,
\]
and $u^{(l)}=(u^{(l,1)},\ldots,u^{(l,r)})\in\RR^{r\times n(d)}$ where
$u^{(l,k)}=(u^{(l,k)}_\alpha)_{|\alpha|\le d}\in\RR^{n(d)}$ such that
for $k=1,\ldots,r$, 
\[
	u^{(l,k)}_{e_j}=\bar{u}^{(k)}_{e_j}-y_j^{(l)},\ j=1,\ldots,n,\quad
	u^{(l,k)}_{\alpha}=\bar{u}^{(k)}_\alpha,\ \vert\alpha\vert\neq 1. 
\]
Clearly, $\lim_{l\rightarrow\infty} u^{(l)}=\bar{u}$. It is easy to
check that for each $l\in\N$, 
\[
\frac{\partial L_i}{\partial
\lambda_k}(u^{(l)},x^{(l)},\lambda^{(l)})=0,\ k\neq i,\quad
\frac{\partial L_i}{\partial x_j}(u^{(l)},x^{(l)},\lambda^{(l)})=0,\ j=1,\ldots,n.
\]
That is, $(u^{(l)},x^{(l)},\lambda^{(l)})\in\Phi_i^{-1}(0)$ for all
$l\in\N$, which implies that
$\bar{u}\in\pi(\mathcal{C}_i\setminus\Phi_i^{-1}(0))$. 

Recall that $0$ is a regular value of $\Phi_i$ as proved in Theorem
\ref{th::u4} and hence 
$\dim(\Phi_i^{-1}(0))=r\times n(d)$. It follows from Proposition
\ref{DimensionProposition} that
\[
	\dim(\mathscr{L}_i)\le
	\dim(\pi(\mathcal{C}_i\setminus\Phi_i^{-1}(0)))\le
	\dim(\mathcal{C}_i\setminus\Phi_i^{-1}(0))<\dim(\Phi_i^{-1}(0))=r\times
	n(d).
\]
Let $\mathscr{U}_5:=\mathscr{U}_3\setminus \overline{\bigcup_{1\le
i\le r}\mathscr{L}_i}$. Then, $\mathscr{U}_5$ is an open and dense
semi-algebraic set in $\RR^{r\times n(d)}$. 

Fix a point $u'\in\mathscr{U}_5$ and a function $f\in\mc(F(x,u'))$. If
$f$ is not good at infinity, then there exist $(x^{(l)})_{l\in\N}$ and
$(\mu_i^{(l)}\in\RR, i\in I(f,x^{(l)}))_{l\in\N}$ such that
$\lim_{l\rightarrow\infty}\Vert x^{(l)}\Vert=+\infty$, 
\[
	\mu_i^{(l)}\ge 0, \sum_{i\in
		I(f,x^{(l)})}\mu_i^{(l)}=1\quad\text{and}\quad
		\lim_{l\rightarrow\infty}\Big\Vert \sum_{i\in
			I(f,x^{(l)})}\mu_i^{(l)}\nabla f_i(x^{(l)})\Big\Vert=0.
\]
By passing to a subsequence if necessary,
we may assume that there exists an index $i^0\in\{1,\ldots,r\}$ such
that $i^0\in I(f,x^{(l)})$ and $\mu^{(l)}_{i^0}>0$ for all $l\in\N$. It
is obvious that
$u'\in\mathscr{L}_{i^0}$, a contradiction. Therefore, there exist
constants $c_f>0$ and $R_f>0$ such that $\mathfrak{m}^{\circ}_f(x)\ge c_f$ for
any $x\in\RR^n$ with $\Vert x\Vert\ge R_f$. Let $c:=\min\{c_f :
f\in\mc(F(x,u'))\}$ and $R:=\max\{R_f : f\in\mc(F(x,u'))\}$. 
Since $\mc(F(x,u'))$ has only finitely many functions by Theorem
\ref{th::Lip}, $c$ and $R$ are positive. 
Clearly, the conclusion holds for $c$ and $R$.
\end{proof}

By means of Theorem \ref{th::u5}, we can establish the coercivity of
CSP functions selected from polynomials with generic coefficients.

\begin{theorem}\label{th::u5-2}
	Let $\mathscr{U}_5$ be the open and dense semi-algebraic set in
	$\RR^{r\times n(d)}$ in Theorem \ref{th::u5}. For any
	$u\in\mathscr{U}_5$ and $f\in\mc(F(x,u)),$ if $f$ is bounded from
	below, then there exist constants $\td{c}>0$ and $\wt{R}>0$ such
	that $f(x)\ge \td{c}\Vert x\Vert$ for any $x\in\RR^n$ with $\Vert
	x\Vert\ge \wt{R}$. In particular$,$ $f$ is coercive on $\RR^n$. 
\end{theorem}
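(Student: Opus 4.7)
The plan is to combine the positive slope at infinity from Theorem~\ref{th::u5} with Ekeland's Variational Principle (Theorem~\ref{th::ekeland}) via a contradiction/scaling argument. Let $c>0$ and $R>0$ be the constants from Theorem~\ref{th::u5} so that $\mathfrak{m}^{\circ}_f(x)\ge c$ whenever $\|x\|\ge R$. Since $f$ is bounded below, set $m:=\inf_{\RR^n}f\in\RR$. I will show that the constants $\wt{c}:=c/3$ and some sufficiently large $\wt{R}$ work.

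\textbf{Main step (contradiction via Ekeland).} Suppose, to the contrary, that for every $\wt{R}>0$ there exists $x$ with $\|x\|\ge \wt{R}$ and $f(x)<\wt{c}\|x\|$. Then I can extract a sequence $\{x^{(n)}\}$ with $\|x^{(n)}\|\to\infty$ and $f(x^{(n)})<\wt{c}\|x^{(n)}\|$. The easy subcase is $f(x^{(n)})=m$ for infinitely many $n$: then $x^{(n)}$ is a global minimizer, so by Proposition~\ref{prop::clarke}(i) we have $0\in\partial^{\circ}f(x^{(n)})$, i.e.\ $\mathfrak{m}^{\circ}_f(x^{(n)})=0$, which contradicts $\mathfrak{m}^{\circ}_f(x^{(n)})\ge c$ for $n$ large. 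Otherwise, for each large $n$ set
\[
\varepsilon_n:=f(x^{(n)})-m>0,\qquad \lambda_n:=\tfrac{1}{2}\|x^{(n)}\|,
\]
and apply Theorem~\ref{th::ekeland} to $f$ at $x^0=x^{(n)}$ with these parameters. This yields $y^{(n)}\in\RR^n$ with $\|y^{(n)}-x^{(n)}\|\le\lambda_n$, hence $\|y^{(n)}\|\ge\|x^{(n)}\|/2\to\infty$, and with $f(y^{(n)})\le f(x)+(\varepsilon_n/\lambda_n)\|x-y^{(n)}\|$ for every $x\in\RR^n$. The latter inequality means $y^{(n)}$ is a global minimizer of $f+(\varepsilon_n/\lambda_n)\|\cdot-y^{(n)}\|$, so combining Proposition~\ref{prop::clarke}(i), Proposition~\ref{prop::clarke}(ii), and Example~\ref{exam:1} gives some $\zeta^{(n)}\in\partial^{\circ}f(y^{(n)})$ with $\|\zeta^{(n)}\|\le \varepsilon_n/\lambda_n$, i.e.\
\[
\mathfrak{m}^{\circ}_f(y^{(n)})\ \le\ \frac{\varepsilon_n}{\lambda_n}\ =\ \frac{2(f(x^{(n)})-m)}{\|x^{(n)}\|}\ <\ 2\wt{c}-\frac{2m}{\|x^{(n)}\|}\ \longrightarrow\ 2\wt{c}=\tfrac{2c}{3}.
\]
For all large $n$ we then have $\mathfrak{m}^{\circ}_f(y^{(n)})<c$ while $\|y^{(n)}\|\ge R$, contradicting Theorem~\ref{th::u5}.

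\textbf{Conclusion.} This contradiction produces the desired $\wt{R}>0$ with $f(x)\ge\wt{c}\|x\|$ for all $\|x\|\ge\wt{R}$, and coercivity of $f$ is immediate since $\wt{c}\|x\|\to+\infty$ as $\|x\|\to+\infty$. The only subtle point, and what I expect to be the main obstacle in writing the argument cleanly, is the justification of the slope bound $\mathfrak{m}^{\circ}_f(y^{(n)})\le\varepsilon_n/\lambda_n$ from Ekeland's conclusion; this requires invoking the sum rule for the Clarke subdifferential (Proposition~\ref{prop::clarke}(ii)) together with the explicit computation of $\partial^{\circ}\|\cdot-y^{(n)}\|$ in Example~\ref{exam:1}, and carefully verifying that the degenerate cases $f(x^{(n)})=m$ and $x^{(n)}=y^{(n)}$ are handled without circularity.
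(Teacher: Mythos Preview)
Your proof is correct and follows essentially the same approach as the paper: both combine the ``good at infinity'' slope bound from Theorem~\ref{th::u5} with Ekeland's Variational Principle applied at a point $x^{0}$ violating the claimed linear growth, choosing $\lambda=\tfrac12\|x^{0}\|$ so that the Ekeland point $y^{0}$ stays in the region $\|y^{0}\|\ge R$, and then bounding $\mathfrak{m}^{\circ}_f(y^{0})\le\varepsilon/\lambda$ below $c$ via Proposition~\ref{prop::clarke} and Example~\ref{exam:1}. The only cosmetic differences are that the paper works with a single point and explicit constants $\td{c}=c/4$, $\wt{R}=\max\{2R,\,4|f^*|/c\}$ (so the contradiction is reached in one step), whereas you use $\td{c}=c/3$ and a sequence/limit argument to absorb the $-2m/\|x^{(n)}\|$ term; both choices work and neither involves any additional idea.
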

\begin{proof}
	Let $c>0$ and $R>0$ be the constants in the statement of Theorem
	\ref{th::u5}.
	Fix a point $u\in\mathscr{U}_5$	and a function $f\in\mc(F(x,u))$.
	Then, $\mathfrak{m}^{\circ}_f(x)\ge c$ for any $x\in\RR^n$ with
	$\Vert x\Vert\ge R$. Denote $f^*:=\inf_{x\in\RR^n} f(x)$ and assume
	that $f^*>-\infty$. Let 
	\[
		\td{c}:=\frac{c}{4}>0\quad\text{and}\quad
		\wt{R}:=\max\left\{2R, \frac{4|f^*|}{c}\right\}>0.
	\]
	We will show that $f(x)\ge \td{c}\Vert x\Vert$ for any $x\in\RR^n$
	with $\Vert x\Vert\ge \wt{R}$. 
	
	To the contrary, suppose that there exists a point
	$x^{0}\in\RR^n$ such that $\Vert x^{0}\Vert\ge \wt{R}$ and
	$f(x^{0})<\td{c}\Vert x^{0}\Vert$. As $\Vert x^{0}\Vert\ge
	R$, we have $\mathfrak{m}^{\circ}_f(x^{0})\ge c$. Then,
	$f(x^{0})>f^*$ since otherwise $x^{0}\in\cri(f)$ and
	$\mathfrak{m}^{\circ}_f(x^{0})=0$. Let $\varepsilon:=f(x^{0})-f^*>0$ and
	$\lambda:=\frac{\Vert x^{0}\Vert}{2}>0$. Then, by the Ekeland
	Variation Principle (Theorem \ref{th::ekeland}), there is some
	point $y^{0}\in\RR^n$ such that 
	\[
		\begin{aligned}
			&f(y^{0})\le f(x^{0}),\\
			&\Vert y^{0}-x^{0}\Vert\le\lambda,\\
			f(y^{0})\le f(x)&+\frac{\varepsilon}{\lambda}\Vert
			x-y^{0}\Vert\quad \text{for all}\ x\in\RR^n.
		\end{aligned}
	\]
	We have 
	\[
		\Vert y^{0}\Vert\ge \Vert x^{0}\Vert-\Vert
		x^{0}-y^{0}\Vert\ge \Vert
		x^{0}\Vert-\lambda=\frac{\Vert x^{0}\Vert}{2}\ge R. 
	\]
	Obviously, $y^{0}$ is a global minimizer of the function
	\[
		\RR^n \rightarrow \RR,\quad x \mapsto
		f(x)+\frac{\varepsilon}{\lambda}\Vert x-y^{0}\Vert.
	\]
	Then by Proposition \ref{prop::clarke} and Example~\ref{exam:1}, we get
	\[
		\begin{aligned}
			0\in \partial^{\circ}\left(f(\cdot)+\frac{\varepsilon}{\lambda}\Vert
			\cdot -y^{0}\Vert\right)(y^{0})&\subseteq \partial^{\circ}
			f(y^{0})+\frac{\varepsilon}{\lambda}\partial^{\circ}\Vert
			\cdot -y^{0}\Vert(y^{0}) &= \partial^{\circ}
			f(y^{0})+\frac{\varepsilon}{\lambda}\bar{\mathbb{B}}.
		\end{aligned}
	\]
	By definition,
	\[
		\begin{aligned}
			\mathfrak{m}^{\circ}_f(y^{0})&\le
			\frac{\varepsilon}{\lambda}=\frac{2(f(x^{0})-f^*)}{\Vert
				x^{0}\Vert}< \frac{2(\td{c}\Vert
					x^{0}\Vert-f^*)}{\Vert x^{0}\Vert}\\
					& \le \frac{2(\td{c}\Vert x^{0}\Vert+|f^*|)}{\Vert x^{0}\Vert}
					\le \frac{2(\td{c}\Vert
					x^{0}\Vert+\td{c}\wt{R})}{\Vert x^{0}\Vert}\le
					4\td{c}=c,
				\end{aligned}
	\]
	a contradiction.
\end{proof}
By Corollary \ref{cor::u4} and Theorem \ref{th::u5-2}, 
if a CSP function selected from polynomials with generic coefficients
is bounded from below, then its global minimum is attainable at a
unique minimizer.

\section{Discussions on non-smooth {\L}ojasiewicz's inequality and
error bound}\label{sec::errorbound}

An error bound for a set $S\subset\RR^n$ is an inequality that bounds the
distance from an arbitrary point $x$ in a test set to $S$ in terms of
the amount of ``constraint violation'' at $x$, called its {\itshape
residual}. Error bounds have numerous applications in many fields. For
example, they can be used to establish the rate of convergence of many
optimization methods. 

Some results about
error bounds with explicit exponents for the polynomial system
\[
	S:=\{x\in\RR^n : f_1(x)\le 0, \ldots, f_r(x)\le 0\}\quad \text{where}\ 
	f_1,\ldots,f_r\in\RR[x],
\]
are derived in \cite{DHP2017,LMP2015}. Precisely, they define the
non-smooth slope of a continuous function $f$ by its limiting
subdifferential $\partial f$ (compare with Definition \ref{def::slope})
\[
	\mathfrak{m}_f(x)=\inf\{\Vert v\Vert : v\in\partial f(x)\}. 
\]
Then, a non-smooth {\L}ojasiewicz's inequality about the non-smooth
slope is established for the maximum function 
\[
	f_{\max}(x):=\max\{f_1(x),\ldots,f_r(x)\}. 
\]
By invoking the Ekeland Varaitional Principle and the properties in
Proposition \ref{prop::clarke} for the limiting subdifferential (see
Remark \ref{rk::pp}), some local and global H\"olderian error
bounds with explicit exponents for $S$ are obtained. 

Note that the maximum function $f_{\max}$ belongs to the set
$\mc(f_1,\ldots,f_r)$.
According to \cite[Theorem 3.46 (ii)]{Mordukhovich2006}, the non-smooth
slopes $\mathfrak{m}_{f_{\max}}(x)$ for $f_{\max}$ defined via the limiting
subdifferential and $\mathfrak{m}^{\circ}_f(x)$ in Definition
\ref{def::slope} for any $f\in\mc(f_1,\ldots,f_r)$ defined via the Clarke
subdifferential have the same representation as in \eqref{eq::mf}.
Note also that the properties in Proposition \ref{prop::clarke} hold for
both the Clarke subdifferential and the limiting subdifferential.
Consequently, some results about
non-smooth {\L}ojasiewicz's inequality and error bounds for the
maximun function $f_{\max}$ in \cite{DHP2017,LMP2015} are also valid
for any function $f\in\mc(f_1,\ldots,f_r)$. In the following, we
specify some of them and refer the readers to \cite{DHP2017,LMP2015}
or \cite[Section 3]{HaHV2017} for the proofs and more analogous
results. 

The following non-smooth {\L}ojasiewicz's inequality holds for any CSP
functions.

\begin{theorem}\label{th::mf}
	Let $f\in\mc(f_1,\ldots,f_r)$ where $f_1,\ldots,f_r\in\RR[x]$
	with degree bounded by $d$ and $\bar{x}\in\mathbb{R}^n$ with $f(\bar{x})=0$.
Then there are numbers $c>0$ and $\varepsilon>0$ such that
\[
	\mathfrak{m}^{\circ}_{f}(x)\ge
	c|f(x)|^{1-\frac{1}{\mathscr{L}(n,d,r)}},\quad\text{for all}\
	x\in\mathbb{R}^n\ \text{with}\
	\Vert x-\bar{x}\Vert\le \varepsilon,
\]
where 
\begin{equation}\label{eq::LL}
	\mathscr{L}(n,d,r):=(d+1)(3d)^{n+r-2}. 
\end{equation}
\end{theorem}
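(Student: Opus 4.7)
The plan is to reduce the statement directly to the non-smooth {\L}ojasiewicz inequality for maximum functions of polynomials established in \cite{DHP2017,LMP2015}, leveraging the observation emphasized throughout this section that, by the representation \eqref{eq::mf}, the Clarke non-smooth slope of any CSP function coincides with the limiting slope of a suitable maximum function whenever their active index sets agree.

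First I would verify that the active-index map $I(f,\cdot)$ is upper semi-continuous at $\bar{x}$: if $x^{(k)}\to\bar{x}$ with a fixed $i\in I(f,x^{(k)})$ for all $k$, then continuity of $f_i-f$ forces $f_i(\bar{x})=f(\bar{x})$, so $i\in I(f,\bar{x})$. A pigeonhole argument over the finitely many indices in $\{1,\ldots,r\}$ then yields some $\varepsilon_0>0$ with $I(f,x)\subseteq I(f,\bar{x})$ for every $x\in B(\bar{x},\varepsilon_0)$. Because $f_j(\bar{x})=f(\bar{x})=0$ for each $j\in I(f,\bar{x})$, the auxiliary polynomial maximum function $g_J:=\max_{j\in J}f_j$ vanishes at $\bar{x}$ for every non-empty subset $J\subseteq I(f,\bar{x})$.

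Next, for each such $J$ I would invoke the non-smooth {\L}ojasiewicz inequality of \cite{DHP2017,LMP2015} applied to $g_J$, which is a maximum of $|J|\le r$ polynomials of degree at most $d$. This yields constants $c_J>0$, $\varepsilon_J>0$ such that
\[
\mathfrak{m}^{\circ}_{g_J}(x)\ge c_J\,|g_J(x)|^{1-1/\mathscr{L}(n,d,|J|)}\qquad\text{for all }x\in B(\bar{x},\varepsilon_J),
\]
where by the remark preceding the statement the Clarke and limiting slopes of $g_J$ agree and are both given by the convex-combination formula \eqref{eq::mf}.

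Finally I would set $c:=\min_J c_J$ and choose $\varepsilon\in(0,\min\{\varepsilon_0,\min_J\varepsilon_J\}]$ small enough that $|f(x)|<1$ on $B(\bar{x},\varepsilon)$, the minima ranging over the finitely many non-empty $J\subseteq I(f,\bar{x})$. For any $x\in B(\bar{x},\varepsilon)$ with $f(x)\neq 0$, setting $J:=I(f,x)\subseteq I(f,\bar{x})$ one has $g_J(x)=f(x)$ (since every $f_j(x)$ with $j\in J$ equals the common value $f(x)$) and $I(g_J,x)=J=I(f,x)$, so \eqref{eq::mf} delivers the crucial identification $\mathfrak{m}^{\circ}_f(x)=\mathfrak{m}^{\circ}_{g_J}(x)$. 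Combining this with the above {\L}ojasiewicz estimate for $g_J$ and the inequality $|f(x)|^{1-1/\mathscr{L}(n,d,|J|)}\ge|f(x)|^{1-1/\mathscr{L}(n,d,r)}$ (which holds because $|f(x)|<1$ and $\mathscr{L}(n,d,\cdot)$ is non-decreasing in its third argument) produces the desired bound with constant $c$ and radius $\varepsilon$. The principal delicate point is the slope identification $\mathfrak{m}^{\circ}_f(x)=\mathfrak{m}^{\circ}_{g_J}(x)$ through the shared representation \eqref{eq::mf}; without this bridge the reduction to the pre-existing maximum-function result would not close, and the rest of the argument would have to be reconstructed from scratch along semi-algebraic curves supplied by the Curve Selection Lemma.
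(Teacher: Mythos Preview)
Your argument is correct, but it differs from the paper's route. The paper does not reduce to auxiliary maxima $g_J$; it simply remarks that the \emph{proof} of \cite[Theorem~3.2]{HaHV2017} for the maximum function uses nothing about $f_{\max}$ beyond the slope representation \eqref{eq::mf}, and since that same representation holds for the Clarke slope of any $f\in\mc(f_1,\ldots,f_r)$, the argument transfers verbatim. In other words, the paper re-runs the existing proof with $f$ in place of $f_{\max}$, while you apply the existing \emph{statement} as a black box to each $g_J$ and bridge pointwise via the identification $\mathfrak{m}^{\circ}_f(x)=\mathfrak{m}^{\circ}_{g_J}(x)$ at $J=I(f,x)$. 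Your approach costs the upper-semicontinuity step for $I(f,\cdot)$, the enumeration over subsets $J\subseteq I(f,\bar x)$, and the exponent adjustment via $|f(x)|<1$ and monotonicity of $\mathscr{L}(n,d,\cdot)$; in exchange it is fully self-contained and does not ask the reader to open \cite{HaHV2017} and verify that nothing max-specific is used. The paper's route is shorter but leans on that verification.
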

\begin{proof}
	In view of the above discussions, it follows from the proof of
	\cite[Theorem 3.2]{HaHV2017}. 
\end{proof}

Some results about local and global H\"olderian error bounds for CSP
functions are listed below.
\begin{theorem}
Let $f\in\mc(f_1,\ldots,f_r)$ where $f_1,\ldots,f_r\in\RR[x]$
with degree bounded by $d$ and $S:=\{x\in\RR^n : f(x)\le
0\}\neq\emptyset$. Then for any
compact $K \subset \mathbb{R}^n,$ there exists a constant $c > 0$ such
that 
\begin{eqnarray*}
	\frac{c}{\mathscr{L}(n,d,r)}\ \mathrm{dist}(x, S)& \le&
	[f(x)]_+^{\frac{1}{\mathscr{L}(n,d,r)}},  \quad \textrm{ for all }
	\quad x \in K,
\end{eqnarray*}
where $[f(x)]_+:=\max\{f(x),0\}$ and $\mathrm{dist}(x, S)$ denotes the
Euclidean distance from $x$ to $S$. 
\end{theorem}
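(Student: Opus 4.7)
The plan is to deduce this error bound from the non-smooth {\L}ojasiewicz inequality (Theorem \ref{th::mf}) via the Ekeland variational principle (Theorem \ref{th::ekeland}), following the scheme of \cite{DHP2017,LMP2015}. For $x\in S$ both sides vanish, so fix $x\in K$ with $f(x)>0$, and write $\alpha:=1/\mathscr{L}(n,d,r)$ and $g(y):=[f(y)]_+^{\alpha}$, which is continuous, nonnegative, and satisfies $\inf g=0$.

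The first step is to upgrade Theorem \ref{th::mf} to a uniform statement on $K$. The set $K\cap\{f=0\}$ is compact, and to each of its points Theorem \ref{th::mf} associates a radius and a constant; a finite subcover together with a Lebesgue number argument then yields $c_0>0$ and $\varepsilon_0>0$ such that
\[
\mathfrak{m}^{\circ}_f(y)\;\ge\;c_0\,f(y)^{1-\alpha}\qquad\text{whenever}\quad y\in K\ \text{and}\ \mathrm{dist}(y,S)<\varepsilon_0.
\]

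The main argument splits into a near-$S$ and a far-from-$S$ case. If $\mathrm{dist}(x,S)\ge \varepsilon_0/2$, then $x$ lies in the compact set $\{z\in K:\mathrm{dist}(z,S)\ge \varepsilon_0/2\}$, on which $f$ is continuous and strictly positive; hence $[f(x)]_+\ge m$ for some uniform $m>0$, while $\mathrm{dist}(x,S)$ is bounded above on $K$, and the inequality follows directly by choosing $c$ small enough. If instead $0<\mathrm{dist}(x,S)<\varepsilon_0/2$, I apply Theorem \ref{th::ekeland} to $g$ with $\varepsilon:=g(x)$ and $\lambda:=\mathrm{dist}(x,S)/2$. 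This produces $y^{0}$ with $\|y^{0}-x\|\le \lambda<\mathrm{dist}(x,S)$, forcing $y^{0}\notin S$, so $f(y^{0})>0$ and $\mathrm{dist}(y^{0},S)<\varepsilon_0$; moreover $y^{0}$ is a global minimizer of $y\mapsto g(y)+(g(x)/\lambda)\|y-y^{0}\|$. By Proposition \ref{prop::clarke}(i),(ii) and Example \ref{exam:1} this gives $0\in\partial^{\circ} g(y^{0})+(g(x)/\lambda)\bar{\mathbb{B}}$, hence
\[
\mathfrak{m}^{\circ}_g(y^{0})\;\le\;\frac{g(x)}{\lambda}\;=\;\frac{2\,[f(x)]_+^{\alpha}}{\mathrm{dist}(x,S)}.
\]
On the other hand, Proposition \ref{prop::clarke}(iii) and the uniform {\L}ojasiewicz estimate above yield
\[
\mathfrak{m}^{\circ}_g(y^{0})\;=\;\alpha\,f(y^{0})^{\alpha-1}\,\mathfrak{m}^{\circ}_f(y^{0})\;\ge\;\alpha\, c_0.
\]
Combining the two bounds gives $\mathrm{dist}(x,S)\le (2/(\alpha c_0))[f(x)]_+^{\alpha}$, i.e. the claim with $c:=c_0/2$ (taking a possibly smaller $c$ to cover the far-from-$S$ regime).

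The main obstacle is the uniformity step: Theorem \ref{th::mf} is strictly local at each zero of $f$, and converting those pointwise constants into a single pair $(c_0,\varepsilon_0)$ valid on the compact $K$ is the one genuinely delicate estimate. Tied to this is the need to ensure the Ekeland point $y^{0}$ actually lies in the uniform neighborhood where the {\L}ojasiewicz inequality applies; this is why I take $\lambda=\mathrm{dist}(x,S)/2$ rather than $\mathrm{dist}(x,S)$, as this strict inequality is what simultaneously prevents $y^{0}\in S$ and keeps $y^{0}$ within distance $\varepsilon_0$ of $S$.
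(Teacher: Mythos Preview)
Your approach is essentially the paper's: pass from the non-smooth {\L}ojasiewicz inequality (Theorem \ref{th::mf}) to a local error bound via Ekeland, then globalize over $K$ by compactness. The paper organizes this the other way round---it proves the error bound pointwise first (packaging the Ekeland step into \cite[Lemma~3.3]{HaHV2017}) and only then takes a finite subcover---whereas you uniformize the {\L}ojasiewicz inequality first and run Ekeland once; the content is the same.

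There is one technical slip in your uniformity step. Your uniform estimate is asserted for $y\in K$ with $\mathrm{dist}(y,S)<\varepsilon_0$, and you then invoke it at the Ekeland point $y^{0}$; but nothing guarantees $y^{0}\in K$---you only know $\|y^{0}-x\|\le\mathrm{dist}(x,S)/2<\varepsilon_0/4$. The fix is routine: run the cover argument on the enlarged compact $K':=\{y:\mathrm{dist}(y,K)\le 1\}$ and take $\varepsilon_0\le 1$, so that $y^{0}\in K'$ automatically. While you are at it, note that your finite subcover is of $K\cap\{f=0\}$, not of $\{y\in K:\mathrm{dist}(y,S)<\varepsilon_0\}$; to connect the two, observe that $f(y)>0$ and $\mathrm{dist}(y,S)<\varepsilon_0$ force $\mathrm{dist}(y,\{f=0\})<\varepsilon_0$ by the intermediate value theorem, and then a straightforward compactness argument on $K'$ produces the $\varepsilon_0$ you need. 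With these adjustments the argument is complete.
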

\begin{proof}
	As $K$ is compact, we only need to prove that for any
	$\bar{x}\in\RR^n$, there are constants $c(\bar{x})> 0$ and
	$\varepsilon(\bar{x})>0$ such that for all $\Vert x-\bar{x}\Vert\le
	\varepsilon(\bar{x})$,
	\[
		\frac{c(\bar{x})}{\mathscr{L}(n,d,r)}\ \mathrm{dist}(x, S) \le
		[f(x)]_+^{\frac{1}{\mathscr{L}(n,d,r)}}.
	\]
	It is easy to see that the above statement holds for any
	$\bar{x}$ with $f(\bar{x})<0$ or $f(\bar{x})>0$.
	For every $\bar{x}\in K$ with $f(\bar{x})=0$, by Theorem \ref{th::mf}, 
	there are numbers $c(\bar{x})> 0$, 
	$\varepsilon(\bar{x})>0$ which depend on $\bar{x}$ and $0<\frac{1}{\mathscr{L}(n,d,r)}<1$
defined in \eqref{eq::LL}, such that
\[
	\mathfrak{m}^{\circ}_{f}(x)\ge
	c(\bar{x})|f(x)|^{1-\frac{1}{\mathscr{L}(n,d,r)}},\quad\text{for all}\
	x\in\mathbb{R}^n\ \text{with}\
	\Vert x-\bar{x}\Vert\le \varepsilon(\bar{x}).
\]
Then, for all $x\in\mathbb{R}^n$ with $\Vert x-\bar{x}\Vert\le
\frac{\varepsilon(\bar{x})}{2}$,
by \cite[Lemma 3.3]{HaHV2017}, we have 
\[
	\frac{c(\bar{x})}{\mathscr{L}(n,d,r)}\mathrm{dist}(x, S)\le
	[f(x)]_+^{\frac{1}{\mathscr{L}(n,d,r)}}.
\]
Thus, the conclusion follows. 
\end{proof}

\begin{theorem}\label{th::eb}
Let $f\in\mc(f_1,\ldots,f_r)$ where $f_1,\ldots,f_r\in\RR[x]$
with degree bounded by $d$ and $S:=\{x\in\RR^n : f(x)\le
0\}\neq\emptyset$. 
Assume that there exist constants $c>0$ and $R>0$ such that
$\mathfrak{m}^{\circ}_f(x)\ge c$ for any $x\in\RR^n$ with $\Vert
x\Vert\ge R,$ then there exist constants $\bar{c}>0$ and $\alpha>0$
such that 
\begin{equation}\label{eq::eb}
	\bar{c}\ \mathrm{dist}(x, S)\le
	[f(x)]_+^{\alpha}+[f(x)]_+,\quad\text{for all}\ x\in\RR^n,
\end{equation}
i.e., $f$ admits a global H\"olderian error bound.
\end{theorem}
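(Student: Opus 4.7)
The plan is to combine the local H\"olderian error bound from the previous theorem, applied on the compact ball $K:=\{x\in\RR^n\colon\Vert x\Vert\le R\}$, with the non-smooth slope control at infinity via Ekeland's Variational Principle. Set $\alpha:=1/\mathscr{L}(n,d,r)\in(0,1]$ and let $c_1>0$ be the constant produced by the previous theorem with this choice of $K$, so that $\mathrm{dist}(x,S)\le c_1[f(x)]_+^{\alpha}$ for every $x\in K$. The remaining task is to establish an analogous estimate outside $K$, which is where the hypothesis on $\mathfrak{m}^{\circ}_f$ enters.

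For a test point $x$ with $\Vert x\Vert>R$ and $f(x)>0$, I would apply Theorem \ref{th::ekeland} to the continuous function $\bar{f}(y):=[f(y)]_+$, which is bounded below by $0=\inf\bar{f}$ (using $S\neq\emptyset$). Taking $\varepsilon:=f(x)=\bar{f}(x)$ and $\lambda:=2f(x)/c$, Ekeland's principle produces $y^0\in\RR^n$ with $\Vert y^0-x\Vert\le\lambda$, $\bar{f}(y^0)\le f(x)$, and
\[
\bar{f}(y^0)\le\bar{f}(y)+\tfrac{c}{2}\Vert y-y^0\Vert\quad\text{for all}\ y\in\RR^n.
\]
If $y^0\in S$, then $\mathrm{dist}(x,S)\le\Vert x-y^0\Vert\le(2/c)[f(x)]_+$ already. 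Otherwise $f(y^0)>0$, so $\bar{f}$ coincides with $f$ on a neighborhood of $y^0$, and $y^0$ is an unconstrained minimizer of the locally Lipschitz function $f(\cdot)+\tfrac{c}{2}\Vert\cdot-y^0\Vert$; the Clarke sum rule of Proposition \ref{prop::clarke} together with Example \ref{exam:1} then yields $0\in\partial^{\circ}f(y^0)+\tfrac{c}{2}\bar{\mathbb{B}}$, i.e.\ $\mathfrak{m}^{\circ}_f(y^0)\le c/2$.

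Since $c/2<c$, the hypothesis $\mathfrak{m}^{\circ}_f\ge c$ on $\{\Vert\cdot\Vert\ge R\}$ forces $\Vert y^0\Vert<R$, hence $y^0\in K$. Applying the compact-set error bound at $y^0$ and using $f(y^0)\le f(x)$ gives $\mathrm{dist}(y^0,S)\le c_1 [f(x)]_+^{\alpha}$, so the triangle inequality produces
\[
\mathrm{dist}(x,S)\le\Vert x-y^0\Vert+\mathrm{dist}(y^0,S)\le\tfrac{2}{c}[f(x)]_++c_1[f(x)]_+^{\alpha}.
\]
Combining this with the $K$-case (and the trivial case $x\in S$) yields \eqref{eq::eb} with $\bar{c}:=1/\max\{c_1,2/c\}$ and exponent $\alpha=1/\mathscr{L}(n,d,r)$.

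The main obstacle is the calibration of $\lambda$ in Ekeland's principle: the choice $\lambda=2f(x)/c$ is the balanced one that simultaneously makes $\varepsilon/\lambda=c/2$ strictly below the infimum slope $c$ at infinity, thereby ruling out the alternative ``$y^0\notin S$ and $\Vert y^0\Vert\ge R$,'' while keeping the displacement $\Vert y^0-x\Vert$ linear in $[f(x)]_+$ so that the linear term in \eqref{eq::eb} is recovered. A secondary technicality is that $f$ itself may fail to be bounded from below, which is why Ekeland must be applied to the truncated function $[f]_+$, whose Clarke subdifferential coincides with that of $f$ on the open set $\{f>0\}$ where the critical-point argument takes place.
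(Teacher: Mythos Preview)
Your proof is correct and follows precisely the strategy the paper intends: the paper's own proof simply refers to \cite[Theorem~3.6]{HaHV2017} with the limiting-subdifferential slope replaced by $\mathfrak{m}^{\circ}_f$, and the argument there is exactly the one you carry out---combine the compact-set H\"olderian error bound on $\{\Vert x\Vert\le R\}$ with Ekeland's principle applied to $[f]_+$ outside that ball, calibrating $\lambda$ so that $\varepsilon/\lambda<c$ forces the Ekeland point $y^0$ either into $S$ or into the compact ball. Your handling of the two technicalities (applying Ekeland to $[f]_+$ rather than $f$, and the fact that $\partial^{\circ}[f]_+=\partial^{\circ}f$ on $\{f>0\}$) is correct and makes explicit details the reference takes for granted.
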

\begin{proof}
	It can be shown by replacing $\mathfrak{m}_f(x)$ by
	$\mathfrak{m}^{\circ}_f(x)$ in the proof of \cite[Theorem 3.6]{HaHV2017}. 
\end{proof}

In fact, the global H\"olderian error bound \eqref{eq::eb} is
a generic property for CSP functions. 

\begin{corollary}\label{cor::u5}
	Let $\mathscr{U}_5$ be the open and dense semi-algebraic set in
	$\RR^{r\times n(d)}$ in Theorem \ref{th::u5}. Then for any
	$u\in\mathscr{U}_5$ and $f\in\mc(F(x,u)),$ $f$ admits the global
	H\"olderian error bound \eqref{eq::eb}. 
\end{corollary}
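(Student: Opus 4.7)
The plan is to combine Theorem \ref{th::u5} with Theorem \ref{th::eb} in a direct, almost mechanical way. Fix an arbitrary $u\in\mathscr{U}_5$ and an arbitrary $f\in\mc(F(x,u))$. If $S:=\{x\in\RR^n:f(x)\le 0\}$ is empty, the error bound \eqref{eq::eb} is vacuous (or, if one prefers, can be stated only for $u$ such that $S\neq\emptyset$, noting that this is an implicit hypothesis of \eqref{eq::eb}), so assume $S\neq\emptyset$.

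The first step is to invoke Theorem \ref{th::u5}: since $u\in\mathscr{U}_5$ and $f\in\mc(F(x,u))$, there exist constants $c>0$ and $R>0$, depending on $u$ and $f$, such that
\[
\mathfrak{m}^{\circ}_f(x)\ge c\qquad\text{for all } x\in\RR^n\ \text{with}\ \Vert x\Vert\ge R.
\]
This is exactly the ``goodness at infinity'' hypothesis required by Theorem \ref{th::eb}.

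The second step is to apply Theorem \ref{th::eb} verbatim with these constants $c$ and $R$ to conclude that there exist $\bar{c}>0$ and $\alpha>0$ (again depending on $u$ and $f$) such that
\[
\bar{c}\ \mathrm{dist}(x,S)\le [f(x)]_+^{\alpha}+[f(x)]_+\qquad\text{for all } x\in\RR^n,
\]
which is precisely the global H\"olderian error bound \eqref{eq::eb}.

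There is no real obstacle in this argument: the corollary is the composition of two results already established in the paper, and the only mild subtlety is the feasibility assumption $S\neq\emptyset$, which is implicit in the statement of \eqref{eq::eb}. In particular, no additional semi-algebraic or subdifferential calculation is needed beyond what is packaged inside Theorems \ref{th::u5} and \ref{th::eb}.
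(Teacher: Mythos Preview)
Your proof is correct and matches the paper's own argument exactly: the corollary is obtained by directly combining Theorem~\ref{th::u5} with Theorem~\ref{th::eb}, with the feasibility assumption $S\neq\emptyset$ being the only (implicit) side condition.
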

\begin{proof}
	It follows from Theorems \ref{th::u5} and \ref{th::eb}. 
\end{proof}

Some other global H\"olderian error bounds for functions 
in $\mc(f_1,\ldots,f_r)$ where $ f_1,\ldots,f_r\in\RR[x]$ can be
derived under the {\itshape Palais-Smale condition} or the
{\itshape non-degeneracy condition} on $f_1,\ldots,f_r$. We refer the
reader to \cite[Section 3]{HaHV2017} for the details and analogous
proofs.


\section{Conclusions}\label{sec::con}
In this paper, we have obtained some properties satisfied by all CSP functions
selected from a given set of finitely many polynomials. In particular, we
show that there are only finitely many of such CSP functions and each
of them is semi-algebraic. Then, we derive the following generic
properties for all CSP functions
selected from the same set of finitely many polynomials: (i) the
critical points of all those CSP function are finite and the
corresponding critical values are all distinct; (ii) each of those CSP
functions is ``good at infinity''; (iii) each of those CSP functions
which is bounded from below is coercive. We have also discussed some
results about the non-smooth {\L}ojasiewicz's inequality and error
bound for CSP functions. 
The stability and genericity for CSP function
optimization problems over semi-algebraic sets will be considered in
the future work.

\subsection*{Acknowledgments}
The authors wish to thank Ti\'{\^{e}}n-S\OW n Ph\d{a}m for
kindly providing us the papers \cite{LeePham2016,LeePham2017} and many
fruitful discussions.
Feng Guo is supported by the Chinese National Natural Science
Foundation under grant 11571350, the Fundamental Research
Funds for the Central Universities. 
Liguo Jiao is supported by Jiangsu Planned Projects for Postdoctoral Research
Funds 2019 (no. 2019K151).
Do Sang Kim is supported by the National Research Foundation of Korea Grant funded by
the Korean Government (NRF-2019R1A2C1008672).


\begin{thebibliography}{10}

\bibitem{Absil2006}
P.~A. Absil and K.~Kurdyka.
\newblock On the stable equilibrium points of gradient systems.
\newblock {\em Systems \& Control Letters}, 55(7):573 -- 577, 2006.

\bibitem{APS1997}
A.~A. Agrachev, D.~Pallaschke, and S.~Scholtes.
\newblock On {Morse} theory for piecewise smooth functions.
\newblock {\em Journal of Dynamical and Control Systems}, 3(4):449--469, 1997.

\bibitem{AHO1997}
F.~Alizadeh, J.-P.~A. Haeberly, and M.~L. Overton.
\newblock Complementarity and nondegeneracy in semidefinite programming.
\newblock {\em Mathematical Programming}, 77(1):111--128, 1997.

\bibitem{Aze}
D.~Az{\'e}.
\newblock A survey on error bounds for lower semicontinuous functions.
\newblock {\em ESAIM: Proceedings}, 13:1--17, 2003.

\bibitem{BARTELS1995385}
S.~G. Bartels, L.~Kuntz, and S.~Scholtes.
\newblock Continuous selections of linear functions and nonsmooth critical
  point theory.
\newblock {\em Nonlinear Analysis: Theory, Methods \& Applications}, 24(3):385
  -- 407, 1995.

\bibitem{RASS}
R.~Benedetti and J.~Risler.
\newblock {\em Real Algebraic and Semi-algebraic Sets}.
\newblock Hermann, Paris, 1991.

\bibitem{Bochnak1998}
J.~Bochnak, M.~Coste, and M.-F. Roy.
\newblock {\em Real Algebraic Geometry}, volume~36.
\newblock Springer-Verlag, New York, 1998.

\bibitem{BDL}
J.~Bolte, A.~Daniilidis, and A.~S. Lewis.
\newblock Generic optimality conditions for semialgebraic convex programs.
\newblock {\em Mathematics of Operations Research}, 36(1):55--70, 2011.

\bibitem{BNTPS}
J.~Bolte, T.~P. Nguyen, J.~Peypouquet, and B.~W. Suter.
\newblock From error bounds to the complexity of first-order descent methods
  for convex functions.
\newblock {\em Mathematical Programming}, 165(2):471--507, 2017.

\bibitem{Bounkhel2012}
M.~Bounkhel.
\newblock {\em Regularity Concepts in Nonsmooth Analysis, Theory and
  Applications}.
\newblock Springer Optimization and Its Applications book series ({SOIA}), Vol.
  59. Springer, New York, 2012.

\bibitem{CHANEY1990649}
R.~W. Chaney.
\newblock Piecewise ${C}^k$ functions in nonsmooth analysis.
\newblock {\em Nonlinear Analysis: Theory, Methods \& Applications}, 15(7):649
  -- 660, 1990.

\bibitem{Clarke1990}
F.~Clarke.
\newblock {\em Optimization and Nonsmooth Analysis}.
\newblock Society for Industrial and Applied Mathematics, 1990.

\bibitem{Cox-Little-OShea:UAG2005}
D.~A. Cox, J.~Little, and D.~O'Shea.
\newblock {\em Using Algebraic Geometry}.
\newblock Graduate Texts in Mathematics. Springer-Verlag, Berlin-Heidelberg-New
  York, 2005.

\bibitem{Cui2018-2}
Y.~Cui, T.~H. Chang, M.~Hong, and J.~S. Pang.
\newblock On the finite number of directional stationary values of piecewise
  programs.
\newblock 2018.
\newblock arXiv:1803.00190.

\bibitem{Cui2018-1}
Y.~Cui and J.~S. Pang.
\newblock A study of piecewise linear-quadratic programs.
\newblock 2018.
\newblock arXiv:1709.05758v2.

\bibitem{DHP2017}
S.~T. Dinh, H.~V. Ha, and T.~S. Pham.
\newblock H{\"o}lder-type global error bounds for non-degenerate polynomial
  systems.
\newblock {\em Acta Mathematica Vietnamica}, 42(3):563--585, 2017.

\bibitem{DIL}
D.~Drusvyatskiy, A.~D. Ioffe, and A.~S. Lewis.
\newblock Generic minimizing behavior in semialgebraic optimization.
\newblock {\em SIAM Journal on Optimization}, 26(1):513--534, 2016.

\bibitem{FHKO}
M.~J. Fabian, R.~Henrion, A.~Y. Kruger, and J.~Outrata.
\newblock Error bounds: Necessary and sufficient conditions.
\newblock {\em Set-Valued and Variational Analysis}, 18(2):121--149, 2010.

\bibitem{FO1982}
O.~Fujiwara.
\newblock Morse programs: A topological approach to smooth constrained
  optimization.
\newblock {\em Mathematics of Operations Research}, 7(4):602--616, 1982.

\bibitem{DRMD}
I.~M. Gelfand, M.~Kapranov, and A.~Zelevinsky.
\newblock {\em Discriminants, Resultants, and Multidimensional Determinants}.
\newblock Mathematics: Theory \& Applications. Birkh\"auser, 1994.

\bibitem{DT}
V.~Guillemin and A.~Pollack.
\newblock {\em Differential Topology}.
\newblock Prentiee-HaIl, New Jersey, 1974.

\bibitem{HaHV2017}
H.~V. H\`a and T.~S. Ph\d{a}m.
\newblock {\em Genericity in polynomial optimization}.
\newblock World Scientific Publishing, Singapore, 2017.

\bibitem{Hager1979}
W.~W. Hager.
\newblock Lipschitz continuity for constrained processes.
\newblock {\em SIAM Journal on Control and Optimization}, 17(3):321--338, 1979.

\bibitem{JP1988}
H.~Jongen and D.~Pallaschke.
\newblock On linearization and continuous selections of functions.
\newblock {\em Optimization}, 19(3):343--353, 1988.

\bibitem{Kruger2015}
A.~Y. Kruger.
\newblock Error bounds and {H}{\"o}lder metric subregularity.
\newblock {\em Set-Valued and Variational Analysis}, 23(4):705--736, 2015.

\bibitem{KUNTZ1995197}
L.~Kuntz and S.~Scholtes.
\newblock Qualitative aspects of the local approximation of a piecewise
  differentiable function.
\newblock {\em Nonlinear Analysis: Theory, Methods \& Applications}, 25(2):197
  -- 215, 1995.

\bibitem{LeePham2016}
G.~M. Lee and T.~S. Ph{\d a}m.
\newblock Stability and genericity for semi-algebraic compact programs.
\newblock {\em Journal of Optimization Theory and Applications},
  169(2):473--495, 2016.

\bibitem{LeePham2017}
G.~M. Lee and T.~S. Ph{\d a}m.
\newblock Generic properties for semialgebraic programs.
\newblock {\em SIAM Journal on Optimization}, 27(3):2061--2084, 2017.

\bibitem{Li2013}
G.~Li.
\newblock Global error bounds for piecewise convex polynomials.
\newblock {\em Mathematical Programming}, 137(1):37--64, 2013.

\bibitem{LMNP}
G.~Li, B.~S. Mordukhovich, T.~T.~A. Nghia, and T.~S. Ph{\d a}m.
\newblock Error bounds for parametric polynomial systems with applications to
  higher-order stability analysis and convergence rates.
\newblock {\em Mathematical Programming}, 168(1):313--346, 2018.

\bibitem{LMP2015}
G.~Li, B.~S. Mordukhovich, and T.~S. Ph{\d a}m.
\newblock New fractional error bounds for polynomial systems with applications
  to {H}{\"o}lderian stability in optimization and spectral theory of tensors.
\newblock {\em Mathematical Programming}, 153(2):333--362, 2015.

\bibitem{Milnor1968}
J.~Milnor.
\newblock {\em Singular Points of Complex Hypersurfaces}, volume~61 of {\em
  Annals of Mathematics Studies}.
\newblock Princeton University Press, Princeton, 1968.

\bibitem{Mordukhovich2006}
B.~S. Mordukhovich.
\newblock {\em Variational Analysis and Generalized differentiation, I: Basic
  Theory, II: Applications}.
\newblock Springer, Berlin, 2006.

\bibitem{NZ2001}
K.~F. Ng and X.~Y. Zheng.
\newblock Error bounds for lower semicontinuous functions in normed spaces.
\newblock {\em SIAM Journal on Optimization}, 12(1):1--17, 2001.

\bibitem{NieDisNon}
J.~Nie.
\newblock Discriminants and nonnegative polynomials.
\newblock {\em Journal of Symbolic Computation}, 47(2):167--191, 2012.

\bibitem{NieFiniteCon}
J.~Nie.
\newblock Optimality conditions and finite convergence of {Lasserre}'s
  hierarchy.
\newblock {\em Mathematical Programming, Ser. A}, 146(1--2):97--121, 2014.

\bibitem{Pang1997}
J.-S. Pang.
\newblock Error bounds in mathematical programming.
\newblock {\em Mathematical Programming}, 79(1):299--332, 1997.

\bibitem{PD1996}
J.-S. Pang and D.~Ralph.
\newblock Piecewise smoothness, local invertibility, and parametric analysis of
  normal maps.
\newblock {\em Mathematics of Operations Research}, 21(2):401--426, 1996.

\bibitem{PT2001}
G.~Pataki and L.~Tun{\c c}el.
\newblock On the generic properties of convex optimization problems in conic
  form.
\newblock {\em Mathematical Programming}, 89(3):449--457, 2001.

\bibitem{DS1997}
D.~Ralph and S.~Scholtes.
\newblock Sensitivity analysis of composite piecewise smooth equations.
\newblock {\em Mathematical Programming}, 76(3):593--612, 1997.

\bibitem{R2003}
R.~T. Rockafellar.
\newblock A property of piecewise smooth functions.
\newblock {\em Computational Optimization and Applications}, 25(1):247--250,
  2003.

\bibitem{Rockafellar98}
R.~T. Rockafellar and R.~Wets.
\newblock {\em Variational Analysis}.
\newblock Grundlehren der Mathematischen Wissenschaften, Vol. 317. Springer,
  New York, 1998.

\bibitem{SS1973}
R.~Saigal and C.~Simon.
\newblock Generic properties of the complementarity problem.
\newblock {\em Mathematical Programming}, 4(1):324--335, 1973.

\bibitem{ScholtesBook}
S.~Scholtes.
\newblock {\em Introduction to Piecewise Differentiable Equations}.
\newblock Springer-Verlag, New York, 2012.

\bibitem{SA1997}
A.~Shapiro.
\newblock First and second order analysis of nonlinear semidefinite programs.
\newblock {\em Mathematical Programming}, 77(1):301--320, 1997.

\bibitem{SR1979}
J.~E. Spingarn and R.~T. Rockafellar.
\newblock The generic nature of optimality conditions in nonlinear programming.
\newblock {\em Mathematics of Operations Research}, 4(4):425--430, 1979.

\bibitem{Dries1996}
L.~van~den Dries and C.~Miller.
\newblock Geometric categories and o-minimal structures.
\newblock {\em Duke Mathematical Journal}, 84:497--540, 1996.

\bibitem{Womersley1982}
R.~S. Womersley.
\newblock Optimality conditions for piecewise smooth functions.
\newblock In D.~C. Sorensen and R.~J.-B. Wets, editors, {\em Nondifferential
  and Variational Techniques in Optimization}, pages 13--27. Springer Berlin
  Heidelberg, Berlin, Heidelberg, 1982.

\end{thebibliography}

\end{document}